\newcommand{\Q}{\mathbb{Q}}
\newcommand{\A}{\mathbb{A}}
\providecommand{\C}{\mathbb{C}}
\renewcommand{\C}{\mathbb{C}}
\newcommand{\Z}{\mathbb{Z}}
\newcommand{\R}{\mathbb{R}}
\newcommand{\Gal}{\mathrm{Gal}}
\newtheorem{theo}{Theorem}[subsection]
\newtheorem{prop}[theo]{Proposition}
\newtheorem{rema}[theo]{Remark}
\newtheorem{exam}[theo]{Example}
\begin{document}

\baselineskip=16pt
\title{Arthur's multiplicity formula for certain inner forms of special orthogonal and symplectic groups}
\author{Olivier Taïbi \thanks{Imperial College London, United Kingdom. The author is supported by ERC Starting Grant 306326.}}

\date{}

\maketitle

\begin{abstract}
Let $\mathbf{G}$ be a special orthogonal group or an inner form of a symplectic group over a number field $F$ such that there exists a non-empty set $S$ of real places of $F$ at which $\mathbf{G}$ has discrete series and outside of which $\mathbf{G}$ is quasi-split.
We prove Arthur's multiplicity formula for automorphic representations of $\mathbf{G}$ having algebraic regular infinitesimal character at all places in $S$.
\end{abstract}

\tableofcontents
\newpage

\section{Introduction}

Let $F$ be a number field, and consider $\mathbf{G}$ a special orthogonal group or an inner form of a symplectic group over $F$.
Assume that there exists a \emph{non-empty} set $S$ of real places of $F$ such that
\begin{itemize}
\item for any place $v \in S$, the group $\mathbf{G}(F_v)$ admits discrete series representations,
\item for any place $v \not\in S$, the reductive group $\mathbf{G}_{F_v}$ is quasi-split.
\end{itemize}
This paper is devoted to the proof of Arthur's multiplicity formula for the subspace
$$L^2_{\mathrm{disc}}(\mathbf{G}(F) \backslash \mathbf{G}(\A_F))^{S-\mathrm{alg.\ reg.}}$$
of the discrete automorphic spectrum $L^2_{\mathrm{disc}}(\mathbf{G}(F) \backslash \mathbf{G}(\A_F)) \subset L^2(\mathbf{G}(F) \backslash \mathbf{G}(\A_F))$ corresponding to algebraic regular representations (``C-algebraic regular'' in the sense of \cite{BuzGee}) at all places in $S$, that is Theorem \ref{theo:main}, using the stabilisation of the trace formula for $\mathbf{G}$.
As in the case of quasi-split groups \cite[Theorem 1.5.2]{Arthur}, in the even orthogonal case we obtain a multiplicity formula only ``up to outer automorphism at all places of $F$''.

In particular we obtain the multiplicity formula for the full automorphic spectrum under the stronger assumption that $\mathbf{G}(\R \otimes_{\Q} F)$ is compact and $G$ is quasi-split at all finite places $F$.
The interest of this case is that automorphic forms for such a group $\mathbf{G}$ are more concrete than for quasi-split groups.
This allows one to compute explicitly and relatively easily with spaces of such automorphic forms.
Moreover, this assumption on $\mathbf{G}(\R \otimes_{\Q} F)$ allows for a much simpler theory of $p$-adic families of automorphic forms: see \cite{TheseG}, \cite{Loe}.
Several papers already rely on Theorem \ref{theo:main} for groups $\mathbf{G}$ satisfying this stronger assumption: \cite{Taideform} (and thus also \cite{CaraLeHung}), \cite{ChRe} (see Conjectures 3.26 and 3.30), and \cite{CheLan} (see Conjecture 4.25).

The multiplicity formula for arbitrary inner forms of symplectic and special orthogonal groups over number fields was announced by Arthur in \cite[Chapter 9]{Arthur}, but even formulating the result precisely was not possible at the time because of the absence of canonical absolute transfer factors as in the quasi-split case, which were only recently constructed by Kaletha \cite{Kal}, \cite{Kalglob}.
Furthermore, the general case will require the construction of local Arthur packets for non-quasi-split groups, as well as solving the problem, observed by Arthur, that some inner forms of even special orthogonal groups do not admit an outer automorphism defined over the base field.
In contrast, the above assumptions on $\mathbf{G}$ allow us to give a simple proof of the multiplicity formula, thanks to the following simplifications.
\begin{itemize}
\item If $\mathbf{G}$ is an even special orthogonal group, i.e.\ a \emph{pure} inner form of a quasi-split group rather than an arbitrary inner form, then the non-trivial outer automorphism of $\mathbf{G}$ has a representative defined over $F$.
In fact there is a natural $\mathbf{G}(F)$-orbit of such automorphisms.
\item At the finite places of $F$, we can use Arthur packets for quasi-split groups constructed in \cite{Arthur}.
\item The assumption on $\mathbf{G}$ at places in $S$ and the ``algebraic regular'' assumption imply that the only relevant Arthur-Langlands parameters at these places are those considered by Adams and Johnson in \cite{AdJo}, that is parameters bounded on the Weil group of $\R$ and having \emph{algebraic regular} infinitesimal character.
This simplifies significantly the spectral side in the stabilisation of the trace formula, since we do not have to consider contributions from \emph{non-discrete} parameters to the discrete part of the spectral side of the stabilisation of the trace formula as in the general case.
\end{itemize}
For applications, using arbitrary inner forms instead of quasi-split groups is constraining for at least two reasons.

The first reason is that at the non-quasi-split places it is in general difficult to describe precisely the local Arthur packets, or even to establish whether a given Arthur packet is non-empty, contrary to the quasi-split case where Arthur packets contain the corresponding Langlands packets by \cite[Proposition 7.4.1]{Arthur}.
This is not an issue in our case, as we will make the internal parametrisation of Adams-Johnson packets explicit using Shelstad's parametrisation of tempered L-packets (\cite{She1}, \cite{She2}, \cite{She3}) and Kaletha's refinement of Shelstad's formulation in \cite[§5.6]{Kal}.
We will be able to appeal to \cite{AdJo} thanks to recent work of Arancibia, Moeglin and Renard \cite{AMR} which proves that for Adams-Johnson parameters and quasi-split symplectic or special orthogonal groups, Arthur packets coincide with Adams-Johnson packets.
In particular these packets are non-empty.

The second reason that makes arbitrary inner forms less practical is that contrary to the quasi-split case it is not true that any \emph{tempered} Arthur-Langlands parameter gives rise to at least one automorphic representation, namely the generic one (with respect to a choice of Whittaker datum).
Again the fact that Arthur packets at the real places of $F$ are completely explicit can allow to circumvent this difficulty in situations where solvable base change is possible, as in \cite{Taideform}.

Since Mok \cite{Mokunitary} did for quasi-split unitary groups what Arthur did for quasi-split symplectic and special orthogonal groups, and Arancibia, Moeglin and Renard also treated the case of unitary groups in \cite{AMR}, the same method as in the present paper could be used to prove the analogue of Theorem \ref{theo:main} for inner forms of unitary groups for which there exists a non-empty set $S$ of real places of $F$ satisfying the above assumption.
In fact this case would be easier since there are no issues with outer automorphisms as in the even orthogonal case.
However, for \emph{pure} inner forms of unitary groups and \emph{tempered} parameters Kaletha, Minguez, Shin and White have already proved Arthur's multiplicity formula in \cite{KalMinShiWhi}, and they also announced a proof of the general case.
They used a slightly different formulation, using Kaletha's extended pure inner twists instead of Kaletha's rigid inner twists that we use in the present paper.
We refer the interested reader to \cite{Kalrivsbg} for a comparison of these notions.

Let us end this introduction with notations that we will use throughout the paper.
For $K$ a perfect field we will denote by $\mathrm{Gal}_K$ the absolute Galois group $\mathrm{Gal}(\overline{K} / K)$ of $K$.
If $K$ is a local or global field of characteristic zero, $W_K$ will denote its Weil group.
If $K$ is a local field of characteristic zero, $\mathrm{WD}_K$ will denote its Weil-Deligne group: $\mathrm{WD}_K = W_K$ if $K \simeq \R$ or $\C$, and $\mathrm{WD}_K = W_K \times \mathrm{SU}(2)$ if $K$ is $p$-adic.

\section{Review of Arthur's results}
\label{sec:reviewArthur}

In this section we briefly review Arthur's results from \cite{Arthur}, i.e.\ the construction of local and adélic Arthur packets for \emph{quasi-split} special orthogonal and symplectic groups.

\subsection{Quasi-split symplectic and special orthogonal groups and their Langlands dual groups}
\label{sec:defgroups}

Let $F$ be a global or local field of characteristic zero.
As in \cite{Arthur}, we will consider three families of quasi-split reductive groups over $F$:
\begin{itemize}
\item for $n \geq 0$, the split symplectic group $\mathbf{Sp}_{2n}$, defined as the stabilizer of a non-degenerate alternating form on a vector space of dimension $2n$ over $F$,
\item for $n \geq 1$, the split special orthogonal group $\mathbf{SO}_{2n+1}$, defined as follows.
Let $V$ be a vector space of dimension $2n+1$ over $F$, and let $q$ be a maximally split non-degenerate quadratic form $q$ on $V$: $(V,q)$ is the orthogonal direct sum of $n$ hyperbolic planes and a line.
Then $\mathbf{SO}_{2n+1}$ is the stabilizer of $q$ in the special linear group $\mathbf{SL}(V)$.
\item for $n \geq 1$ and $\alpha \in F^{\times} / F^{\times 2}$, the quasi-split special orthogonal group $\mathbf{SO}_{2n}^{\alpha}$, defined as follows.
Let $E = F[X] / (X^2-\alpha)$ and consider the quadratic form $q_{\alpha} : x \mapsto N_{E / F}(x)$ on $E$ seen as a two-dimensional vector space over $F$.
Note that $(E, q_{\alpha})$ is a hyperbolic plane if and only if $\alpha = 1$, i.e.\ $\alpha$ is a square.
Let $(V,q)$ be the orthogonal direct sum of $n-1$ hyperbolic planes and $(E, q_{\alpha})$.
Then $\mathbf{SO}_{2n}^{\alpha}$ is the stabilizer of $q$ in the special linear group $\mathbf{SL}(V)$.
\end{itemize}
In the third case we will also see $\alpha$ as a continuous character $\A_F^{\times} / F^{\times} \rightarrow \{ \pm 1 \}$ if $F$ is global ($\alpha : F^{\times} \rightarrow \{ \pm 1 \}$ if $F$ is local), using class field theory.

Denote by $\mathbf{G}$ one of the above groups.
As in \cite{BorelCorvallis} denote by ${}^L \mathbf{G}$ the Langlands dual group of $\mathbf{G}$ and $\widehat{\mathbf{G}}$ its identity component, so that
$$ \widehat{\mathbf{G}} = \begin{cases} \mathrm{SO}_{2n+1}(\C) & \text{if } \mathbf{G} = \mathbf{Sp}_{2n} \\ \mathrm{Sp}_{2n}(\C) & \text{if } \mathbf{G} = \mathbf{SO}_{2n+1} \\ \mathrm{SO}_{2n}(\C) & \text{if } \mathbf{G} = \mathbf{SO}_{2n}^{\alpha} \end{cases} $$
and ${}^L \mathbf{G} = \widehat{\mathbf{G}} \rtimes \Gal_F$.
The action of $\Gal_F$ on $\widehat{\mathbf{G}}$ is trivial except when $\mathbf{G} = \mathbf{SO}_{2n}^{\alpha}$ with $\alpha$ nontrivial, in which case the action of $\Gal_F$ factors through $\Gal(E / F) = \{ 1, \sigma \}$ and $\sigma$ acts by outer conjugation on $\widehat{\mathbf{G}}$.
To be completely explicit, one can identify $\widehat{\mathbf{G}} \rtimes \mathrm{Gal}(E / F)$ with $\mathrm{O}_{2n}(\C)$ as follows.
Let $B$ be the non-degenerate symmetric bilinear form on $\C v_1 \oplus \dots \oplus \C v_{2n}$ defined by $B(v_i, v_j) = \delta_{i, 2n+1-j}$, then $1 \rtimes \sigma$ is identified with the element of $\mathrm{O}_{2n}(\C)$ fixing $v_1, \dots, v_{n-1}, v_{n+2}, \dots , v_{2n}$ and exchanging $v_n$ and $v_{n+1}$.

Denote by $\mathrm{Std}_{\mathbf{G}}$ the standard representation $\widehat{\mathbf{G}} \rightarrow \mathrm{GL}_N(\C)$ where
$$ N = N(\widehat{\mathbf{G}}) = \begin{cases} 2n & \text{if } \mathbf{G}=\mathbf{SO}_{2n+1} \text{ or } \mathbf{SO}_{2n}^{\alpha}, \\ 2n+1 & \text{if } \mathbf{G}=\mathbf{Sp}_{2n}. \end{cases} $$
Note that we have included the trivial group $\mathbf{Sp}_0$ in the list above because the standard representation of its dual group has dimension $1$.
Extend $\mathrm{Std}_{\mathbf{G}}$ to ${}^L \mathbf{G}$ as follows.
\begin{itemize}
\item If $\mathbf{G}$ is symplectic, odd orthogonal, or split even orthogonal (that is $\alpha=1$), let $\mathrm{Std}_{\mathbf{G}}$ be trivial on $\Gal_F$.
\item Otherwise $\mathbf{G} = \mathbf{SO}_{2n}^{\alpha}$ with $\alpha \neq 1$.
Let $\mathrm{Std}_{\mathbf{G}}$ be trivial on $\Gal_E$.
If $n=1$, up to conjugation by $\mathrm{Std}_{\mathbf{G}}(\widehat{\mathbf{G}})$ there is a unique choice for $\mathrm{Std}_{\mathbf{G}}(1 \rtimes \sigma)$.
If $n>1$, there are two possibilities for $\mathrm{Std}_{\mathbf{G}}(1 \rtimes \sigma)$, choose the one having $-1$ as an eigenvalue with multiplicity one (the other choice has $-1$ as an eigenvalue with multiplicity $N-1 = 2n-1$).
Via the above identification $\widehat{\mathbf{G}} \rtimes \mathrm{Gal}(E / F) \simeq \mathrm{O}_{2n}(\C)$, this choice for  $\mathrm{Std}_{\mathbf{G}}$ is simply the natural inclusion $\mathrm{O}_{2n}(\C) \subset \mathrm{GL}_{2n}(\C)$.
\end{itemize}

Finally, let $\mathrm{Aut}({}^L \mathbf{G})$ be the group of automorphisms of ${}^L \mathbf{G}$, and $\mathrm{Out}({}^L \mathbf{G}) = \mathrm{Aut}({}^L \mathbf{G}) / \widehat{\mathbf{G}}_{\mathrm{ad}}$.
Note that $\mathrm{Out}({}^L \mathbf{G})$ is trivial unless $\mathbf{G} = \mathbf{SO}_{2n}^{\alpha}$, in which case it is isomorphic to $\Z / 2 \Z$.
Even in this case, the $\mathrm{GL}_N(\C)$-conjugacy class of $\mathrm{Std}_{\mathbf{G}}$ is invariant under $\mathrm{Out}({}^L \mathbf{G})$.

\subsection{Self-dual automorphic cuspidal representations of $\mathbf{GL}_N$}

In this section we let $F$ be a number field.
Let $N \geq 1$, and consider an automorphic cuspidal representation $\pi$ of $\mathbf{GL}_N/F$ which is self-dual.
Arthur \cite[Theorem 1.4.1]{Arthur} associates a quasi-split special orthogonal or symplectic group $\mathbf{G}_{\pi}$ with $\pi$, which satisfies $N(\widehat{\mathbf{G}_{\pi}}) = N$.
The central character $\omega_{\pi}$ of $\pi$ has order 1 or 2, and is trivial if $\widehat{\mathbf{G}_{\pi}}$ is symplectic.
If $N$ is odd, $\pi \otimes \omega_{\pi}$ is also self-dual and has trivial central character, and $\mathbf{G} = \mathbf{Sp}_{N-1}$.
Choose $\mathrm{Std}_{\pi} : {}^L \mathbf{G}_{\pi} \rightarrow \mathrm{GL}_N(\C)$ extending the standard representation $\widehat{\mathbf{G}_{\pi}} \rightarrow \mathrm{GL}_N(\C)$:
\begin{itemize}
\item If $\widehat{\mathbf{G}_{\pi}}$ is odd orthogonal, $\mathrm{Std}_{\pi}$ is the twist of $\mathrm{Std}_{\mathbf{G_{\pi}}}$ (defined in the previous section) by the character $\mathrm{Gal}_F \rightarrow \{ \pm 1 \}$ corresponding to $\omega_{\pi}$ by class field theory.
\item Otherwise let $\mathrm{Std}_{\pi} = \mathrm{Std}_{\mathbf{G}}$.
\end{itemize}
We can now state the remaining part of \cite[Theorem 1.4.1 and 1.4.2]{Arthur}: for any place $v$ of $F$, the local Langlands parameter $\mathrm{WD}_{F_v} \rightarrow \mathrm{GL}_N(\C)$ of $\pi_v$ is conjugate to $\mathrm{Std}_{\pi} \circ \phi_v$ for some Langlands parameter $\phi_v : \mathrm{WD}_{F_v} \rightarrow {}^L \mathbf{G}_{\pi}$.
Moreover the $\mathrm{Aut} \left( {}^L \mathbf{G}_{\pi} \right)$-orbit of $\phi_v$ is determined by the Langlands parameter of $\pi_v$.

Define $\mathrm{sign}(\widehat{\mathbf{G}}) = +1$ (resp.\ $-1$) if $\widehat{\mathbf{G}}$ is orthogonal (resp.\ symplectic), and define $\mathrm{sign}(\pi) = \mathrm{sign}(\widehat{\mathbf{G}_{\pi}})$.

\subsection{Elliptic endoscopic data and embeddings of L-groups}
\label{sec:ellenddat}

In this section $F$ denotes a local or global field of characteristic zero, and $\mathbf{G}$ denotes one of the groups defined in section \ref{sec:defgroups}
Let us recall from \cite[§1.8]{WaldFormulaire} the isomorphism classes of elliptic endoscopic data for $\mathbf{G}$, and fix an embedding of L-groups in each case.
To avoid confusion we ought to be precise about the terminology that we will use, as there is a slight discrepancy between \cite{KS} and \cite{WaldFormulaire}.
We will use the definition of an endoscopic datum $(\mathbf{H}, \mathcal{H}, s, \xi)$ for $\mathbf{G}$ given in \cite[§2.1]{KS}.
In particular, $\mathcal{H}$ is a splittable extension of $W_F$ by $\widehat{\mathbf{H}}$ and $\xi : \mathcal{H} \rightarrow {}^L \mathbf{G}$ is an L-embedding.
Waldspurger also includes an L-embedding ${}^L \xi : {}^L \mathbf{H} \rightarrow {}^L \mathbf{G}$ which is the composition of $\xi : \mathcal{H} \rightarrow {}^L \mathbf{G}$ and an isomorphism ${}^L \mathbf{H} \rightarrow \mathcal{H}$.
As observed in \cite[§2.2]{KS}, in general such an isomorphism does not necessarily exist, a problem which is overcome by introducing a z-extension of $\mathbf{H}$.
Fortunately such an isomorphism does exist for any endoscopic datum of any group $\mathbf{G}$ as in section \ref{sec:defgroups}, and thus we need not consider such z-extensions.

Here is the list of all isomorphism classes of elliptic endoscopic data for the groups defined in section \ref{sec:defgroups}.
For simplicity we only give $\mathbf{H}$, and refer to \cite[§1.8]{WaldFormulaire} for more details.
We also give the group $\mathrm{Out}(\mathbf{H}, \mathcal{H}, s, \xi) = \mathrm{Aut}(\mathbf{H}, \mathcal{H}, s, \xi)/\widehat{\mathbf{H}}$ of outer automorphisms in each case.
\begin{itemize}
\item $\mathbf{G} = \mathbf{SO}_{2n+1}$: $\mathbf{H} = \mathbf{SO}_{2a+1} \times \mathbf{SO}_{2b+1}$ with $a+b=n$.
The pairs $(a,b)$ and $(b,a)$ define isomorphic endoscopic data.
The outer automorphism group of $(\mathbf{H}, \mathcal{H}, s, \xi)$ is trivial except when $a=b$, where there is a unique non-trivial outer automorphism, swapping the two factors of $\widehat{\mathbf{H}} = \mathrm{Sp}_{2a}(\C) \times \mathrm{Sp}_{2b}(\C)$.
\item $\mathbf{G} = \mathbf{Sp}_{2n}$: $\mathbf{H} = \mathbf{Sp}_{2a} \times \mathbf{SO}_{2b}^{\alpha}$ with $a+b=n$ and $(b,\alpha) \neq (1,1)$.
We impose that $\alpha=1$ if $b=0$.
There is a non-trivial outer automorphism if and only if $b>0$, in which case the unique non-trivial automorphism acts by outer conjugation on the factor $\mathrm{SO}_{2b}(\C)$ of $\widehat{\mathbf{H}}$.
\item $\mathbf{G} = \mathbf{SO}_{2n}^{\alpha}$: $\mathbf{H} = \mathbf{SO}_{2a}^{\beta} \times \mathbf{SO}_{2b}^{\gamma}$, with $a+b=n$, $\beta = 1$ if $a=0$, $\gamma=1$ if $b=0$, $\beta \gamma = \alpha$, and both $(a, \beta)$ and $(b, \gamma)$ are distinct from $(1,1)$.
The pairs $((a,\beta),(b,\gamma))$ and $((b,\gamma),(a,\beta))$ define isomorphic endoscopic data.
If $ab>0$ there is a non-trivial outer automorphism acting by simultaneous outer conjugation on the two factors of $\widehat{\mathbf{H}}$.
There are no other non-trivial outer automorphisms except when $a=b$ and $\alpha=1$, in which case there is one swapping the two factors of $\widehat{\mathbf{H}}$, and so $\mathrm{Out}(\mathbf{H}, \mathcal{H}, s, \xi) \simeq \Z / 2\Z \times \Z / 2\Z$ in this case.
\end{itemize}
As explained above, in each case Waldspurger also chooses a particular L-embedding ${}^L \xi : {}^L \mathbf{H} \rightarrow {}^L \mathbf{G}$.
This choice is somewhat arbitrary, as it could be twisted by a 1-cocycle $W_F \rightarrow Z(\widehat{\mathbf{H}})$, and all results requiring the theory of endoscopy (for example the endoscopic character relation \ref{eqn:endcharrelqs}) ought to be valid for any choice of embedding.
However, as in \cite{Arthur} it will be convenient to fix these embeddings, at least in the global setting, and we refer to \cite[§1.8]{WaldFormulaire} for their definition.
It is important to note that Waldspurger's choice of embedding ${}^L \xi$, a priori defined only for a particular representative $(\mathbf{H}, \mathcal{H}, s, \xi)$ in its isomorphism class of elliptic endoscopic data, has the benefit of being invariant under $\mathrm{Out}(\mathbf{H}, \mathcal{H}, s, \xi)$.
Consequently this gives a well-defined embedding ${}^L \xi'$ for any endoscopic datum $(\mathbf{H}', \mathcal{H}', s', \xi')$ isomorphic to $(\mathbf{H}, \mathcal{H}, s, \xi)$.

It is not difficult to check directly on the definition of transfer factors that the validity of statements such as \ref{eqn:endcharrelqs} does not depend on the choice of embedding.

Finally, let us determine the relationship between Waldspurger's choice of embeddings and the standard representations defined in \ref{sec:defgroups}, which are embeddings for twisted endoscopy for general linear groups.
Write $\mathbf{H} = \mathbf{H}_1 \times \mathbf{H}_2$.
If $\mathbf{G}$ is orthogonal or $\mathbf{G} = \mathbf{Sp}_{2n}$ and $\mathbf{H} = \mathbf{Sp}_{2a} \times \mathbf{SO}_{2b}^{\alpha}$ with $\alpha=1$, then $\mathrm{Std}_{\mathbf{G}} \circ {}^L \xi$ is conjugated to $\left(\mathrm{Std}_{\mathbf{H}_1} \oplus \mathrm{Std}_{\mathbf{H}_2}\right) \circ \iota$, where $\iota$ denotes the inclusion ${}^L \mathbf{H} \subset {}^L \mathbf{H}_1 \times {}^L \mathbf{H}_2$.
In the remaining case where $\mathbf{G} = \mathbf{Sp}_{2n}$ and $\mathbf{H} = \mathbf{Sp}_{2a} \times \mathbf{SO}_{2b}^{\alpha}$ with $\alpha \neq 1$, $\mathrm{Std}_{\mathbf{G}} \circ {}^L \xi$ is conjugated to $\left(\left( \alpha \otimes \mathrm{Std}_{\mathbf{Sp}_{2a}} \right) \oplus \mathrm{Std}_{\mathbf{SO}_{2b}^{\alpha}}\right) \circ \iota$, where we see $\alpha$ as a quadratic character of $W_F$.

\subsection{Arthur's substitute for global parameters}
\label{sec:subsALparam}

In this section we let $F$ be a number field.
In order to formulate his multiplicity formula \cite[Theorem 1.5.2]{Arthur}, Arthur circumvented the absence of the hypothetical Langlands group by introducing substitutes for Arthur-Langlands parameters for special orthogonal and symplectic groups.
Consider formal, unordered sums $\psi = \boxplus_i \pi_i [d_i]$ where $\pi_i$ is a self-dual automorphic cuspidal representation of $\mathbf{GL}_{N_i} / F$ and $d_i \geq 1$ is an integer representing the dimension of an irreducible algebraic representation of $\mathrm{SL}_2(\C)$.
Let $\mathbf{G}$ denote a quasi-split special orthogonal or symplectic group over $F$ as in section \ref{sec:defgroups}, and let $N = N(\widehat{\mathbf{G}})$.
Let $\widetilde{\Psi}_{\mathrm{disc}}(\mathbf{G})$ be the set of $\psi = \boxplus_i \pi_i [d_i]$ as above such that the pairs $(\pi_i, d_i)$ are distinct, $\sum_i N_i d_i = N$, for all $i$ $\mathrm{sign}(\pi_i)(-1)^{d_i-1} = \mathrm{sign}(\widehat{\mathbf{G}})$, and
\begin{itemize}
\item if $\mathbf{G}$ is symplectic, that is if $\widehat{\mathbf{G}}$ is odd orthogonal, $\prod_i \omega_{\pi_i}^{d_i} = 1$,
\item if $\mathbf{G} = \mathbf{SO}_{2n}^{\alpha}$, $\prod_i \omega_{\pi_i}^{d_i} = \alpha$.
\end{itemize}
Note that if $\mathbf{G}$ is odd orthogonal, that is if $\widehat{\mathbf{G}}$ is symplectic, the condition $\prod_i \omega_{\pi_i}^{d_i} = 1$ is automatically satisfied.

Following Arthur, for a formal sum $\psi = \boxplus_i \pi_i [d_i]$ such that the pairs $(\pi_i, d_i)$ are distinct we let $\mathcal{L}_{\psi}$ be the fibre product of $\left({}^L \mathbf{G}_{\pi_i} \rightarrow \mathrm{Gal}_F \right)_i$.
Recall that for each $i$ we have $\mathrm{Std}_{\pi_i} : {}^L \mathbf{G}_{\pi_i} \rightarrow \mathrm{GL}_{N_i}(\C)$, and so we can form the representation $\bigoplus_i \mathrm{Std}_{\pi_i} \otimes \nu_{d_i}$ of $\mathcal{L}_{\psi} \times \mathrm{SL}_2(\C)$ where $\nu_d$ is the unique irreducible algebraic representation of $\mathrm{SL}_2(\C)$ in dimension $d$.
Then $\psi \in \widetilde{\Psi}_{\mathrm{disc}}(\mathbf{G})$ if and only there exists $\dot{\psi} : \mathcal{L}_{\psi} \times \mathrm{SL}_2(\C) \rightarrow {}^L \mathbf{G}$ such that $\mathrm{Std}_{\mathbf{G}} \circ \dot{\psi}$ is conjugated to $\bigoplus_i \mathrm{Std}_{\pi_i} \otimes \nu_{d_i}$.
Moreover in this case $\dot{\psi}$ is unique up to conjugation by $\widehat{\mathbf{G}}$ except if $\mathbf{G}$ is even orthogonal and all $N_i d_i$ are even, in which case there are exactly two conjugacy classes of such $\dot{\psi}$, exchanged by the group $\mathrm{Aut} \left( {}^L \mathbf{G} \right) / \widehat{\mathbf{G}}$ which has two elements.
For $\psi \in \widetilde{\Psi}_{\mathrm{disc}}(\mathbf{G})$ let $m_{\psi}$ be the number of $\widehat{\mathbf{G}}$-conjugacy classes of such $\dot{\psi}$, so that $m_{\psi} \in \{ 1,2 \}$.
Finally, denote by $\Psi_{\mathrm{disc}}(\mathbf{G})$ the set of such pairs $(\psi, \dot{\psi})$.
To simplify the notation we will abusively denote by $\dot{\psi}$ such an element of $\Psi_{\mathrm{disc}}(\mathbf{G})$.

Let us recall some definitions from \cite[§10.2]{KottSTFcusp}.
For $\dot{\psi} \in \Psi_{\mathrm{disc}}(\mathbf{G})$ we let $C_{\dot{\psi}} = \mathrm{Cent}(\dot{\psi}, \widehat{\mathbf{G}}) \supset Z(\widehat{\mathbf{G}})^{\mathrm{Gal}_F}$.
It is easy to check that $C_{\dot{\psi}}$ is a finite $2$-group.
We also let $S_{\dot{\psi}}$ be the group of $g \in \widehat{G}$ such that $g \dot{\psi}(x) g^{-1} \dot{\psi}(x)^{-1} \in Z(\widehat{\mathbf{G}})$ for all $x \in \mathcal{L}_{\psi} \times \mathrm{SL}_2(\C)$ and the resulting 1-cocycle $W_F \rightarrow Z(\widehat{\mathbf{G}})$ is locally trivial.
In particular, $S_{\dot{\psi}}$ contains $Z(\widehat{\mathbf{G}})$.
For the groups $\mathbf{G}$ considered in this paper the action of $\mathrm{Gal}_F$ on $Z(\widehat{\mathbf{G}})$ factors through a cyclic extension, thus any locally trivial 1-cocycle $W_F \rightarrow Z(\widehat{\mathbf{G}})$ is trivial, and so $S_{\dot{\psi}} = C_{\dot{\psi}} Z(\widehat{\mathbf{G}})$.
In fact the action of $\mathrm{Gal}_F$ on $Z(\widehat{\mathbf{G}})$ is trivial except in the uninteresting case $\mathbf{G} = \mathbf{SO}_2^{\alpha}$ with $\alpha \neq 1$, so that $S_{\dot{\psi}} = C_{\dot{\psi}}$ except in this case.
Finally, let $\mathcal{S}_{\dot{\psi}} = S_{\dot{\psi}} / Z(\widehat{\mathbf{G}})$.
By the above remark, we also have $\mathcal{S}_{\dot{\psi}} = C_{\dot{\psi}} / Z(\widehat{\mathbf{G}})^{\mathrm{Gal}_F}$.
The three groups $C_{\dot{\psi}}$, $S_{\dot{\psi}}$ and $\mathcal{S}_{\dot{\psi}}$ can be canonically described using only $\psi$, in fact using only the family $(N_i d_i \mod 2)_i$ if $\mathbf{G}$ is fixed, and so to lighten our notations we will simply denote them by $C_{\psi}$, $S_{\psi}$ and $\mathcal{S}_{\psi}$ in the sequel.

Let $(\mathbf{H}, \mathcal{H}, s, \xi)$ be an elliptic endoscopic datum for $\mathbf{G}$.
Recall that in section \ref{sec:ellenddat} we fixed an L-isomorphism ${}^L \mathbf{H} \simeq \mathcal{H}$, and thus an embedding ${}^L \xi : {}^L \mathbf{H} \rightarrow {}^L \mathbf{G}$.
Recall also that $\mathbf{H} = \mathbf{H}_1 \times \mathbf{H}_2$ where $\mathbf{H}_1$ and $\mathbf{H}_2$ are also quasi-split special orthogonal or symplectic groups.
Define $\widetilde{\Psi}_{\mathrm{disc}}(\mathbf{H}) = \widetilde{\Psi}_{\mathrm{disc}}(\mathbf{H}_1) \times \widetilde{\Psi}_{\mathrm{disc}}(\mathbf{H}_2)$ and similarly for $\Psi_{\mathrm{disc}}(\mathbf{H})$.
For $\psi' = (\psi'_1, \psi'_2) = (\boxplus_{i \in I_1} \pi_i[d_i], \boxplus_{i \in I_2} \pi_i[d_i]) \in \widetilde{\Psi}_{\mathrm{disc}}(\mathbf{H})$ it is natural to define
$$ {}^L \xi(\psi') = \begin{cases} \boxplus_{i \in I_1 \sqcup I_2} \pi_i[d_i] & \text{if } \mathbf{G} = \mathbf{SO}_{2n+1} \text{ or } \mathbf{SO}_{2n}^{\alpha}, \\ \left( \boxplus_{i \in I_1} (\alpha \otimes \pi_i)[d_i] \right) \boxplus \left( \boxplus_{i \in I_2} \pi_i[d_i] \right) & \text{if } \mathbf{G} = \mathbf{Sp}_{2n},\ \mathbf{H}_1 = \mathbf{Sp}_{2a} \text{ and } \mathbf{H}_2 = \mathbf{SO}_{2b}^{\alpha}. \end{cases} $$
Clearly ${}^L \xi(\psi') \in \widetilde{\Psi}_{\mathrm{disc}}(\mathbf{G})$ if and only if the pairs $(\pi_i, d_i)_{i \in I_1 \sqcup I_2}$ are distinct.
Define $\widetilde{\Psi}_{\mathbf{G}-\mathrm{disc}}(\mathbf{H}) \subset \widetilde{\Psi}_{\mathrm{disc}}(\mathbf{H})$ as the subset of elements satisfying this condition, and define $\Psi_{\mathbf{G}-\mathrm{disc}}(\mathbf{H})$ similarly.
For $\psi' = (\psi'_1, \psi'_2) \in \widetilde{\Psi}_{\mathbf{G}-\mathrm{disc}}(\mathbf{H})$ we can identify the fiber product of $\mathcal{L}_{\psi'_1} \rightarrow \mathrm{Gal}_F$ and $\mathcal{L}_{\psi'_2} \rightarrow \mathrm{Gal}_F$ with $\mathcal{L}_{{}^L \xi(\psi')}$.
Note that in the case $\mathbf{G} = \mathbf{Sp}_{2n}$, this identification comes from identifying $\mathbf{G}_{\pi_i}$ and $\mathbf{G}_{\alpha \otimes \pi_i}$ for $i \in I_1$.
Thus we can define, for $\dot{\psi}' \in \Psi_{\mathbf{G}-\mathrm{disc}}(\mathbf{H})$ above $\psi'$, its image ${}^L \xi(\dot{\psi}') \in \Psi_{\mathrm{disc}}(\mathbf{G})$ above ${}^L \xi(\psi')$.
Moreover $s \in S_{{}^L \xi(\dot{\psi}')}$.

Conversely, for $\dot{\psi} \in \Psi_{\mathrm{disc}}(\mathbf{G})$, any $s \in S_{\psi}$ defines an elliptic endoscopic datum $(\mathbf{H}, \mathcal{H}, s, \xi)$ where $\xi(\widehat{\mathbf{H}}) = \mathrm{Cent}(s, \widehat{\mathbf{G}})^0$ and $\xi(\mathcal{H}) = \xi(\widehat{\mathbf{H}}) \dot{\psi}(\mathcal{L}_{\psi})$.
The isomorphism class of $(\mathbf{H}, \mathcal{H}, s, \xi)$ clearly only depends on the image of $s$ in $\mathcal{S}_{\psi}$, and there is a unique $\dot{\psi}' \in \Psi_{\mathbf{G}-\mathrm{disc}}(\mathbf{H})$ such that $\dot{\psi} = {}^L \xi(\dot{\psi}')$.
We obtain the following proposition.
\begin{prop}[Arthur] \label{prop:endobij}
The mappings $\dot{\psi}' \mapsto {}^L \xi(\dot{\psi}')$ induce a bijection between isomorphism classes of pairs $((\mathbf{H}, \mathcal{H}, s, \xi), \dot{\psi}')$ where $(\mathbf{H}, \mathcal{H}, s, \xi)$ is an elliptic endoscopic datum for $\mathbf{G}$ and $\dot{\psi}' \in \Psi_{\mathbf{G}-\mathrm{disc}}(\mathbf{H})$, and $\widehat{\mathbf{G}}$-conjugacy classes of pairs $(\dot{\psi}, s)$ where $\dot{\psi} \in \Psi_{\mathrm{disc}}(\mathbf{G})$ and $s \in \mathcal{S}_{\psi}$.
\end{prop}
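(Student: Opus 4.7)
The plan is to verify that the two constructions already sketched in the paragraphs preceding the proposition — sending $(\dot{\psi}',(\mathbf{H},\mathcal{H},s,\xi))$ to ${}^L\xi\circ\dot{\psi}'$, and sending $(\dot{\psi},s)$ to the pair consisting of the endoscopic datum cut out by the centralizer of $s$ together with the unique $\dot{\psi}'$ above — are well-defined on the stated equivalence classes and are mutually inverse.

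\textbf{Forward direction.} Given a pair $((\mathbf{H},\mathcal{H},s,\xi),\dot{\psi}')$ with $\dot{\psi}' \in \Psi_{\mathbf{G}-\mathrm{disc}}(\mathbf{H})$, I would set $\dot{\psi} := {}^L\xi\circ\dot{\psi}'$ and check that $\dot{\psi}\in\Psi_{\mathrm{disc}}(\mathbf{G})$. This is exactly the content of the explicit formula for ${}^L\xi(\psi')$ at the level of formal sums, combined with the defining disjointness condition of $\Psi_{\mathbf{G}-\mathrm{disc}}(\mathbf{H})$ and the explicit description of $\mathrm{Std}_{\mathbf{G}}\circ{}^L\xi$ given at the end of section \ref{sec:ellenddat}. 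The fact that $s\in S_{\psi}$ is immediate from the definition of an endoscopic datum, since by definition $s$ commutes with $\xi(\mathcal{H})\supset\xi(\dot{\psi}'(\mathcal{L}_{\psi'}\times\mathrm{SL}_2(\C)))$ up to $Z(\widehat{\mathbf{G}})$. An isomorphism of such pairs is implemented by an element $g\in\widehat{\mathbf{G}}$ with $g\,\xi(\mathcal{H})\,g^{-1}=\xi'(\mathcal{H}')$ and $gsg^{-1}\equiv s'\pmod{Z(\widehat{\mathbf{G}})}$ together with a $\widehat{\mathbf{H}}$-conjugation absorbing the remaining ambiguity in $\dot{\psi}'$; this descends precisely to $\widehat{\mathbf{G}}$-conjugation of $(\dot{\psi},s)$ on the target.

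\textbf{Backward direction.} Conversely, given $(\dot{\psi},s)$ with $s\in S_{\psi}$, I would set $\widehat{\mathbf{H}}:=\mathrm{Cent}(s,\widehat{\mathbf{G}})^0$ and $\xi(\mathcal{H}):=\widehat{\mathbf{H}}\cdot\dot{\psi}(\mathcal{L}_{\psi}\times\mathrm{SL}_2(\C))$. The key structural input is the classical-group fact that the connected centralizer of a semisimple element in $\widehat{\mathbf{G}}$ decomposes, via the $(\pm 1)$-eigenspace decomposition on the standard representation, as a product of two classical groups of exactly the shape listed in section \ref{sec:ellenddat}; ellipticity is automatic from the finiteness of $C_{\dot{\psi}}$. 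Using the fixed L-isomorphism $\mathcal{H}\simeq{}^L\mathbf{H}$ from section \ref{sec:ellenddat} to identify the extension, one recovers a unique $\dot{\psi}'\in\Psi_{\mathbf{G}-\mathrm{disc}}(\mathbf{H})$ with $\dot{\psi}={}^L\xi\circ\dot{\psi}'$ by reading off the generalized eigenspace decomposition and inverting the formula giving ${}^L\xi(\psi')$. The dependence of $(\mathbf{H},\mathcal{H},s,\xi)$ only on the image of $s$ in $\mathcal{S}_{\psi}$ is clear since translating $s$ by $z\in Z(\widehat{\mathbf{G}})$ does not move its centralizer.

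\textbf{Main obstacle.} The genuinely delicate point is the even-orthogonal bookkeeping: when $\mathbf{G}=\mathbf{SO}_{2n}^{\alpha}$, or when $\mathbf{H}$ has an even-orthogonal factor, one has to reconcile $\widehat{\mathbf{G}}$-conjugacy with $\mathrm{Aut}({}^L\mathbf{G})$-conjugacy, and to match the outer automorphism group $\mathrm{Out}(\mathbf{H},\mathcal{H},s,\xi)$ with the fibre of size $m_{\psi'}$ in $\Psi_{\mathbf{G}-\mathrm{disc}}(\mathbf{H})$ above a given formal parameter. Here one has to use that Waldspurger's L-embedding ${}^L\xi$ is invariant under $\mathrm{Out}(\mathbf{H},\mathcal{H},s,\xi)$, so that switching $\dot{\psi}'$ by the outer automorphism produces a $\widehat{\mathbf{G}}$-conjugate $\dot{\psi}$ (and vice versa), and symmetrically for the $\mathbf{Sp}_{2a}\times\mathbf{SO}_{2b}^{\alpha}$ data in $\mathbf{Sp}_{2n}$ where the twist by $\alpha$ appears. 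Everything else reduces to straightforward verification from the definitions of section \ref{sec:subsALparam} and the list in section \ref{sec:ellenddat}.
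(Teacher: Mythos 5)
Your proposal is correct and follows the same route as the paper: the paper does not give a separate proof of Proposition~\ref{prop:endobij} but derives it from the two-paragraph discussion immediately preceding it (the explicit formula for ${}^L\xi(\psi')$ and the observation that any $s\in S_{\psi}$ cuts out an elliptic endoscopic datum with $\xi(\widehat{\mathbf{H}})=\mathrm{Cent}(s,\widehat{\mathbf{G}})^0$ and a unique $\dot{\psi}'$), which is exactly what you reconstruct. You also correctly flag the even-orthogonal/$\mathrm{Out}$-invariance bookkeeping as the point requiring care, which the paper addresses through Waldspurger's $\mathrm{Out}(\mathfrak{e})$-invariant choice of ${}^L\xi$ in section~\ref{sec:ellenddat}.
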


\begin{rema}
The statement of the Proposition is particularly simple thanks to the fact that $\mathbf{G}$ and $\mathbf{H}$ satisfy the Hasse principle (see \cite[§4]{KottSTFcusp}).
In general the ``right'' notion to consider is not that of a parameter $\dot{\psi}$ up to $\widehat{\mathbf{G}}$-conjugacy, but also up to locally trivial elements of $Z^1(W_F, Z(\widehat{\mathbf{G}}))$, and similarly for $\mathbf{H}$.
Then an equivalence class of $\dot{\psi}'$ can correspond to more than one equivalence class of $(\dot{\psi}, s)$.
We refer the interested reader to \cite[§11]{KottSTFcusp}, particularly Proposition 11.2.1 for the general case.
\end{rema}

\subsection{Local Arthur packets in the quasi-split case}
\label{sec:Apackqs}

In this section we let $F$ be a local field of characteristic zero, and $\mathbf{G}$ one of the reductive groups over $F$ defined in section \ref{sec:defgroups}.
If $F$ is $p$-adic, let $\mathcal{H}(\mathbf{G})$ be the Hecke algebra of smooth compactly supported distributions on $\mathbf{G}$ with complex coefficients.
If $F$ is Archimedean, fix a maximal compact subgroup $K$ of $\mathbf{G}(F)$ and let $\mathcal{H}(\mathbf{G})$ be the algebra of bi-$K$-finite smooth compactly supported distributions on $\mathbf{G}(F)$ with complex coefficients.

Contrary to the global case, we now have genuine Arthur-Langlands parameters at our disposal, i.e.\ continuous morphisms $\psi : \mathrm{WD}_F \times \mathrm{SL}_2(\C) \rightarrow {}^L \mathbf{G}$ such that
\begin{itemize}
\item the composition with ${}^L \mathbf{G} \rightarrow W_F$ is the obvious projection, 
\item for any $w \in \mathrm{WD}_F$, $\psi(w)$ is semisimple,
\item the restriction of $\psi$ to the factor $\mathrm{SL}_2(\C)$ is algebraic.
\end{itemize}
We let $\Psi(\mathbf{G})$ be the set of Arthur-Langlands parameters for $\mathbf{G}$ such that $\psi(\mathrm{WD}_F)$ is bounded.

To formulate Arthur's construction of packets associated with such parameters precisely, it is necessary to fix a Whittaker datum $\mathfrak{w}$ for $\mathbf{G}$.
If $F$ is $p$-adic and $\mathbf{G}$ is unramified, there is a unique $\mathbf{G}(F)$-conjugacy class of hyperspecial maximal compact subgroups compatible with $\mathfrak{w}$ in the sense of \cite{CasSha}.
In this case ``unramified representation of $\mathbf{G}(F)$'' will mean unramified with respect to such a subgroup.

Arthur's construction relies on the local Langlands correspondence for general linear groups, via the standard representation $\mathrm{Std}_{\mathbf{G}}$ of ${}^L \mathbf{G}$.
For this reason, the following fact is important: the $\mathrm{Aut}({}^L \mathbf{G})$-orbit of an Arthur-Langlands parameter $\psi$ is determined by the conjugacy class of $\mathrm{Std}_{\mathbf{G}} \circ \psi$.
Recall that for $\mathbf{G} = \mathbf{SO}_{2n}^{\alpha}$ the group $\mathrm{Out}({}^L \mathbf{G})$ has two elements, and so Arthur's methods using the twisted trace formula for general linear groups cannot distinguish between packets associated with $\widehat{\mathbf{G}}$-conjugacy classes of parameters in the same $\mathrm{Out}({}^L \mathbf{G})$-orbit.
If $\mathbf{G} = \mathbf{SO}_{2n}^{\alpha}$, there exists an outer automorphism $\theta$ of $\mathbf{G}$ which preserves $\mathfrak{w}$: in fact $\theta$ can be realised as an element of the corresponding orthogonal group of determinant $-1$, as one can easily check for $\mathbf{SO}_4^{\alpha}$, and the general case follows.
If $F$ is Archimedean, the maximal compact subgroup $K$ of $\mathbf{G}(F)$ can be chosen to be stable under $\theta$.
Following Arthur, let $\widetilde{\mathcal{H}}(\mathbf{G})$ be the subalgebra of $\mathcal{H}(\mathbf{G})$ consisting of $\theta$-invariant distributions, so that irreducible representations of $\widetilde{\mathcal{H}}(\mathbf{G})$ correspond to $\{1, \theta \}$-orbits of irreducible representations.
Note that it is not really necessary to consider $\theta$-invariant distributions, distributions whose orbital integrals are $\theta$-invariant suffice: if $\pi$ is an admissible representation of $\mathbf{G}(F)$ and $f(g)dg \in \mathcal{H}(\mathbf{G})$ has $\theta$-invariant orbital integrals, then $\mathrm{tr} \left(\pi(f(g)dg)\right) = \mathrm{tr} \left(\pi^{\theta}(f(g)dg)\right)$.
If $\mathbf{G}$ is symplectic or odd-orthogonal, we simply let $\widetilde{\mathcal{H}}(\mathbf{G}) = \mathcal{H}(\mathbf{G})$ and $\theta=1$.

As in the global case, denote by $C_{\psi}$ the centraliser of $\psi$ in $\widehat{\mathbf{G}}$, $S_{\psi} = Z(\widehat{\mathbf{G}}) C_{\psi}$ and
$$\mathcal{S}_{\psi} = \pi_0\left( C_{\psi} / Z(\widehat{\mathbf{G}})^{\mathrm{Gal}_F} \right) = \pi_0\left( S_{\psi} / Z(\widehat{\mathbf{G}}) \right) $$
which is an abelian 2-group.
Note that the image $s_{\psi}$ of $-1 \in \mathrm{SL}_2(\C)$ by $\psi$ belongs to $C_{\psi}$.

Arthur \cite[Theorem 1.5.1]{Arthur} associates with any $\psi \in \Psi(\mathbf{G})$ a multi-set $\Pi_{\psi}$ of $\{1, \theta \}$-orbits of irreducible unitary representations $\pi$ of $\mathbf{G}(F)$, along with a map $\Pi_{\psi} \rightarrow \mathcal{S}_{\psi}^{\vee}$, denoted by $\pi \mapsto \langle \cdot, \pi \rangle$.
Arthur proves that the linear form
\begin{equation} \label{eqn:defstable}
\Lambda_{\psi} : f(g)dg \mapsto \sum_{\pi \in \Pi_{\psi}} \langle s_{\psi}, \pi \rangle \mathrm{tr} \left( \pi(f(g)dg)\right)
\end{equation} 
is \emph{stable}, i.e.\ vanishes if all stable orbital integrals of $f(g)dg \in \widetilde{\mathcal{H}}(\mathbf{G})$ vanish.
These Arthur packets are characterised by \cite[Theorem 2.2.1]{Arthur}.
Note that our notation $\Lambda_{\psi}(f(g)dg)$ differs from Arthur's $f^{\mathbf{G}}(\psi)$.

Let us recall the endoscopic character relations \cite[Theorem 2.2.1(b)]{Arthur}.
Any semisimple $s \in S_{\psi}$ determines an endoscopic datum $(\mathbf{H}, \mathcal{H}, s, \xi)$ for $\mathbf{G}$.
For any choice of L-embedding ${}^L \xi : {}^L \mathbf{H} \rightarrow {}^L \mathbf{G}$, there is a unique Arthur-Langlands parameter $\psi'$ for $\mathbf{H}$ such that $\psi = {}^L \xi \circ \psi'$.
Note that $\mathbf{H}$ is a product of general linear groups and quasi-split special orthogonal or symplectic groups, and so $\psi'$ can be seen as a collection of Arthur-Langlands parameters for these groups.
The endoscopic datum $(\mathbf{H}, \mathcal{H}, s, \xi)$, the L-embedding ${}^L \xi$ and the Whittaker datum $\mathfrak{w}$ allow to define \emph{transfer factors} \cite[§5.3]{KS}, and thus to define the notion of distributions on $\mathbf{G}(F)$ and $\mathbf{H}(F)$ having matching orbital integrals (\cite[§5.5]{KS}, also called \emph{transfer}).
Any $f(g)dg \in \widetilde{\mathcal{H}}(\mathbf{G})$ admits a transfer $f'(h)dh \in \widetilde{\mathcal{H}}(\mathbf{H})$, and
\begin{equation} \label{eqn:endcharrelqs}
\sum_{\pi \in \Pi_{\psi}} \langle s_{\psi} s, \pi \rangle \mathrm{tr} \left( \pi(f(g)dg)\right) = \Lambda_{\psi'}(f'(h)dh) = \sum_{\pi' \in \Pi_{\psi'}} \langle s_{\psi'}, \pi' \rangle \mathrm{tr} \left( \pi'(f'(h)dh)\right).
\end{equation}

In a global setting it is also necessary to consider Arthur-Langlands parameters which are not bounded on $\mathrm{WD}_F$, because the generalised Ramanujan conjecture is not known to hold in general.
In this case the packet $\Pi_{\psi}$ is simply defined by parabolic induction from the bounded case, so that its elements are not a priori irreducible.
We refer to \cite[§1.5]{Arthur} for details.

\subsection{Localisation of global parameters}
\label{sec:localisationpsi}

Finally, let us explain the relation between the previous two sections.
Let $F$ be a number field, and consider $(\psi, \dot{\psi}) \in \Psi_{\mathrm{disc}}(\mathbf{G})$.
Write $\psi$ as $\boxplus_i \pi_i[d_i]$.
Let $v$ be a place of $F$.
For any $i$ the Langlands parameter of the representation $\pi_{i,v}$ of $\mathbf{GL}_{N_i}(F_v)$ factors through $\mathrm{Std}_{\pi_i} : {}^L \mathbf{G}_{\pi_i} \rightarrow \mathrm{GL}_{N_i}(\C)$.
We obtain a Langlands parameter $\mathrm{WD}_{F_v} \rightarrow {}^L \mathbf{G}_{\pi}$, well-defined up to the action of $\mathrm{Aut}({}^L \mathbf{G}_{\pi_i})$.
Composing with $\dot{\psi}$ we get an Arthur-Langlands parameter $\dot{\psi}_v : \mathrm{WD}_{F_v} \times \mathrm{SL}_2(\C) \rightarrow {}^L \mathbf{G}$.
Moreover we have natural morphisms $C_{\dot{\psi}} \rightarrow C_{\dot{\psi}_v}$, $S_{\dot{\psi}} \rightarrow S_{\dot{\psi}_v}$ and $\mathcal{S}_{\dot{\psi}} \rightarrow \mathcal{S}_{\dot{\psi}_v}$.

\section{Certain rigid inner forms and Arthur packets}

\subsection{Certain rigid inner forms}

Let $F$ be a totally real number field.
From now on we use $\mathbf{G}^*$ to denote a quasi-split special orthogonal or symplectic group over $F$ as in section \ref{sec:defgroups}, as we will use $\mathbf{G}$ to denote inner forms of $\mathbf{G}^*$.
We shall be interested in certain inner twists of $\mathbf{G}^*$, i.e.\ pairs $(\mathbf{G}, \Xi)$ where $\mathbf{G}$ is a reductive group over $F$ and $\Xi : \mathbf{G}^*_{\overline{F}} \rightarrow \mathbf{G}_{\overline{F}}$ is an isomorphism such that for any $\sigma \in \Gal_F$, the automorphism $\Xi^{-1} \sigma(\Xi)$ of $\mathbf{G}^*_{\overline{F}}$ is inner.
For $\sigma \in \Gal_F$, denote by $z_{\sigma}$ the element of $\mathbf{G}^*_{\mathrm{ad}}(\overline{F})$ such that $\Xi^{-1} \sigma(\Xi) = \mathrm{Ad}(z_{\sigma})$.
This defines a cocyle $z_{\cdot}$ in $Z^1(F, \mathbf{G}^*_{\mathrm{ad}})$, and the cohomology set $H^1(F, \mathbf{G}^*_{\mathrm{ad}})$ classifies the isomorphism classes of inner twists of $\mathbf{G}^*$.
Assume that $F$ is not totally complex and let $S$ be a non-empty set of real places of $F$.
We say that $(\mathbf{G}, S)$ satisfies property $\star$ if
\begin{itemize}
\item for any place $v \in S$, the group $\mathbf{G}(F_v)$ admits discrete series,
\item for any place $v \not\in S$, the reductive group $\mathbf{G}_{F_v}$ is quasi-split.
\end{itemize}
We will classify such pairs, and realise them as \emph{rigid inner twists} (see \cite{Kal}, \cite{Kalglob}).
In fact in the even orthogonal case we will restrict to \emph{pure inner forms}, for reasons explained in Remark \ref{rema:onlypureevenorth}. 

\subsubsection{Rigid inner forms at real places}
\label{sec:innerformsreal}

In this section we only consider a fixed real place $v$ of $F$, and to simplify notation we denote without subscript the base change of $\mathbf{G}^*$ to $F_v = \R$, and write $\mathrm{Gal}(\C / \R) = \{ 1, \tau \}$.
We are interested in inner forms $\mathbf{G}$ of $\mathbf{G}^*$.
It is well-known that $\mathbf{G}(\R)$ admits discrete series if and only if $\mathbf{G}^*(\R)$ admits discrete series, and this condition is also equivalent to $\mathbf{G}^*$ admitting an anisotropic maximal torus, or to $\mathbf{G}^*(\R)$ admitting an anisotropic inner form.
Moreover there is at most one element of $H^1(\R, \mathbf{G}^*_{\mathrm{ad}})$ giving rise to an anisotropic inner form.

Below we describe in which cases $\mathbf{G}^*(\R)$ admits discrete series and recall the classification of its inner forms, restricting to \emph{pure} inner forms in the even orthogonal case.
In the orthogonal cases we will make these inner forms explicit by realising them as a \emph{pure} inner forms.
More precisely we will choose a pair $(\mathbf{B}^*, \mathbf{T}^*)$ where $\mathbf{T}^*$ is an anisotropic maximal torus of $\mathbf{G}^*$ and $\mathbf{B}^* \supset \mathbf{T}^*_{\C}$ is a Borel subgroup of $\mathbf{G}^*_{\C}$ such that all simple roots for $(\mathbf{B}^*, \mathbf{T}^*)$ are non-compact, and explicitly compute the surjective map $H^1(\R, \mathbf{T}^*) \rightarrow H^1(\R, \mathbf{G}^*)$.
This will be useful in section \ref{sec:Apacketsreal} to compute the internal parametrisation of tempered discrete packets and Adams-Johnson packets for inner forms of $\mathbf{G}^*$.
In the symplectic case the non-trivial inner forms of $\mathbf{G}^*$ cannot be realised as pure inner forms, which is why the notion of rigid inner twist \cite{Kal} is needed.
We denote by $u_{\R} \rightarrow \mathcal{E}_{\R}$ the extension of $\mathrm{Gal}(\C / \R)$ that was denoted by $u \rightarrow W$ in \cite{Kal}, to avoid confusion with the Weil group.
We will fix a pair $(\mathbf{B}^*, \mathbf{T}^*)$ as in the orthogonal case and compute the surjective mappings $$H^1(u_{\R} \rightarrow \mathcal{E}_{\R}, \mathbf{Z}(\mathbf{G}^*) \rightarrow \mathbf{T}^*) \rightarrow H^1(u_{\R} \rightarrow \mathcal{E}_{\R}, \mathbf{Z}(\mathbf{G}^*) \rightarrow \mathbf{G}^*) \rightarrow H^1(\R, \mathbf{G}^*_{\mathrm{ad}}). $$
\begin{itemize}
\item Case $\mathbf{G}^* = \mathbf{SO}_{2n+1} = \mathbf{SO}(V,q)$.
The group $\mathbf{G}^*$ is split and adjoint.
The cohomology set $H^1(\R, \mathbf{G}^*)$ parametrises quadratic spaces having same dimension and discriminant as $(V,q)$.
Let $\mathbf{T}^*$ be an anisotropic maximal torus of $\mathbf{G}^*$.
It corresponds to an orthogonal decomposition $(V,q) = (D,q_0) \oplus \bigoplus_{j=1}^n (P_j,q_j)$ where $D$ is a line and each $(P_j,q_j)$ is an anisotropic plane: $\mathbf{T}^*$ is the identity component of the stabilizer of this decomposition and $\mathbf{T}^* \simeq \prod_j \mathbf{SO}(P_j, q_j)$.
Assume that $q_0$ is positive, then there are $\lceil n/2 \rceil$ negative (resp.\ $\lfloor n/2 \rfloor$ positive) planes among the $P_j$'s.
Thus up to reordering we can assume that $(-1)^{n+1-j} q_j$ is positive for any $j$.
For any $j$, fix a basis $e_j$ of the free $\Z$-module $X^*(\mathbf{SO}(P_j, q_j))$ of rank one.
There is a unique Borel subgroup $\mathbf{B}^*$ of $\mathbf{G}^*_{\C}$ containing $\mathbf{T}^*_{\C}$ such that the set of simple roots of $\mathbf{T}^*_{\C}$ in $\mathbf{B}^*$ is $\{ e_1-e_2, \dots, e_{n-1}-e_n, e_n \}$.
This determines an identification $\widehat{\mathbf{T}^*} \simeq \mathcal{T}$ where $\mathcal{T}$ is a maximal torus of $\widehat{\mathbf{G}^*} \simeq \mathrm{Sp}_{2n}(\C)$, part of a pair $(\mathcal{B}, \mathcal{T})$ such that the set of simple roots of $\mathcal{T}$ in $\mathcal{B}$ is $\{ e_1^*-e_2^*, \dots, e_{n-1}^*-e_n^*, 2e_n^* \}$ where $e_j^*(e_k) = \delta_{j,k}$.
The group $H^1(\R, \mathbf{T}^*)$ is in perfect duality with $\widehat{\mathbf{T}^*}^{\mathrm{Gal}(\C / \R)}$ which is identified with the subgroup of $2$-torsion in $\mathcal{T}$ because $\tau$ acts by inversion on $\widehat{\mathbf{T}^*}$.
It image of $(\epsilon_j)_{1 \leq j \leq n} \in \{ \pm 1 \}^n \simeq H^1(\R, \mathbf{T}^*)$ in $H^1(\R, \mathbf{G}^*)$ corresponds to the isomorphism class of inner twists $(\mathbf{G}, \Xi)$ where $\mathbf{G}$ is a special orthogonal group of signature
$$(a,b) = \left(1 + 2 \mathrm{card}\left\{ j\,|\, \epsilon_j = (-1)^{n+1-j} \right\}, 2 \mathrm{card}\left\{ j\,|\, \epsilon_j = (-1)^{n-j} \right\}\right).$$
In particular we see that the character of $Z(\widehat{\mathbf{G}^*})^{\mathrm{Gal}(\C / \R)} = \{ \pm 1 \}$ that \cite[Theorem 1.2]{KottEllSing} associates to the image of $(\epsilon_j)_{1 \leq j \leq n}$ in $H^1(\R, \mathbf{G}^*)$ is $-1 \mapsto \prod_{1 \leq j \leq n} \epsilon_j = (-1)^{n(n-1)/2 + (a-1)/2}$.

There is a unique element of $H^1(\R, \mathbf{T}^*)$ giving rise to the anisotropic inner form of $\mathbf{G}^*$, namely $((-1)^{n+1-j})_{1 \leq j \leq n}$.
The associated character of $Z(\widehat{\mathbf{G}^*})^{\mathrm{Gal}(\C / \R)}$ is $-1 \mapsto (-1)^{n(n+1)/2}$.
\item Case $\mathbf{G}^* = \mathbf{SO}_{2n}^{\alpha} = \mathbf{SO}(V,q)$, where $\alpha \in \R^{\times} / \R_{>0} \simeq \{ \pm 1 \}$ is equal to $(-1)^n \mathrm{disc}(q)$.
The group $\mathbf{G}^*$ admits discrete series if and only if $(-1)^n \alpha > 0$.
Assume that this is the case.
Let $\mathbf{T}^*$ be an anisotropic maximal torus of $\mathbf{G}^*$.
It corresponds to an orthogonal decomposition $(V,q) = \bigoplus_{j=1}^n (P_j,q_j)$ where each $(P_j,q_j)$ is an anisotropic plane: $\mathbf{T}^*$ is the identity component of the stabilizer of this decomposition and $\mathbf{T}^* \simeq \prod_j \mathbf{SO}(P_j, q_j)$.
There are $\lceil n/2 \rceil$ positive (resp.\ $\lfloor n/2 \rfloor$ negative) planes among the $(V_j, q_j)$'s.
Up to reordering we can assume that $(-1)^{j-1} q_j$ is positive definite for all $j$.
As in the previous case fix bases $e_j$ of $X^*(\mathbf{SO}(P_j, q_j))$, and let $\mathbf{B}^*$ be the Borel subgroup corresponding to the set of simple roots $\{ e_1-e_2, \dots, e_{n-1}-e_n, e_{n-1} + e_n \}$, and the set of simple roots for the pair $(\mathcal{B}, \mathcal{T})$ in $\widehat{\mathbf{G}^*} \simeq \mathrm{SO}_{2n}(\C)$ is $\{ e_1^*-e_2^*, \dots, e_{n-1}^*-e_n^*, e_{n-1}^* + e_n^* \}$.
As before $H^1(\R, \mathbf{T}^*) \simeq \{ \pm 1 \}^n$ is in perfect duality with the subgroup of $2$-torsion in $\mathcal{T}$.
The set $H^1(\R, \mathbf{G}^*)$ parametrises the isomorphism classes of quadratic forms on dimension $2n$ vector spaces over $\R$ having positive discriminant.
The image of $(\epsilon_j)_{1 \leq j \leq n} \in \{ \pm 1 \}^n \simeq H^1(\R, \mathbf{T}^*)$ in $H^1(\R, \mathbf{G}^*)$ corresponds to the signature
$$(a,b) = \left(2 \mathrm{card}\left\{ j\,|\, \epsilon_j = (-1)^{j-1} \right\}, 2 \mathrm{card}\left\{ j\,|\, \epsilon_j = (-1)^j \right\}\right) $$
and the associated character of $Z(\widehat{\mathbf{G}^*})^{\mathrm{Gal}(\C / \R)} = \{ \pm 1 \}$ is
$$-1 \mapsto \prod_{1 \leq j \leq n} \epsilon_j = (-1)^{n(n+1)/2 + a/2} .$$
The fibres of the mapping $H^1(\R, \mathbf{G}^*) \rightarrow H^1(\R, \mathbf{G}^*_{\mathrm{ad}})$ are given by the signatures $(a,b)$ and $(b,a)$.

There are two elements of $H^1(\R, \mathbf{T}^*)$ giving rise to anisotropic inner forms of $\mathbf{G}^*$, namely $((-1)^{j-1})_{1 \leq j \leq n}$ and $((-1)^j)_{1 \leq j \leq n}$.
They map to distinct classes in $H^1(\R, \mathbf{G}^*)$, and the associated characters of $Z(\widehat{\mathbf{G}^*})^{\mathrm{Gal}(\C / \R)}$ are $-1 \mapsto (-1)^{n(n-1)/2}$ and $-1 \mapsto (-1)^{n(n+1)/2}$.
They map to the same class in $H^1(\R, \mathbf{G}^*_{\mathrm{ad}})$.

\item Case $\mathbf{G}^* = \mathbf{Sp}_{2n} = \mathbf{Sp}(V, a)$ where $V$ is a $2n$-dimensional vector space over $\R$ and $a$ is a non-degenerate alternating form.
In this case $\mathbf{G}^*$ always admits discrete series.
We have that $H^1(\R, \mathbf{G}^*) = 1$ but for $n>0$ the set $H^1(\R, \mathbf{G}^*_{\mathrm{ad}})$ is non-trivial.
First we describe the anisotropic inner form as a rigid inner form.
Choose $J \in \mathbf{G}^*(\R)$ such that $J^2 = -1$ and for any $v \in V \smallsetminus \{0\}$, $a(Jv, v)>0$.
Choose $i \in \C$ such that $i^2=-1$.
These define a complex structure on $V$, along with a positive definite hermitian form
$$ h(v_1, v_2) := a(Jv_1, v_2) + i a(v_1, v_2). $$
Choose a decomposition $(V,h) = \bigoplus_{j=1}^n (P_j, h_j)$ where each $P_j$ is a complex line.
Then the stabiliser $\mathbf{T}^*$ of this decomposition is an anisotropic maximal torus of $\mathbf{G}^*$.
We have a canonical isomorphism $f_j : \mathbf{U}(P_j, h_j) \simeq \mathbf{U}_1$ for each $j$.
Fix a basis $b$ of $X^*(\mathbf{U}_1)$, and let $e_j = (-1)^{j-1} b \circ f_j$, a basis of $X^*(\mathbf{U}(P_j, h_j))$.
Let $\mathbf{B}^* \supset \mathbf{T}^*_{\C}$ be the Borel subgroup of $\mathbf{G}^*_{\C}$ having as set of simple roots $\{ e_1 - e_2, \dots, e_{n-1}-e_n, 2e_n \}$.
The basis $(e_1, \dots, e_n)$ identifies $X_*(\mathbf{T}^*)$ with $\Z^n$.
Let $\mathbf{Z} \simeq \mathbf{\mu}_2$ be the center of $\mathbf{G}^*$, and let $\overline{\mathbf{T}}^* = \mathbf{T}^* / \mathbf{Z}$, so that $X_*(\overline{\mathbf{T}}^*) = \Z^n + \Z (\frac{1}{2}, \dots, \frac{1}{2})$.
There are two elements of $H^1(u_{\R} \rightarrow \mathcal{E}_{\R}, \mathbf{Z} \rightarrow \mathbf{T}^*) \simeq X_*(\overline{\mathbf{T}}^*) / 2 X_*(\mathbf{T}^*)$ (\cite[Theorem 4.8]{Kal}) giving rise to the compact inner form of $\mathbf{G}^*$: $z_1 = \mathrm{cl}\left(\frac{1}{2}, -\frac{1}{2}, \dots, \frac{(-1)^{n-1}}{2}\right)$ and $z_2 = \mathrm{cl}\left(-\frac{1}{2}, \frac{1}{2}, \dots, \frac{(-1)^n}{2}\right)$.
Note that $X_*(\overline{\mathbf{T}}^*) / 2 X_*(\mathbf{T}^*)$ is naturally isomorphic to the group of characters of the preimage of $\widehat{\mathbf{T}^*}^{\mathrm{Gal}(\C / \R)}$ in $\widehat{\overline{\mathbf{T}}^*}$.
The elements $z_1, z_2$ map to the same element in $H^1(\R, \overline{\mathbf{T}}^*) \simeq X_*(\overline{\mathbf{T}}^*) / 2X_*(\overline{\mathbf{T}}^*)$, namely the one represented by $\tau \mapsto J$.
Let $(\mathbf{G}, \Xi)$ be a corresponding inner twist of $\mathbf{G}^*$.
The positive definite quadratic form $v \mapsto a(Jv, v)$ on $V$ yields a positive definite hermitian form $h'$ on $\C \otimes_{\R} V$ seen as a complex vector space using the tensor product (not $J$!).
It is easy to check that $\mathbf{G}$ is a Zariski-closed subgroup of the anisotropic unitary group $\mathbf{U}(\C \otimes_{\R} V, h')$, and thus $\mathbf{G}$ is the anisotropic inner form of $\mathbf{G}^*$.
In fact we can give $\C \otimes_{\R} V$ a right $\mathbb{H}$-vector space structure by letting $j \in \mathbb{H}$ act by $\lambda \otimes v \mapsto \bar{\lambda} \otimes J(v)$, and $\mathbf{G}$ is the intersection of $\mathbf{U}(\C \otimes_{\R} V, h')$ with the real algebraic group of automorphisms of this $\mathbb{H}$-vector space.

Recall \cite[§3.4]{Kal} the quotient $H^1_{\mathrm{sc}}(u_{\R} \rightarrow \mathcal{E}_{\R}, \mathbf{Z} \rightarrow \mathbf{G}^*)$ of the pointed set $H^1(u_{\R} \rightarrow \mathcal{E}_{\R}, \mathbf{Z} \rightarrow \mathbf{G}^*)$.
This quotient has a natural structure of abelian group.
By \cite[Theorem 4.11 and Proposition 5.3]{Kal} (which generalise \cite[Theorem 1.2]{KottEllSing}), $H^1_{\mathrm{sc}}(u_{\R} \rightarrow \mathcal{E}_{\R}, \mathbf{Z} \rightarrow \mathbf{G}^*) \simeq Z(\widehat{\mathbf{G}^*}_{\mathrm{sc}})^{\vee} \simeq \Z / 2 \Z$, and the image of $z_i$ is the non-trivial element, for any $i$.
For any $i$, the fibre of this non-trivial element can be identified with $\psi^{-1}\left( H^1(\R, \mathbf{G}) \right) z_i$, and thus
$$ H^1(u_{\R} \rightarrow \mathcal{E}_{\R}, \mathbf{Z} \rightarrow \mathbf{G}^*) = \{1\} \bigsqcup \psi^{-1}\left( H^1(\R, \mathbf{G}) \right) z_i. $$
The pointed set $H^1(\R, \mathbf{G})$ can be computed using \cite[Chapitre III, Théorème 6]{SerreGalCo}, and it is in natural bijection with the set of conjugacy classes of elements $y$ of order $1$ or $2$ in $\mathbf{G}(\R)$ (equivalently, $\mathbf{G}(\C)$).
Mapping $y$ to $\dim \ker (y + \mathrm{Id}) / 2$ (in the standard representations of $\mathbf{G}(\C)$) allows to identify $H^1(\R, \mathbf{G})$ with the integer interval $\{ 0, \dots, n \}$.
The trivial element of $H^1(\R, \mathbf{G})$ corresponds to $0$ and the element $x$ such that $\psi^{-1}(x) z_i = z_{3-i}$ corresponds to $n$.
The elements of $\psi^{-1} \left(H^1(\R, \mathbf{G}) \right) z_i$ corresponding to integers $k$ and $k'$ map to the same class in $H^1(\R, \mathbf{G}^*_{\mathrm{ad}})$ if and only if $k' \in \{k, n-k\}$.
The trivial class in $H^1(u_{\R} \rightarrow \mathcal{E}_{\R}, \mathbf{Z} \rightarrow \mathbf{G}^*)$ is the only one mapping to the trivial class in $H^1(\R, \mathbf{G}^*_{\mathrm{ad}})$.
In particular $H^1(\R, \mathbf{G}^*)$ has $\lfloor n/2 \rfloor + 2$ elements.
\end{itemize}

\begin{rema} \label{rema:onlypureevenorth}
The reader might wonder why we have not considered rigid inner forms instead of only pure inner forms in the even orthogonal case.
For $n \geq 2$ denote $\mathbf{G}^* = \mathbf{SO}_{2n}^{(-1)^n}$.
It is not difficult to compute that the complement of $H^1(\R, \mathbf{G}^*)$ in $H^1(u_{\R} \rightarrow \mathcal{E}_{\R}, \mathbf{Z}(\mathbf{G}^*) \rightarrow \mathbf{G}^*)$ has two elements.
They map to the same element in $H^1(\R, \mathbf{G}^*_{\mathrm{ad}})$ if and only if $n$ is odd.
These two elements are swapped by the outer automorphism of $\mathbf{G}^*$.
For this reason we cannot formulate Proposition \ref{prop:realAJArthur} for inner forms of even special orthogonal groups which are not pure inner forms, and thus we cannot formulate Theorem \ref{theo:main} either.
Note that for $n$ is even, any non-pure inner form of $\mathbf{SO}_{2n}^1$ does not admit any outer automorphism defined over $\R$. 
\end{rema}

\subsubsection{Global rigid inner forms}
\label{sec:gri}

We can now classify which groups among the quasi-split groups $\mathbf{G}^*$ over a number field $F$ defined in section \ref{sec:defgroups} admit inner forms $\mathbf{G}$ satisfying property $\star$ with respect to a given non-empty set $S$ of real places of $F$.
As explained in the previous remark, we restrict ourselves to \emph{pure} inner forms in the even orthogonal case.
Since odd special orthogonal groups are adjoint, we only have to consider pure inner forms when $\mathbf{G}^*$ is orthogonal.
In the symplectic case we will realise all inner forms $\mathbf{G}$ of $\mathbf{G}^*$ satisfying property $\star$ with respect to $S$ as \emph{rigid inner forms}.
We refer to \cite{Kal} (resp.\ \cite{Kalglob}) for the construction of the extension $u_v \rightarrow \mathcal{E}_v$ of $\mathrm{Gal}_{F_v}$ (resp.\ $P_{F, \dot{V}} \rightarrow \mathcal{E}_{F, \dot{V}}$ of $\mathrm{Gal}_F$) by a profinite algebraic group, and the cohomology sets for reductive groups defined using these extensions.
Note that as in the previous section we denote by $u_v \rightarrow \mathcal{E}_v$ the extension that is denoted by $u_v \rightarrow W_v$ in \cite{Kal}, to avoid confusion with the Weil group, and that we suppressed the choice of $\tilde{\xi}$ in the notation $\mathcal{E}_{F, \dot{V}}$.
Let $\mathbf{Z}$ denote the center of $\mathbf{G}^*$.

Of course up to removing places from $S$ we only have to consider groups $\mathbf{G}^*$ that are not quasi-split at all places in $S$.

\begin{prop} \label{prop:gri}
Let $F$ be a totally real number field, $S$ a non-empty set of real places of $F$, and $n \geq 1$.
\begin{enumerate}
\item $\mathbf{G}^* = \mathbf{SO}_{2n+1} = \mathbf{SO}(V,q)$ where $q$ has positive discriminant at all places of $S$.
For each place $v \in S$ choose $z_v \in H^1(F_v, \mathbf{G}^*)$, corresponding to a signature $(1+2a_v, 2b_v)$ where $a_v+b_v=n$, with associated character of $Z(\widehat{\mathbf{G}^*})^{\mathrm{Gal}_{F_v}} \simeq \{ \pm 1 \}$:
$$-1 \mapsto \epsilon_v = (-1)^{n(n-1)/2+a_v}. $$
Then $\mathbf{G}^*$ admits an inner form satisfying property $\star$ with respect to $S$ if and only if $\prod_{v \in S} \epsilon_v = 1$.
In this case it is unique.
\item $\mathbf{G}^* = \mathbf{SO}_{2n}^{\alpha}$ where $\alpha \in F^{\times} / F^{\times 2}$ and $\mathbf{G}^* = \mathbf{SO}(V, q)$ where $q$ is defined in section \ref{sec:defgroups}.
Assume that $(-1)^n \alpha$ is positive at all places in $S$.
For each $v \in S$ choose $z_v \in H^1(F_v, \mathbf{G}^*)$, corresponding to the signature $(2a_v, 2 b_v)$ where $a_v+b_v=n$ and with associated character of $Z(\widehat{\mathbf{G}^*})^{\mathrm{Gal}_{F_v}} \simeq \{ \pm 1 \}$:
$$-1 \mapsto \epsilon_v = (-1)^{n(n+1)/2+a_v}. $$
\begin{enumerate}
\item Assume that $n$ is odd.
Then $\mathbf{G}^*$ has a unique inner form $\mathbf{G}$ satisfying property $\star$ with respect to $S$ and isomorphic to the inner form given by the image of $z_v$ in $H^1(F_v, \mathbf{G}^*_{\mathrm{ad}})$ at all $v \in S$.

Let $V$ be a finite set of real or finite places of $F$ disjoint from $S$ and such that for any $v \in V$, $\alpha \not\in F_v^{\times 2}$, and moreover $(-1)^{\mathrm{card} (V)} = \prod_{v \in S} \epsilon_v$.
For any $v \in V$ let $z_v \in H^1(F_v, \mathbf{G}^*)$ be the unique non-trivial class mapping to the trivial class in $H^1(F_v, \mathbf{G}^*)$.
Explicitly, $z_v$ corresponds to the signature $(n-1, n+1)$ if $v$ is real, and $z_v$ is the unique non-trivial element of $H^1(F_v, \mathbf{G}^*)$ if $v$ is finite.
Then there is a unique $z \in H^1(F, \mathbf{G}^*)$ such that for any $v \in S \cup V$ the image of $z$ in $H^1(F_v, \mathbf{G}^*)$ equals $z_v$, and for any $v \not\in S \cup V$ the image of $z$ in $H^1(F_v, \mathbf{G}^*)$ is trivial.
The corresponding pure inner form of $\mathbf{G}^*$ is isomorphic to $\mathbf{G}$ as an inner form, and this describes  all the pure inner forms of $\mathbf{G}^*$ satisfying this property.
\item Assume that $n$ is even.
Then $\mathbf{G}^*$ has an inner form $\mathbf{G}$ satisfying property $\star$ with respect to $S$ and isomorphic to the image of $z_v$ in $H^1(F_v, \mathbf{G}^*_{\mathrm{ad}})$ at all places $v \in S$ if and only if $\prod_{v \in S} \epsilon_v = 1$ or $\alpha \neq 1$.
If it exists then it is unique up to isomorphism.

Assume that $\prod_{v \in S} \epsilon_v = 1$ or $\alpha \neq 1$.
Let $V$ be a finite set of real or finite places of $F$ disjoint from $S$ and such that for any $v \in V$, $\alpha \not\in F_v^{\times 2}$, and moreover $(-1)^{\mathrm{card} (V)} = \prod_{v \in S} \epsilon_v$.
Such a set exists and $V = \emptyset$ is the only possible choice if $\alpha = 1$.
For any $v \in V$ let $z_v \in H^1(F_v, \mathbf{G}^*)$ be the unique non-trivial class mapping to the trivial class in $H^1(F_v, \mathbf{G}^*)$.
Explicitly, $z_v$ corresponds to the signature $(n-1, n+1)$ if $v$ is real, and $z_v$ is the unique non-trivial element of $H^1(F_v, \mathbf{G}^*)$ if $v$ is finite.
Then there is a unique $z \in H^1(F, \mathbf{G}^*)$ such that for any $v \in S \cup V$ the image of $z$ in $H^1(F_v, \mathbf{G}^*)$ equals $z_v$, and for any $v \not\in S \cup V$ the image of $z$ in $H^1(F_v, \mathbf{G}^*)$ is trivial.
The corresponding pure inner form of $\mathbf{G}^*$ is isomorphic to $\mathbf{G}$ as an inner form, and this describes  all the pure inner forms of $\mathbf{G}^*$ satisfying this property.
\end{enumerate}
\item $\mathbf{G}^* = \mathbf{Sp}_{2n}$.
For each $v \in S$ choose $z_v \in H^1(F_v, \mathbf{G}^*_{\mathrm{ad}}) \smallsetminus \{ 1 \}$.
Then $\mathbf{G}^*$ has an inner form satisfying property $\star$ with respect to $S$ and which is isomorphic to the image of $z_v$ at each place $v \in S$ if and only if $\mathrm{card}(S)$ is even.
If it exists then it is unique up to isomorphism.

Let $\mathbf{Z}$ be the center of $\mathbf{G}^*$.
For each $v \in S$ choose $\tilde{z}_v \in H^1(u_v \rightarrow \mathcal{E}_v, \mathbf{Z} \rightarrow \mathbf{G}^*) \smallsetminus \{ 1 \}$ lifting $z_v$.
Up to isomorphism there is a unique global rigid inner form which is isomorphic to $z_v$ at any $v \in S$ and split at all places of $F$ not in $S$.
\end{enumerate}
\end{prop}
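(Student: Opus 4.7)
The plan is to apply in each case a local-to-global cohomological exact sequence, combined with the local computations of section \ref{sec:innerformsreal} and the observation that at any place $v \notin S$ the group $\mathbf{G}_{F_v}$ must be quasi-split, hence split in the orthogonal cases, so the local invariant in $H^1(F_v, \mathbf{G}^*)$ (or in the rigid inner form cohomology) is forced to be trivial. In each case the proof reduces to matching a Kottwitz or Kaletha-type character obstruction with the explicit characters of $Z(\widehat{\mathbf{G}^*})^{\Gal_{F_v}}$ attached to the local classes in section \ref{sec:innerformsreal}.

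For the odd orthogonal case, $\mathbf{G}^* = \mathbf{SO}_{2n+1}$ is adjoint, so isomorphism classes of inner forms are classified by $H^1(F, \mathbf{G}^*)$. Kottwitz's global duality theorem \cite{KottEllSing} identifies the image of $H^1(F, \mathbf{G}^*) \to \bigoplus_v H^1(F_v, \mathbf{G}^*)$ with the kernel of the sum of the characters of $Z(\widehat{\mathbf{G}^*}_{\mathrm{sc}})^{\Gal_F} \simeq \{\pm 1\}$ attached to the local classes. By section \ref{sec:innerformsreal} this character at a place $v \in S$ is $\epsilon_v$, while at any other place the local class is trivial and contributes trivially. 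This produces both the condition $\prod_{v \in S} \epsilon_v = 1$ and, when it is satisfied, the existence and uniqueness of the desired $\mathbf{G}$.

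For the even orthogonal case, the same Kottwitz-type sequence applies to $\mathbf{G}^*$, which classifies pure inner forms. One seeks a global class $z \in H^1(F, \mathbf{G}^*)$ whose localisation at each $v \in S$ maps to the prescribed class in $H^1(F_v, \mathbf{G}^*_{\mathrm{ad}})$ and is trivial elsewhere. At a place $v$ where $\alpha \in F_v^{\times 2}$ the map $H^1(F_v, \mathbf{G}^*) \to H^1(F_v, \mathbf{G}^*_{\mathrm{ad}})$ is injective on the relevant fibre, so the value of $z_v$ is forced, whereas at a place where $\alpha \not\in F_v^{\times 2}$ one can replace the trivial class by the unique non-trivial class mapping to the identity of $H^1(F_v, \mathbf{G}^*_{\mathrm{ad}})$; this is precisely the role of the set $V$. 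The Kottwitz obstruction is again a sign in $\Z / 2 \Z$, which by the local character formulae of section \ref{sec:innerformsreal} becomes $\prod_{v \in S} \epsilon_v \cdot (-1)^{\mathrm{card}(V)} = 1$. Such a $V$ exists with $V = \emptyset$ when $\prod \epsilon_v = 1$, and with an odd number of auxiliary places exactly when $\alpha \neq 1$; this is the condition in the proposition. For $n$ odd, the hypothesis $(-1)^n \alpha > 0$ at every $v \in S$ forces $\alpha$ to be negative, hence non-square, at every $v \in S$, so $\alpha \neq 1$ holds automatically and no parity condition remains.

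For the symplectic case we use Kaletha's global duality theorem for rigid inner forms \cite{Kalglob}: the localisation map
$$ H^1(P_{F, \dot{V}} \to \mathcal{E}_{F, \dot{V}}, \mathbf{Z} \to \mathbf{G}^*) \to \bigoplus_v H^1(u_v \to \mathcal{E}_v, \mathbf{Z} \to \mathbf{G}^*) $$
is surjective onto the kernel of a natural sum-of-characters map into $\Z / 2 \Z$ (coming from $Z(\widehat{\mathbf{G}^*}_{\mathrm{sc}}) = \{\pm 1\}$). By section \ref{sec:innerformsreal} each non-trivial lift $\tilde z_v$ for $v \in S$ pairs non-trivially with this character, and each place outside $S$ can be assigned the trivial local class, yielding the parity condition $\mathrm{card}(S)$ even. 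Uniqueness of the underlying abstract inner form then follows because any two systems of lifts $(\tilde z_v)_{v \in S}$ of a fixed $(z_v)_{v \in S}$ differ by classes in $H^1(u_v \to \mathcal{E}_v, \mathbf{Z} \to \mathbf{Z})$, whose images in $H^1(F_v, \mathbf{G}^*_{\mathrm{ad}})$ are trivial. I expect the main obstacle to be the bookkeeping required to verify that the sign conventions in Kottwitz's and Kaletha's global duality theorems match those used to compute the local characters $\epsilon_v$ and to identify $Z(\widehat{\mathbf{G}^*}_{\mathrm{sc}})^{\Gal_{F_v}, \vee}$ in section \ref{sec:innerformsreal}.
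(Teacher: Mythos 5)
Your proposal follows essentially the same route as the paper's (deliberately terse) proof: Kottwitz's local--global duality for $H^1$ from \cite{KottEllSing}, and Kaletha's rigid analogue (Theorem 3.44 and Corollary 3.45 of \cite{Kalglob}) in the symplectic case, combined with the local character computations of section \ref{sec:innerformsreal}, with the Hasse principle supplying the uniqueness assertions. The only point worth making explicit is that the ``only if'' direction in the even orthogonal case concerns arbitrary inner forms rather than just pure ones, so the obstruction must also be read off from Kottwitz's sequence for $\mathbf{G}^*_{\mathrm{ad}}$, where it is the pullback of $\prod_{v \in S} \epsilon_v$ along the surjection $Z(\widehat{\mathbf{G}^*}_{\mathrm{sc}}) \rightarrow Z(\widehat{\mathbf{G}^*})$ and hence still nontrivial when $\prod_{v \in S} \epsilon_v = -1$; with that noted, your argument is correct.
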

\begin{proof}
Note that in all cases the groups $\mathbf{G}^*_{\mathrm{ad}}$ and $\mathbf{G}^*$ and all their inner forms satisfy the Hasse principle by \cite[§4]{KottSTFcusp}, since the action of $\mathrm{Gal}_F$ on the center of the dual group factors through a cyclic group.
This fact implies all claims of unicity appearing in the Proposition.

The first two cases of special orthogonal groups are simple exercises using the previous section, \cite{KottEllSing} and \cite[Theorem 6.6]{PlaRap}, as is the first part of the symplectic case.
For the last part of the third case we need Kaletha's generalisation of Kottwitz' results.
Assume that $\mathbf{G}^* = \mathbf{Sp}_{2n}$ and that $\mathrm{card}(S)$ is even.
Theorem 3.44 and Corollary 3.45 in \cite{Kalglob} describe the image of localisation $H^1(P_{F, \dot{V}} \rightarrow \mathcal{E}_{F, \dot{V}}, \mathbf{Z} \rightarrow \mathbf{G}^*) \rightarrow \prod_v H^1(u_v \rightarrow \mathcal{E}_v, \mathbf{Z} \rightarrow \mathbf{G}^*)$ and implies the existence of at least one relevant $z \in H^1(P_{F, \dot{V}} \rightarrow \mathcal{E}_{F, \dot{V}}, \mathbf{Z} \rightarrow \mathbf{G}^*)$.
By \cite[Lemma 3.22]{Kalglob} the image of $z$ by $H^1(P_{F, \dot{V}} \rightarrow \mathcal{E}_{F, \dot{V}}, \mathbf{Z} \rightarrow \mathbf{G}^*) \rightarrow \mathrm{Hom}(P_{\dot{V}}, \mathbf{Z})$ is imposed, and so two such relevant elements $z_1, z_2$ differ by an element of $H^1(F, \mathbf{G}_1) \simeq \prod_{v \in S} H^1(F_v, \mathbf{G}_1)$, where $\mathbf{G}_1$ is the inner form of $\mathbf{G}^*$ determined by $z_1$. 
The sets $H^1(F_v, \mathbf{G}_1)$, $H^1(F_v, \mathbf{G}_{1, \mathrm{ad}})$ and the mappings between them were made explicit in the previous section.
\end{proof}
Note that pure inner forms are special cases of rigid inner forms by letting $\mathbf{Z}=1$, thus all cases could be formulated using rigid inner forms.
It is not difficult to check that any inner form of an even special orthogonal group which is anisotropic at all Archimedean places of $F$ and quasi-split at all finite places of $F$ arises as a pure inner forms, i.e.\ is a special orthogonal group.
In the symplectic case the inner forms considered in the Proposition can be constructed explicitly as unitary groups over a quaternion algebra over $F$ which is non-split exactly at the places in $S$.

\begin{rema} \label{rema:probtriv}
We will use this realisation of an inner form $\mathbf{G}$ as a rigid inner form of $\mathbf{G}^*$ to get a coherent family of normalisations of the local Langlands correspondences for the groups $\mathbf{G}(F_v)$, following \cite{Kal}.
Here ``coherent'' can be intuitively understood as ``satisfying a product formula'', a condition that is necessary to formulate the main goal of this paper, i.e.\ Arthur's multiplicity formula for automorphic representations of $\mathbf{G}$ (Theorem \ref{theo:main}).
In the case $\mathbf{G}^* = \mathbf{SO}_{2n+1}$ (resp.\ $\mathbf{G}^* = \mathbf{Sp}_{2n}$) for any $z \in H^1(F, \mathbf{G}^*)$ (resp.\ $H^1(P_{F, \dot{V}} \rightarrow \mathcal{E}_{F, \dot{V}}, \mathbf{Z} \rightarrow \mathbf{G}^*)$) as in the Proposition and any place $v \not\in S$, the image of $z$ in $H^1(F_v, \mathbf{G}^*)$ (resp.\ $H^1(u_v \rightarrow \mathcal{E}_v, \mathbf{Z} \rightarrow \mathbf{G}^*)$) is trivial.
This is not always the case for $\mathbf{G}^* = \mathbf{SO}_{2n}^{\alpha}$.
In fact it can happen that no relevant $z \in H^1(F, \mathbf{G}^*)$ is locally trivial at all places of $F$ not in $S$, i.e.\ that $V$ as in the Proposition is necessarily non-empty: this occurs when $\prod_{v \in S} \epsilon_v = - 1$.
In these cases it is not possible to rely solely on Arthur's construction of packets reviewed in section \ref{sec:Apackqs}.
Fortunately Kaletha calculated the effect of twisting the local Langlands correspondence by an element of $H^1(F_v, \mathbf{Z}(\mathbf{G}))$, and we will review his results in section \ref{sec:trivit}.
\end{rema}

\subsection{Local Arthur packets for real reductive groups having discrete series}
\label{sec:Apacketsreal}

Let $\mathbf{G}^*$ be a quasi-split reductive group as in section \ref{sec:defgroups} over $\R$, and assume that $\mathbf{G}^*$ admits discrete series.
Let $\mathbf{G}$ be an inner form of $\mathbf{G}^*$.
Let $\mathbf{T}^*$ be an anisotropic maximal torus of $\mathbf{G}^*$, and let $\mathbf{Z}$ be the center of $\mathbf{G}^*$, except for $\mathbf{G}^* = \mathbf{SO}_2^{\alpha}$ (with $\alpha = -1$) which is a torus, in which case we let $\mathbf{Z}$ be any finite subgroup of $\mathbf{G}^*$.
As we saw in section \ref{sec:innerformsreal}, the inner form $\mathbf{G}$ of $\mathbf{G}^*$ can be realised as a rigid inner twist $(\mathbf{G}, \Xi, z)$ of $\mathbf{G}^*$, i.e.\ $\Xi : \mathbf{G}^*_{\C} \rightarrow \mathbf{G}_{\C}$ is an isomorphism and $z \in Z^1(u_{\R} \rightarrow \mathcal{E}_{\R}, \mathbf{Z} \rightarrow \mathbf{G}^*)$ is such that for any $w \in \mathcal{E}_{\R}$, $\Xi^{-1} w(\Xi) = \mathrm{Ad}(z(w))$.
In fact we realised $z$ as an element of $Z^1(u_{\R} \rightarrow \mathcal{E}_{\R}, \mathbf{Z} \rightarrow \mathbf{T}^*)$.
Note that for now we do not assume that $\mathbf{G}$ is a pure inner form in the even orthogonal case, but we will have to make this assumption again in section \ref{sec:AJArthur}.

According to \cite[§5.6]{Kal}, which makes precise certain constants in Shelstad's work (\cite{She1}, \cite{She2}, \cite{She3}), this realisation and the choice of a Whittaker datum for $\mathbf{G}^*$ are enough to normalise the local Langlands correspondence for $\mathbf{G}$, that is to determine an internal parametrisation of each L-packet $\Pi_{\varphi}(\mathbf{G})$ of representations of $\mathbf{G}(\R)$ corresponding to a tempered Langlands parameter $W_{\R} \rightarrow {}^L \mathbf{G}$, in a manner analogous to the quasi-split case reviewed in section \ref{sec:Apackqs}.
In the anisotropic case, the L-packet $\Pi_{\varphi}(\mathbf{G})$ is a singleton.

In the global setting we will also need to consider non-tempered Arthur-Langlands parameters, i.e.\ Arthur-Langlands parameters $W_{\R} \times \mathrm{SL}_2(\C) \rightarrow {}^L \mathbf{G}$ whose restriction to $\mathrm{SL}_2(\C)$ is non-trivial.
For the main result of this paper (Theorem \ref{theo:main}) we will restrict to Arthur-Langlands parameters having algebraic regular infinitesimal character, i.e.\ those considered by Adams and Johnson in \cite{AdJo}.
Adams and Johnson worked with arbitrary real reductive groups (not necessarily quasi-split) and proved endoscopic character relations similar to \ref{eqn:endcharrelqs}, but only up to a multiplicative constant.
We will have to check that Kaletha's definitions allow to remove this ambiguity (Proposition \ref{prop:AJrigid}), similarly to what we did in \cite[§4.2.2]{Taidimtrace} for the quasi-split case.
We will see that in the anisotropic case, these Adams-Johnson packets are singletons.

Recently Arancibia, Moeglin and Renard \cite{AMR} proved that the packets of representations of $\mathbf{G}^*(\R)$ constructed by Adams and Johnson coincide with those constructed by Arthur in \cite{Arthur}, compatibly with the internal parametrisations.
We will show (Proposition \ref{prop:realAJArthur}) that this implies endoscopic character relations between Adams-Johnson packets for $\mathbf{G}$ and Arthur packets reviewed in section \ref{sec:Apackqs} for endoscopic groups of $\mathbf{G}$.
The only difficulty comes from the fact that Arthur packets are sets of representations \emph{up to outer automorphism}.
For this reason in the case $\mathbf{G}^* = \mathbf{SO}_{2n}^{\alpha}$ we will restrict ourselves to \emph{pure} inner forms $\mathbf{G}$.
Of course this covers the case of anisotropic inner forms in which we are particularly interested.

\subsubsection{Tempered parameters}

We begin by recalling Shelstad and Kaletha's internal parametrisation of L-packets for rigid inner twists of quasi-split groups \cite[§5.6]{Kal}.
We adopt a formulation slightly differing from \cite{Kal} and closer to \cite[§4.2.1]{Taidimtrace}, to prepare for the non-tempered case.
We will be brief and focus on the aspects which were not examined in \cite[§4.2.1]{Taidimtrace}.
It is convenient to let the inner form $\mathbf{G}$ of $\mathbf{G}^*$ vary.
Note that since $\mathbf{G}$ and $\mathbf{G}^*$ are inner forms of each other, their Langlands dual groups are canonically identified, and we will only use the notation ${}^L \mathbf{G}$.
Recall that by definition (\cite{BorelCorvallis}) $\widehat{\mathbf{G}}$ comes with a pinning $(\mathcal{B}, \mathcal{T}, (\mathcal{X}_{\alpha})_{\alpha \in \Delta})$, used to define the action of $W_{\R}$ on $\widehat{\mathbf{G}}$ and ${}^L \mathbf{G} = \widehat{\mathbf{G}} \rtimes W_{\R}$.
The $\widehat{\mathbf{G}}$-conjugacy classes of \emph{discrete} Langlands parameters $\varphi_{\lambda} : W_{\R} \rightarrow {}^L \mathbf{G}^*$ are parametrised by dominant weights $\lambda$ for $\mathbf{G}$.
We can assume that $\varphi_{\lambda}$ is aligned with $(\mathcal{B}, \mathcal{T})$ in the sense that $\varphi_{\lambda}(z) = (2 \tau)(z/|z|) \rtimes z$ for $z \in W_{\C} = \C^{\times}$, where $\tau = \lambda + \rho$ (with $2 \rho$ equal to the sum of the positive roots) is seen as an element of $\frac{1}{2} \Z \otimes X_*(\mathcal{T})$, strictly dominant for $\mathcal{B}$.
See \cite[Lemma 3.2]{Langlands} for the extension of $\varphi_{\lambda}$ to $W_{\R}$.
Since $\tau$ is strictly dominant, $C_{\varphi_{\lambda}} = \{ t \in \mathcal{T}\ |\ t^2 = 1 \}$.
Denote by $C_{\varphi_{\lambda}}^+$ the preimage of $C_{\varphi_{\lambda}}$ in $\widehat{\overline{\mathbf{G}}}$ where $\overline{\mathbf{G}} = \mathbf{G} / \mathbf{Z}$.
Denote by $Z(\widehat{\overline{\mathbf{G}}})^+$ the preimage of $Z(\widehat{\mathbf{G}})^{\mathrm{Gal}(\C / \R)}$ in $Z(\widehat{\overline{\mathbf{G}}})$.
Then $Z(\widehat{\overline{\mathbf{G}}})^+ \subset C_{\varphi_{\lambda}}^+$.
Let $\mathcal{S}_{\varphi_{\lambda}}^+ = \pi_0(C_{\varphi_{\lambda}}^+)$, so that there is a surjective morphism $\mathcal{S}_{\varphi_{\lambda}}^+ \rightarrow \mathcal{S}_{\varphi_{\lambda}}$ whose kernel is the image of $Z(\widehat{\overline{\mathbf{G}}})^+$ in $\mathcal{S}_{\varphi_{\lambda}}^+$.
We have that $C_{\varphi_{\lambda}}^+$ has dimension $0$ and so $\mathcal{S}_{\varphi_{\lambda}}^+ = C_{\varphi_{\lambda}}^+$ and the above kernel is $Z(\widehat{\overline{\mathbf{G}}})^+$.
Note also that $C_{\varphi_{\lambda}}^+$ is contained in a maximal torus of $\widehat{\overline{\mathbf{G}}}$, and thus $\mathcal{S}_{\varphi_{\lambda}}^+$ is abelian.

For any inner form $\mathbf{G}$ of $\mathbf{G}^*$, the L-packet $\Pi_{\varphi_{\lambda}}(\mathbf{G})$ of discrete series representations of $\mathbf{G}(\R)$ associated with $\varphi_{\lambda}$ is in bijection with the set $\Sigma_{\mathcal{B}}(\mathbf{G})$ of $\mathbf{G}(\R)$-conjugacy classes of pairs $(\mathbf{B}, \mathbf{T})$ where $\mathbf{T}$ is an anisotropic maximal torus of $\mathbf{G}$ and $\mathbf{B}$ is a Borel subgroup of $\mathbf{G}_{\C}$ containing $\mathbf{T}_{\C}$.
Choose a Whittaker datum $\mathfrak{w}$ for $\mathbf{G}^*$.
By work of Konstant and Vogan, for any dominant weight $\lambda$ there is a unique representation in $\Pi_{\varphi_{\lambda}}(\mathbf{G}^*)$ which is generic for $\mathfrak{w}$.
It corresponds to a $\mathbf{G}^*(\R)$-conjugacy class $\mathrm{cl}(\mathbf{B}^*, \mathbf{T}^*)$ which does not depend on $\lambda$ and has the property that all the simple roots of $\mathbf{T}^*_{\C}$ in $\mathbf{B}^*$ are non-compact.
There is a bijection between $H^1(u_{\R} \rightarrow \mathcal{E}_{\R}, \mathbf{Z} \rightarrow \mathbf{T}^*)$ and the set of isomorphism classes of $(\mathbf{G}, \Xi, z, \mathbf{B}, \mathbf{T})$ where $(\mathbf{G}, \Xi, z)$ is a rigid inner twist of $\mathbf{G}^*$ and $(\mathbf{B}, \mathbf{T})$ is a pair in $\mathbf{G}$ as above, obtained by mapping $z \in Z^1(u_{\R} \rightarrow \mathcal{E}_{\R}, \mathbf{Z} \rightarrow \mathbf{T}^*)$ to $(\mathbf{G}, \Xi, z, \Xi(\mathbf{B}^*), \Xi(\mathbf{T}^*))$, for an arbitrary choice of inner form $(\mathbf{G}, \Xi)$ compatible with $z$.
The surjectivity essentially comes from the fact that any automorphism of $\mathbf{T}^*_{\C}$ is defined over $\R$.
This gives a bijection between $H^1(u_{\R} \rightarrow \mathcal{E}_{\R}, \mathbf{Z} \rightarrow \mathbf{T}^*)$ and the set of isomorphism classes of pairs $((\mathbf{G}, \Xi, z), \pi)$ where $(\mathbf{G}, \Xi, z)$ is a rigid inner twist of $\mathbf{G}^*$ and $\pi \in \Pi_{\varphi_{\lambda}}(\mathbf{G})$.
Finally, the choice of $\mathbf{B}^*$ allows to identify $\mathcal{T}$ with $\widehat{\mathbf{T}^*}$, and thus to identify $\left(\mathcal{S}_{\varphi_{\lambda}}^+ \right)^{\vee}$ with $H^1(u_{\R} \rightarrow \mathcal{E}_{\R}, \mathbf{Z} \rightarrow \mathbf{T}^*)$, using \cite{Kal}[Corollary 5.4].
This concludes the construction of the mapping $\Pi_{\varphi_{\lambda}}(\mathbf{G}) \rightarrow \left(\mathcal{S}_{\varphi_{\lambda}}^+ \right)^{\vee}$, for any rigid inner twist $(\mathbf{G}, \Xi, z)$ of $\mathbf{G}^*$.
We will abusively denote this mapping by $\pi \mapsto \langle \cdot, \pi \rangle$, although it depends on the realisation of $\mathbf{G}$ as a rigid inner twist of $\mathbf{G}^*$ and the choice of the Whittaker datum $\mathfrak{w}$.
Shelstad and Kaletha also prove endoscopic character relations, analogous to equation \ref{eqn:endcharrelqs}.
Fix the dominant weight $\lambda$ and denote $\varphi = \varphi_{\lambda}$ for simplicity.
Any element $\dot{s} \in \mathcal{S}_{\varphi}^+$ yields a \emph{refined endoscopic datum}, a notion defined in \cite[§5.3]{Kal}, $\dot{\mathfrak{e}} = (\mathbf{H}, \mathcal{H}, \dot{s}, \xi)$ for $\mathbf{G}^*$, along with a parameter $\varphi' : W_{\R} \rightarrow \mathcal{H}$ such that $\varphi = \xi \circ \varphi'$.
Moreover the underlying endoscopic datum is elliptic.
Choose an L-embedding ${}^L \xi$ for this endoscopic datum, which is equivalent to choosing an isomorphism ${}^L \mathbf{H} \simeq \mathcal{H}$.
This allows us to see $\varphi'$ as a parameter for $\mathbf{H}$, and $\varphi'$ is clearly discrete.
For any rigid inner twist $(\mathbf{G}, \Xi, z)$ of $\mathbf{G}^*$, Kaletha defines absolute transfer factors $\Delta'[\dot{\mathfrak{e}}, {}^L \xi, \mathfrak{w}, \Xi, z]$.
We replaced the z-pair denoted by $\mathfrak{z}$ in \cite{Kal} by ${}^L \xi$, as we argued in section \ref{sec:ellenddat} that it is not necessary to introduce a z-extension of $\mathbf{H}$. 
By \cite[Proposition 5.10]{Kal}, if $f(g)dg$ and $f(h)dh$ are $\Delta'[\dot{\mathfrak{e}}, {}^L \xi, \mathfrak{w}, \Xi, z]$-matching smooth compactly supported distributions on $\mathbf{G}(\R)$ and $\mathbf{H}(\R)$, we have
\begin{equation} \label{eqn:endcharrelrealtemp}
e(\mathbf{G}) \sum_{\pi \in \Pi_{\varphi}(\mathbf{G})} \langle \dot{s}, \pi \rangle \mathrm{tr}\left(\pi(f(g)dg)\right) = \sum_{\pi' \in \Pi_{\varphi'}(\mathbf{H})} \mathrm{tr}\left(\pi'(f'(h)dh)\right).
\end{equation}
Here $e(\mathbf{G}) = (-1)^{q(\mathbf{G})-q(\mathbf{G}^*)}$ is the sign defined by Kottwitz in \cite{KottSign}.
\begin{rema}
The restriction of each $\langle \cdot, \pi \rangle$ to $Z(\widehat{\overline{\mathbf{G}}})^+$ coincides with the image of $z$ by \cite{Kal}[Corollary 5.4], which justifies the use of the transfer factors $\Delta'$ rather than $\Delta$ (see \cite{KS12}).
Note that in section \ref{sec:Apackqs} $\Delta$ was used, but we could have used $\Delta'$ as well since for any endoscopic datum considered in this paper we have $s^2=1$.
\end{rema}
\begin{rema} \label{rema:singletontemp}
For an \emph{anisotropic} rigid inner twist $(\mathbf{G}, \Xi, z)$ of $\mathbf{G}^*$, there is a unique $\mathbf{G}(\R)$-conjugacy class of pairs $(\mathbf{B}, \mathbf{T})$ in $\mathbf{G}$ as above.
Therefore for any discrete parameter $\varphi$ the L-packet $\Pi_{\varphi}(\mathbf{G})$ consists of only one representation. 
\end{rema}

\begin{exam}
Let us end this section by discussing an example.
For $n \geq 2$ let $\mathbf{G}^* = \mathbf{SO}_{2n}^{(-1)^n} = \mathbf{SO}(V, q)$ where $q$ has signature $(n,n)$ (resp.\ $(n+1,n-1)$) if $n$ is even (resp.\ odd).
We use the same notation as in section \ref{sec:innerformsreal}.
If $n$ is odd the quasi-split group $\mathbf{G}^*$ has a unique $\mathbf{G}^*(\R)$-conjugacy class of Whittaker datum $\mathfrak{w}$.
If $n$ is even there are two such conjugacy classes, and we let $\mathfrak{w}$ be the one corresponding to the pair $(\mathbf{B}^*, \mathbf{T}^*)$ chosen in section \ref{sec:innerformsreal}.
Let $(\mathbf{G}, \Xi, z)$ where $z \in Z^1(\R, \mathbf{T}^*)$ be the pure inner twist of $\mathbf{G}^*$ corresponding to the signature $(2n,0)$.

We realise the dual group ${}^L \mathbf{G} = {}^L \mathbf{G}^*$ as in section \ref{sec:defgroups}, using the symmetric bilinear form $B(v_i, v_j) = \delta_{i, 2n+1-j}$ on $\C v_1 \oplus \dots \oplus v_{2n}$.
Recall that the set of simple roots for $(\mathbf{B}^*, \mathbf{T}^*)$ is $\Delta = \{e_1-e_2, \dots, e_{n-1}-e_n, e_{n-1}+e_n\}$.
Let $\mathcal{T}$ be the maximal torus of $\widehat{\mathbf{G}}$ consisting of diagonal matrices $t = \mathrm{diag}(t_1, \dots, t_n, t_n^{-1}, \dots, t_1^{-1})$ for $t_1, \dots, t_n \in \C^{\times}$.
Let $\mathcal{B} \supset \mathcal{T}$ be the Borel subgroup of $\widehat{\mathbf{G}}$ consisting of upper triangular matrices in $\widehat{\mathbf{G}}$.
The corresponding set of simple roots is $\Delta^* = \{e_1^*-e_2^*, \dots, e_{n-1}^*-e_{n}^*, e_{n-1}^*+e_n^*\}$ where $e_j^*(t) = t_j$.
There exists a pinning $(X_{\alpha^*})_{\alpha^* \in \Delta^*}$ for $(\mathcal{B}, \mathcal{T})$ preserved by $\{1\} \rtimes \mathrm{Gal}(\C / \R) \subset {}^L \mathbf{G}$.
Thanks to the choice of Borel subgroups there is a canonical isomorphism $\widehat{\mathbf{T}^*} \simeq \mathcal{T}$, and we chose the notation for $\Delta^*$ accordingly.
The class of $z$ in $H^1(\R, \mathbf{T}^*)$ corresponds to the character $e_2^* + e_4^* + \dots + e_{2 \lfloor n/2 \rfloor}^* \in X^*(\mathcal{T}) / 2 X^*(\mathcal{T}) = X_*(\mathbf{T}^*) / 2 X_*(\mathbf{T}^*)$ of $\{ t \in \mathcal{T} \,|\, t^2 = 1 \}$.

Let $\lambda$ be a dominant weight for $\mathbf{G}^*$, i.e.\ $\lambda = \lambda_1 e_1 + \dots + \lambda_n e_n$ where $\lambda_1 \geq \dots \geq \lambda_{n-1} \geq |\lambda_n|$ are integers.
The corresponding discrete parameter $\varphi_{\lambda}$ can be chosen (in the canonical $\widehat{\mathbf{G}}$-conjugacy class) so that for all $z \in W_{\C}$, $\varphi_{\lambda}(z) = \mathrm{diag}((z/\bar{z})^{\lambda_1+n-1}, \dots, (z/\bar{z})^{\lambda_n}, \dots) \in \mathcal{T}$, i.e.\ $\varphi_{\lambda}|_{W_{\C}}$ is dominant for $(\mathcal{B}, \mathcal{T})$.
Thus the unique element of the L-packet $\Pi_{\varphi_{\lambda}}(\mathbf{G})$ corresponds to the character $e_2^* + e_4^* + \dots + e_{2 \lfloor n/2 \rfloor}^*$ of $\{ t \in \mathcal{T} \,|\, t^2 = 1 \} = C_{\varphi_{\lambda}}$.
If $n$ is even and if we choose the other Whittaker datum, the associated character becomes $e_1^* + \dots + e_{n-1}^*$.
\end{exam}

\subsubsection{Non-tempered parameters: Adams-Johnson packets}
\label{sec:AJ}

We now turn to the non-tempered case.
As before $\mathbf{G}$ will denote any inner form of $\mathbf{G}^*$.
Let $\psi : W_{\R} \times \mathrm{SL}_2(\C) \rightarrow {}^L \mathbf{G}$ be an Arthur-Langlands parameter which is bounded on $W_{\R}$.
We say that $\psi$ is an Adams-Johnson parameter if the central character of $\psi$ is algebraic regular.
This is the case if $\psi$ is trivial on $\mathrm{SL}_2(\C)$ and defines a discrete Langlands parameter, but there are other examples as well.
An Adams-Johnson parameter is always discrete, i.e.\ its centraliser in $\widehat{\mathbf{G}}$ is contained in $Z(\widehat{\mathbf{G}})$, but not all discrete Arthur-Langlands parameter are Adams-Johnson parameters.
From now on we assume that $\psi$ is an Adams-Johnson parameter.
We refer to \cite[§4.2.2]{Taidimtrace} for the details of what follows.
Recall that there is a Langlands parameter $\varphi_{\psi}$ associated with $\psi$, and that this parameter is not tempered if $\psi|_{\mathrm{SL}_2(\C)}$ is not trivial.
Up to conjugating by $\widehat{\mathbf{G}}$ we can assume that $\varphi_{\psi}|_{W_{\C}}$ takes values in $\mathcal{T}$ and that the holomorphic part of $\varphi_{\psi}|_{W_{\C}}$ coincides with that of $\varphi_{\lambda}|_{W_{\C}}$ as above, for a dominant weight $\lambda$ for $\mathbf{G}$.
Then, defining $C_{\psi}$, $C_{\psi}^+$ and $\mathcal{S}_{\psi}^+$ as in the tempered case, we have $C_{\psi} \subset C_{\varphi_{\lambda}}$, $C_{\psi}^+ \subset C_{\varphi_{\lambda}}^+$ and $\mathcal{S}_{\psi}^+ \subset \mathcal{S}_{\varphi_{\lambda}}^+$.
Adams and Johnson \cite{AdJo} have attached a packet $\Pi_{\psi}^{\mathrm{AJ}}(\mathbf{G})$ of irreducible representations of $\mathbf{G}(\R)$ to $\psi$, and proved endoscopic character relations similar to \ref{eqn:endcharrelqs}, but only up to a multiplicative constant because only \emph{relative} transfer factors were known at the time.
Exactly as in \cite[§4.2.2]{Taidimtrace} which only adressed the quasi-split case, we will argue that knowing the tempered case above is enough to remove this ambiguity.
Let $\mathcal{L} = \mathrm{Cent}(\psi(W_{\C}), \widehat{\mathbf{G}})$.
Similarly to the tempered case, $\Pi_{\psi}^{\mathrm{AJ}}(\mathbf{G})$ is in bijection with a certain set $\Sigma_{\mathcal{L}}(\mathbf{G})$ of $\mathbf{G}(\R)$-conjugacy classes of pairs $(\mathbf{Q}, \mathbf{L})$, and there is a natural surjection $\Sigma_{\mathcal{B}}(\mathbf{G}) \rightarrow \Sigma_{\mathcal{L}}(\mathbf{G})$.
The choice of a Whittaker datum gives us a distinguished element $\mathrm{cl}(\mathbf{Q}^*, \mathbf{L}^*) \in \Sigma_{\mathcal{L}}(\mathbf{G}^*)$, that is the image of $\mathrm{cl}(\mathbf{B}^*, \mathbf{T}^*) \in \Sigma_{\mathcal{B}}(\mathbf{G}^*)$.
There is a natural bijection between $H^1(u_{\R} \rightarrow \mathcal{E}_{\R}, \mathbf{Z} \rightarrow \mathbf{L}^*)$ and the set of isomorphism classes of pairs $((\mathbf{G}, \Xi, z), (\mathbf{Q}, \mathbf{L}))$ where $(\mathbf{G}, \Xi, z)$ is a rigid inner twist of $\mathbf{G}^*$ and $\mathrm{cl}(\mathbf{Q}, \mathbf{L}) \in \Sigma_{\mathcal{L}}(\mathbf{G})$.
This realises each such $\mathbf{L}$ as a rigid inner twist of $\mathbf{L}^*$.
Now fix a rigid inner twist $(\mathbf{G}, \Xi, z)$ of $\mathbf{G}^*$.
By \cite[§5]{ArthurUnip}, if $\mathrm{cl}(\mathbf{B}_1, \mathbf{T}_1), \mathrm{cl}(\mathbf{B}_2, \mathbf{T}_2) \in \Sigma_{\mathcal{B}}(\mathbf{G})$ map to the same element in $\Sigma_{\mathcal{L}}(\mathbf{G})$, then their associated characters of $\mathcal{S}_{\varphi_{\lambda}}^+$ coincide on $\mathcal{S}_{\psi}^+$.
Thus we get a mapping $\Pi_{\psi}^{\mathrm{AJ}}(\mathbf{G}) \rightarrow \left(\mathcal{S}_{\varphi_{\lambda}}^+\right)^{\vee}$, which as in the tempered case we simply denote by $\pi \mapsto \langle \cdot, \pi \rangle$.
\begin{rema} \label{rema:singletonAJ}
As in the tempered case and Remark \ref{rema:singletontemp}, in the case of an \emph{anisotropic} rigid inner twist $(\mathbf{G}, \Xi, z)$, for any Adams-Johnson parameter $\psi$ the packet $\Pi_{\psi}^{\mathrm{AJ}}(\mathbf{G})$ is a singleton.
\end{rema}
Observe that the preimage $\widetilde{\mathcal{L}}$ of $\mathcal{L}$ in $\widehat{\overline{\mathbf{G}}}$ is isomorphic to $\widehat{\overline{\mathbf{L}}} = \widehat{\overline{\mathbf{L}^*}}$, and that the $\mathrm{Gal}(\C / \R)$ actions coincide if we let $\mathrm{Gal}(\C / \R)$ act on $\widetilde{\mathcal{L}}$ via $\psi|_{W_{\R}}$.
The pairing between $H^1(u_{\R} \rightarrow \mathcal{E}_{\R}, \mathbf{Z} \rightarrow \mathbf{L}^*)$ and $\mathcal{S}_{\psi}^+ \simeq Z(\widehat{\overline{\mathbf{L}}})^+$ is naturally the one given by \cite{Kal}[Corollary 5.4].
Finally we observe that there is a canonical lift of $s_{\psi} \in C_{\psi}$ to $C_{\psi}^+$, since $\mathrm{SL}_2(\C)$ is simply connected.
In fact $s_{\psi}$ is well-defined in $Z(\mathcal{L}_{\mathrm{sc}})^{\mathrm{Gal}(\C / \R)} = Z(\widehat{\mathbf{L}_{\mathrm{ad}}})^{\mathrm{Gal}(\C / \R)}$: $\psi|_{\mathrm{SL}_2(\C)} : \mathrm{SL}_2(\C) \rightarrow \mathcal{L}$ is the principal morphism and thus $s_{\psi}$ is equal to the sum of the positive coroots for $\mathcal{L}$ evaluated at $-1$, for any choice of ordering on the coroots.
We still denote its image in $C_{\psi}^+$ and $\mathcal{S}_{\psi}^+$ by $s_{\psi}$.
We can now prove the precise endoscopic character relations, refining \cite[Theorem 2.21]{AdJo}.
\begin{prop} \label{prop:AJrigid}
Let $\dot{s} \in \mathcal{S}_{\psi}^+$, which induces a refined endoscopic datum $\dot{\mathfrak{e}} = (\mathbf{H}, \mathcal{H}, \dot{s}, \xi)$ for $\mathbf{G}$, and choose an L-embedding ${}^L \xi : {}^L \mathbf{H} \rightarrow {}^L \mathbf{G}$.
Denote by $\psi'$ the Adams-Johnson parameter for $\mathbf{H}$ such that $\psi = {}^L \xi \circ \psi'$.
For any $\Delta'[\dot{\mathfrak{e}}, {}^L \xi, \mathfrak{w}, \Xi, z]$-matching smooth compactly supported distributions $f(g)dg$ on $\mathbf{G}(\R)$ and $f'(h)dh$ on $\mathbf{H}(\R)$, we have
\begin{equation} \label{eqn:endcharrelrealAJ}
e(\mathbf{G}) \sum_{\pi \in \Pi_{\psi}^{\mathrm{AJ}}(\mathbf{G})} \langle \dot{s} s_{\psi}, \pi \rangle \mathrm{tr}\left(\pi(f(g)dg)\right) = \sum_{\pi' \in \Pi_{\psi'}^{\mathrm{AJ}}(\mathbf{H})} \langle s_{\psi'}, \pi' \rangle \mathrm{tr}\left(\pi'(f'(h)dh)\right).
\end{equation}
\end{prop}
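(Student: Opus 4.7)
The plan is to follow the strategy already employed in the quasi-split case in \cite[§4.2.2]{Taidimtrace}, adapting it to rigid inner twists. First, invoke \cite[Theorem 2.21]{AdJo}, which gives the identity \ref{eqn:endcharrelrealAJ} up to an unknown multiplicative constant $c = c(\psi, \dot{\mathfrak{e}}, \mathfrak{w}, \Xi, z)$. This constant expresses the ratio between Kaletha's absolute transfer factor $\Delta'[\dot{\mathfrak{e}}, {}^L\xi, \mathfrak{w}, \Xi, z]$ and the relative transfer factors implicitly used by Adams--Johnson. The task reduces to showing $c=1$.

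The idea is to reduce the identity to the tempered endoscopic relation \ref{eqn:endcharrelrealtemp} applied to a Levi subgroup, where Kaletha's normalisation already pins down the constant. Recall that each $\pi \in \Pi_{\psi}^{\mathrm{AJ}}(\mathbf{G})$ is cohomologically induced from a one-dimensional unitary character of a Levi subgroup $\mathbf{L}(\R)$, with $\mathrm{cl}(\mathbf{Q}, \mathbf{L}) \in \Sigma_{\mathcal{L}}(\mathbf{G})$, and that $\mathbf{L}$ inherits a rigid inner twist structure from $(\mathbf{G}, \Xi, z)$ through the bijection $H^1(u_{\R} \to \mathcal{E}_{\R}, \mathbf{Z} \to \mathbf{L}^*) \leftrightarrow \Sigma_{\mathcal{L}}(\mathbf{G})$ recalled in section \ref{sec:AJ}. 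Similarly, each $\pi' \in \Pi_{\psi'}^{\mathrm{AJ}}(\mathbf{H})$ is cohomologically induced from a character of a Levi $\mathbf{M}(\R) \subset \mathbf{H}(\R)$, and the refined endoscopic datum $\dot{\mathfrak{e}}$ for $\mathbf{G}$ restricts canonically, via the identification $\widetilde{\mathcal{L}} \simeq \widehat{\overline{\mathbf{L}}}$, to a refined endoscopic datum $\dot{\mathfrak{e}}_{\mathbf{L}}$ for $\mathbf{L}^*$ whose endoscopic group is $\mathbf{M}^*$. The character of a cohomologically induced representation is expressed, on the elliptic regular set, by a standard parabolic descent formula in terms of the inducing character on $\mathbf{L}(\R)$; after applying this expansion on both sides of \ref{eqn:endcharrelrealAJ}, the identity becomes equivalent to the tempered endoscopic identity \ref{eqn:endcharrelrealtemp} for the pair $(\mathbf{L}, \mathbf{M})$ applied to appropriate descended test functions. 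By \cite[§5.6]{Kal}, this tempered identity holds exactly, with the pairings $\langle \cdot, \pi \rangle$ restricted to $\mathcal{S}_{\psi}^+ \subset \mathcal{S}_{\varphi_{\lambda}}^+$ agreeing with the pairings coming from $Z(\widehat{\overline{\mathbf{L}}})^+$; this forces $c=1$ provided that the normalisations of transfer factors match under parabolic descent.

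The main obstacle is therefore the careful verification that Kaletha's absolute transfer factor $\Delta'[\dot{\mathfrak{e}}, {}^L\xi, \mathfrak{w}, \Xi, z]$ on $\mathbf{G}$ agrees, under parabolic descent from $(\mathbf{Q}, \mathbf{L})$ to $(\mathbf{M} \cap \mathbf{Q}, \mathbf{M})$, with Kaletha's absolute transfer factor for $\dot{\mathfrak{e}}_{\mathbf{L}}$ computed using the restriction of $z$ to $Z^1(u_{\R} \to \mathcal{E}_{\R}, \mathbf{Z} \to \mathbf{L}^*)$ and the Whittaker datum $\mathfrak{w}_{\mathbf{L}}$ induced on $\mathbf{L}$ by $\mathfrak{w}$ and $\mathbf{Q}$. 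This compatibility is implicit in the cocycle formulae of \cite[§5.3]{Kal}: the key inputs are the parabolic descent identity for the classical Langlands--Shelstad--Kottwitz factors, the canonical lift of $s_{\psi}$ to $Z(\widehat{\mathbf{L}_{\mathrm{ad}}})^{\mathrm{Gal}(\C/\R)}$ coming from the simply-connectedness of $\mathrm{SL}_2(\C)$, and the matching of the Kottwitz signs $e(\mathbf{G}) = e(\mathbf{L}) e(\mathbf{G}/\mathbf{L})$. Once this descent compatibility is established (essentially a bookkeeping exercise parallel to, but more delicate than, the quasi-split verification in \cite[§4.2.2]{Taidimtrace}), the constant in AJ's identity is determined to be $1$ and the proposition follows. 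In the anisotropic case one sees directly, using Remark \ref{rema:singletonAJ}, that both sides reduce to a single term and the identity collapses to a one-line verification.
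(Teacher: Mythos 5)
Your opening move coincides with the paper's: invoke \cite[Theorem 2.21]{AdJo} to get \ref{eqn:endcharrelrealAJ} up to a constant $c$, restrict attention to test distributions supported on the elliptic regular set, and pin down $c$ by comparison with an identity whose normalisation is already known. (The paper also needs a preliminary lemma to put AJ's theorem in the stated form, namely $\langle s_{\psi}, \pi_{\psi,\mathbf{Q},\mathbf{L}} \rangle = e(\mathbf{L})$, proved via the compatibility of Kaletha's pairing with the Kottwitz pairing for $\mathbf{L}^*_{\mathrm{ad}}$; you gesture at this with the canonical lift of $s_{\psi}$ but do not isolate it.) Where you diverge is in the choice of the known identity: you descend to the Levi $\mathbf{L}$ and its endoscopic group $\mathbf{M}$, expressing characters of cohomologically induced representations on the elliptic set in terms of the inducing one-dimensional characters, and then want to quote \ref{eqn:endcharrelrealtemp} ``for the pair $(\mathbf{L},\mathbf{M})$''. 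This is where the gap is. First, \ref{eqn:endcharrelrealtemp} is only available for \emph{discrete} parameters of the groups of section \ref{sec:defgroups} and their rigid inner twists; the inducing datum on $\mathbf{L}(\R)$ is a one-dimensional unitary character of a Levi (a product of general linear groups and a smaller classical group), which is neither a discrete series nor covered by the normalisation of \cite[\S 5.6]{Kal} as set up here. Second, the parabolic-descent compatibility of the rigid absolute transfer factors $\Delta'[\dot{\mathfrak{e}}, {}^L\xi, \mathfrak{w}, \Xi, z]$ — including the behaviour of the Whittaker normalisation and of the cocycle $z$ under passage to $(\mathbf{Q},\mathbf{L})$ — is precisely the kind of delicate normalisation question the proposition is meant to settle; declaring it ``essentially a bookkeeping exercise'' assumes the conclusion.

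The paper's route avoids descent altogether. Using Johnson's resolution of Adams--Johnson modules by standard modules \cite{Johnson}, on the elliptic regular set the signed sum $\sum_{\pi \in \Pi_{\psi}^{\mathrm{AJ}}(\mathbf{G})} \langle \dot{s} s_{\psi}, \pi \rangle \,\mathrm{tr}\,\pi$ equals $(-1)^{q(\mathbf{L}^*)}$ times the corresponding sum over the \emph{discrete series} L-packet $\Pi_{\varphi_{\lambda}}(\mathbf{G})$ attached to the aligned discrete parameter $\varphi_{\lambda}$, with the \emph{same} test distribution; the same holds for $\mathbf{H}$, with the same sign since $\mathcal{L}$ and $\mathcal{L}'$ are isomorphic compatibly with the Galois actions. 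The constant $c$ is then forced to be $1$ by the already-normalised tempered identity \ref{eqn:endcharrelrealtemp} for $\mathbf{G}$ and $\mathbf{H}$ themselves, plus the non-vanishing of the left-hand side for some elliptic test distribution (a point you should state explicitly). If you want to salvage your approach you would have to prove the descent compatibility of Kaletha's factors and an endoscopic identity for the inducing characters of $\mathbf{L}$ from scratch; switching to Johnson's resolution makes both unnecessary.
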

\begin{proof}
Denote by $\pi_{\psi, \mathbf{Q}, \mathbf{L}} \in \Pi_{\psi}^{\mathrm{AJ}}(\mathbf{G})$ the representations associated with $\mathrm{cl}(\mathbf{Q}, \mathbf{L}) \in \Sigma_{\mathbf{L}}(\mathbf{G})$.
As usual denote $q(\mathbf{L}) = \frac{1}{2}\dim \mathbf{L}(\R) / K_{\mathbf{L}}$ for any maximal compact subgroup $K_{\mathbf{L}}$ of $\mathbf{L}(\R)$.
Recall Kottwitz' sign $e(\mathbf{L}) = (-1)^{q(\mathbf{L}) - q(\mathbf{L}_{\mathrm{qs}})}$ where $\mathbf{L}_{\mathrm{qs}}$ is the quasi-split inner form of $\mathbf{L}$.
We need a preliminary result similar to \cite[Lemma 9.1]{KottAA} or \cite[Lemma 5.1]{ArthurUnip}, namely that $\langle s_{\psi}, \pi \rangle = e(\mathbf{L})$.
By \cite[Lemma 4.2.1]{Taidimtrace} the group $\mathbf{L}^*$ is quasi-split, i.e.\ $\mathbf{L}_{\mathrm{qs}} = \mathbf{L}^*$.
The representation $\pi$ corresponds to an element of $ H^1(u_{\R} \rightarrow \mathcal{E}_{\R}, \mathbf{Z} \rightarrow \mathbf{L}^*)$, and $\langle s_{\psi}, \pi \rangle$ is obtained using the pairing \cite{Kal}.
This pairing is compatible with the pairing between $H^1(\R, \mathbf{L}^*_{\mathrm{ad}})$ and $Z(\widehat{\mathbf{L}}_{\mathrm{sc}})^{\mathrm{Gal}(\C / \R)}$ and $s_{\psi}$ is well-defined in the latter, as the sum of the positive coroots evaluated at $-1$.
It is well-known that this recovers $e(\mathbf{L})$.

Reformulating \cite[Theorem 2.21]{AdJo} using this preliminary result, we get that there exists $c \in \C^{\times}$ such that
$$ e(\mathbf{G}) \sum_{\pi \in \Pi_{\psi}^{\mathrm{AJ}}(\mathbf{G})} \langle \dot{s} s_{\psi}, \pi \rangle \mathrm{tr}\left(\pi(f(g)dg)\right) = c \sum_{\pi' \in \Pi_{\psi'}^{\mathrm{AJ}}(\mathbf{H})} \langle s_{\psi'}, \pi' \rangle \mathrm{tr}\left(\pi'(f'(h)dh)\right), $$
and we are left to prove that $c=1$.
We proceed as in the quasi-split case \cite[§4.2.2]{Taidimtrace}: we restrict to distributions $f(g)dg$ whose support is contained in semisimple elliptic regular conjugacy classes of $\mathbf{G}(\R)$.
Let $\varphi = \varphi_{\lambda}$ be the discrete Langlands parameter aligned with $\psi$ as above.
Then using Johnson's resolution of Adams-Johnson representations \cite{Johnson}, 
$$ \sum_{\pi \in \Pi_{\psi}^{\mathrm{AJ}}(\mathbf{G})} \langle \dot{s} s_{\psi}, \pi \rangle \mathrm{tr}\left(\pi(f(g)dg)\right) = (-1)^{q(\mathbf{L}^*)} \sum_{\pi \in \Pi_{\varphi}(\mathbf{G})} \langle \dot{s}, \pi \rangle \mathrm{tr}\left(\pi(f(g)dg)\right) $$
and similarly for $\mathbf{H}$.
Note that the sign $(-1)^{q(\mathbf{L}^*)}$ is identical for $\psi$ and $\psi'$, since the groups $\mathcal{L} \subset \widehat{\mathbf{G}}$ and $\mathcal{L}' \subset \widehat{\mathbf{H}}$ are naturally isomorphic, compatibly with their Galois actions.
As a consequence of \ref{eqn:endcharrelrealtemp} and these equalities for $\psi$ and $\psi'$, the relation \ref{eqn:endcharrelrealAJ} holds for such distributions $f(g)dg$.
There exists such a distribution for which the left hand side of \ref{eqn:endcharrelrealAJ} does not vanish, and thus $c=1$ and \ref{eqn:endcharrelrealAJ} is valid for any distribution.
\end{proof}

\subsubsection{Relation with Arthur packets}
\label{sec:AJArthur}

For our global purposes, which will rely on \cite{Arthur}, we would like to replace $\Pi_{\psi'}^{\mathrm{AJ}}(\mathbf{H})$ in the right hand side of equation \ref{eqn:endcharrelrealAJ} by Arthur's packet $\Pi_{\psi'}(\mathbf{H})$ reviewed in section \ref{sec:Apackqs}, that is
\begin{equation} \label{eqn:endcharrelrealAJArthur}
e(\mathbf{G}) \sum_{\pi \in \Pi_{\psi}^{\mathrm{AJ}}(\mathbf{G})} \langle \dot{s} s_{\psi}, \pi \rangle \mathrm{tr}\left(\pi(f(g)dg)\right) = \sum_{\pi' \in \Pi_{\psi'}(\mathbf{H})} \langle s_{\psi'}, \pi' \rangle \mathrm{tr}\left(\pi'(f'(h)dh)\right).
\end{equation}
For this to even make sense it is necessary for all the stable orbital integrals of $f'(h)dh$ to be invariant under the subgroup $\mathrm{Out}(\mathbf{H}_1) \times \mathrm{Out}(\mathbf{H}_2)$ of $\mathrm{Out}(\mathbf{H})$, where $\mathbf{H} \simeq \mathbf{H}_1 \times \mathbf{H}_2$ as in section \ref{sec:ellenddat}.

In the quasi-split case, more precisely for the trivial rigid inner twist of $\mathbf{G}^*$, it is known (and was used by Arthur) that this is the case if $f(g)dg$ has $\mathrm{Out}(\mathbf{G})$-invariant orbital integrals, for the choice of rational outer automorphism described in section \ref{sec:Apackqs}.
In this case Arancibia, Moeglin and Renard \cite{AMR} recently proved that for any Adams-Johnson parameter $\psi$, the packets of Adams-Johnson and Arthur and the maps to $\left(\mathcal{S}_{\psi}\right)^{\vee}$ coincide: this is equivalent to equation \ref{eqn:endcharrelrealAJArthur} for all $\psi$ but with $\dot{s}=1$, by Fourier inversion on $\mathcal{S}_{\psi}$.

Consequently for a non-trivial rigid inner twist $(\mathbf{G}, \Xi, z)$ equation \ref{eqn:endcharrelrealAJArthur} holds provided that all stable orbital integrals of $f'(h)dh$ are invariant under $\mathrm{Out}(\mathbf{H}_1) \times \mathrm{Out}(\mathbf{H}_2)$.
As explained in Remark \ref{rema:onlypureevenorth}, in the even orthogonal case to even formulate the $\mathrm{Out}(\mathbf{G})$-invariance of orbital integrals of $f(g)dg$, we have to restrict to groups $\mathbf{G}$ realised as \emph{pure} inner twists $(\mathbf{G}, \Xi, z)$, i.e.\ assume that $z \in Z^1(\R, \mathbf{G}^*)$.
We make this assumption and treat the three types of groups separately.
As before we denote by $\dot{\mathfrak{e}}$ the refined endoscopic datum $(\mathbf{H}, \mathcal{H}, \dot{s}, \xi)$ and by $\mathfrak{e}$ the underlying endoscopic datum, and fix an L-embedding ${}^L \xi : {}^L \mathbf{H} \rightarrow {}^L \mathbf{G}$.
\begin{itemize}
\item If $\mathbf{G}^* = \mathbf{SO}_{2n+1}$, the group $\mathrm{Out}(\mathbf{H}_1) \times \mathrm{Out}(\mathbf{H}_2)$ is always trivial and so there is nothing to prove.
\item If $\mathbf{G}^* = \mathbf{Sp}_{2n}$ and $\mathbf{H} \simeq \mathbf{Sp}_{2a} \times \mathbf{SO}_{2b}^{\alpha}$ with $b>0$, then $\mathrm{Out}(\mathbf{H}_1)$ is trivial, $\mathrm{Out}(\mathbf{H}_2) \simeq \Z / 2 \Z$ and
$$ \mathrm{Out}(\mathbf{H}_1) \times \mathrm{Out}(\mathbf{H}_2) = \mathrm{Out}(\mathbf{H}) \simeq \mathrm{Out}(\mathfrak{e}) = \mathrm{Out}(\dot{\mathfrak{e}}). $$
The last equality is crucial: in general we only have $\mathrm{Out}(\dot{\mathfrak{e}}) \subset \mathrm{Out}(\mathfrak{e})$, while the transfer factors defined in \cite[§5.3]{Kal} are only invariant by isomorphisms of \emph{refined} endoscopic data \cite[Proposition 5.6]{Kal}.
Furthermore, the L-embedding ${}^L \xi$ is preserved by $\mathrm{Out}(\dot{\mathfrak{e}})$.
Therefore, if $\mathrm{cl}(\gamma_1)$ and $\mathrm{cl}(\gamma_2)$ are stable conjugacy classes of semisimple regular elements of $\mathbf{H}(\R)$ exchanged by the outer automorphism of $\mathbf{H}$, then $\gamma_1$ is a norm of a conjugacy class $\mathrm{cl}(\delta)$ in $\mathbf{G}(\R)$ (in the sense of \cite[§3]{KS}) if and only if $\gamma_2$ is a norm of $\mathrm{cl}(\delta)$, and if this is the case then
$$ \Delta'[\dot{\mathfrak{e}}, {}^L \xi, \mathfrak{w}, \Xi, z](\gamma_1, \delta) = \Delta'[\dot{\mathfrak{e}}, {}^L \xi, \mathfrak{w}, \Xi, z](\gamma_2, \delta). $$
This implies that for any smooth compactly supported distribution $f(g)dg$ on $\mathbf{G}(\R)$, the stable orbital integrals of any smooth compactly supported distribution $f'(h)dh$ on $\mathbf{H}(\R)$ which is a transfer of $f(g)dg$ are invariant under $\mathrm{Out}(\mathbf{H})$.
\item The last case to consider is $\mathbf{G}^* = \mathbf{SO}_{2n}^{\alpha}$ and $\mathbf{H} \simeq \mathbf{SO}_{2a}^{\beta} \times \mathbf{SO}_{2b}^{\gamma}$.
Using the same arguments as in the previous case, one could show that if $a,b>0$ then the transfer factors are invariant under $(\theta_1, \theta_2)$ where $\theta_i$ is the non-trivial outer automorphism of $\mathbf{H}_i$.
We still have to consider the remaining coset in $\mathrm{Out}(\mathbf{H}_1) \times \mathrm{Out}(\mathbf{H}_2)$, in parallel with the non-trivial outer automorphism $\theta$ of $\mathbf{G}$.
Since we only consider groups $\mathbf{G}$ realised as \emph{pure} inner twists $(\mathbf{G}, \Xi, z)$ we can use Waldspurger's explicit formulae \cite{WaldFormulaire} for the transfer factors.
One can lift $\theta$ to an automorphism of $\mathbf{G}$ defined over $\R$ in a natural way since $\mathbf{G}$ is naturally a special orthogonal group, by considering any $\R$-point of the non-trivial connected component of the corresponding orthogonal group.
We still denote by $\theta$ any such lift, it is unique up to composing it with $\mathrm{Ad}(g)$ for some $g \in \mathbf{G}(\R)$.
One can simply observe on Waldspurger's formulae that the transfer factors are invariant under the action of $(\theta_1, \theta_2)$ on $\mathbf{H}$ if $a,b>0$, as well as under the simultaneous action of $\theta$ on $\mathbf{G}$ and the non-trivial element of $\left(\mathrm{Out}(\mathbf{H}_1) \times \mathrm{Out}(\mathbf{H}_2)\right)/\{ 1, (\theta_1, \theta_2) \}$ (resp.\ $\mathrm{Out}(\mathbf{H})$) if $a,b>0$ (resp.\ if $ab=0$).
To be precise we ought to remark that Waldspurger leaves out the $\epsilon$-factor of \cite[§5.3]{KS}, but this is just a constant that does not change the invariance properties, and that Waldspurger computed $\Delta$ whereas we use $\Delta'$ in this paper.
This is not a real issue since the factors $\Delta'$, defined in \cite{KS12} and \cite{Kal}, are equal to the factors $\Delta$ for the refined endoscopic datum obtained by replacing $\dot{s}$ by $\dot{s}^{-1}$.

This implies that for any smooth compactly supported distribution $f(g)dg$ on $\mathbf{G}(\R)$ having $\theta$-invariant orbital integrals, the stable orbital integrals of any smooth compactly supported distribution $f'(h)dh$ on $\mathbf{H}(\R)$ which is a transfer of $f(g)dg$ are invariant under $\mathrm{Out}(\mathbf{H}_1) \times \mathrm{Out}(\mathbf{H}_2)$.
\end{itemize}
For clarity, let us restate what we have deduced from \cite{AMR}.
\begin{prop} \label{prop:realAJArthur}
Let $(\mathbf{G}, \Xi, z)$ be a rigid inner twist of a reductive group $\mathbf{G}^*$ over $\R$ as in section \ref{sec:defgroups}.
Assume that $\mathbf{G}^*$ admits discrete series.
If $\mathbf{G}^*$ is even orthogonal, assume also that $z \in Z^1(\R, \mathbf{G}^*)$ and let $\theta$ be an automorphism of $\mathbf{G}$ as in the above discussion.
Let $\psi$ be an Adams-Johnson parameter for $\mathbf{G}$.
Let $\dot{s} \in C_{\psi}^+$, let $\mathfrak{e} = (\mathbf{H}, \mathcal{H}, \dot{s}, \xi)$ be the induced elliptic refined endoscopic datum, and choose an L-embedding ${}^L \xi$ for this endoscopic datum.
As explained in section \ref{sec:ellenddat} $\mathbf{H}$ decomposes as a product $\mathbf{H}_1 \times \mathbf{H}_2$.
Let $f(g)dg$ be a smooth compactly supported distribution on $\mathbf{G}(\R)$.
In the case $\mathbf{G}^* = \mathbf{SO}_{2n}^{\alpha}$ we assume that the orbital integrals of $f(g)dg$ are $\theta$-invariant.
Let $f'(h)dh$ be a transfer of $f(g)dg$ with respect to the transfer factors $\Delta'[\dot{\mathfrak{e}}, {}^L \xi, \mathfrak{w}, \Xi, z]$.

Then the stable orbital integrals of $f'(h)dh$ are invariant under $\mathrm{Out}(\mathbf{H}_1) \times \mathrm{Out}(\mathbf{H}_2)$ and the endoscopic character relation \ref{eqn:endcharrelrealAJArthur} holds.
\end{prop}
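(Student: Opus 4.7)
The plan is to combine three ingredients: Proposition \ref{prop:AJrigid} (which gives the endoscopic character relation \ref{eqn:endcharrelrealAJ} for Adams-Johnson packets on both sides), the theorem of Arancibia-Moeglin-Renard \cite{AMR} identifying Adams-Johnson packets with Arthur packets for quasi-split special orthogonal and symplectic groups compatibly with the map to $\mathcal{S}_{\psi'}^\vee$, and an analysis of the invariance properties of Waldspurger's transfer factors under $\mathrm{Out}(\mathbf{H}_1) \times \mathrm{Out}(\mathbf{H}_2)$. Applied to the endoscopic group $\mathbf{H} = \mathbf{H}_1 \times \mathbf{H}_2$, which is itself quasi-split, AMR lets us substitute $\Pi_{\psi'}(\mathbf{H})$ for $\Pi_{\psi'}^{\mathrm{AJ}}(\mathbf{H})$ on the right-hand side of \ref{eqn:endcharrelrealAJ}, yielding \ref{eqn:endcharrelrealAJArthur} \emph{provided} that the right-hand side is well-defined, i.e.\ that $f'(h)dh$ has $\mathrm{Out}(\mathbf{H}_1) \times \mathrm{Out}(\mathbf{H}_2)$-invariant stable orbital integrals (since Arthur's distribution $\Lambda_{\psi'}$ only makes sense up to this action). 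The entire work therefore reduces to proving this invariance.

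I would then handle the invariance of stable orbital integrals case by case, following the classification in section \ref{sec:ellenddat}. When $\mathbf{G}^* = \mathbf{SO}_{2n+1}$ the factor groups $\mathrm{Out}(\mathbf{H}_i)$ are trivial and there is nothing to do. When $\mathbf{G}^* = \mathbf{Sp}_{2n}$ and $\mathbf{H} = \mathbf{Sp}_{2a} \times \mathbf{SO}_{2b}^\alpha$ with $b>0$, the key observation is that $\mathrm{Out}(\dot{\mathfrak{e}}) = \mathrm{Out}(\mathfrak{e}) \simeq \mathrm{Out}(\mathbf{H}_1) \times \mathrm{Out}(\mathbf{H}_2)$ because $\dot{s}$ is $2$-torsion and has no non-trivial lifts inside its component; combined with Kaletha's invariance \cite[Proposition 5.6]{Kal} of $\Delta'[\dot{\mathfrak{e}}, {}^L \xi, \mathfrak{w}, \Xi, z]$ under automorphisms of the refined datum (together with the fact that Waldspurger's L-embedding ${}^L \xi$ is itself $\mathrm{Out}$-stable), this gives the desired invariance of the transfer factors directly, hence of $f'(h)dh$.

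The main obstacle is the remaining case $\mathbf{G}^* = \mathbf{SO}_{2n}^\alpha$, where $\mathrm{Out}(\mathbf{H}_1) \times \mathrm{Out}(\mathbf{H}_2)$ can be as large as $(\mathbb{Z}/2\mathbb{Z})^2$ but only the diagonal $\{1, (\theta_1, \theta_2)\}$ lies in $\mathrm{Out}(\dot{\mathfrak{e}})$ and hence is handled by Kaletha's invariance. For the remaining coset one has no abstract invariance statement and must instead perform an explicit computation on Waldspurger's formulas for the transfer factors in \cite{WaldFormulaire}. Here the pure-inner-form hypothesis $z \in Z^1(\R, \mathbf{G}^*)$ is essential: it realises $\mathbf{G}$ as a genuine special orthogonal group, so $\theta$ lifts to an $\R$-automorphism of $\mathbf{G}$ coming from an element of determinant $-1$, and Waldspurger's formulas can be inspected directly. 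One checks that $\Delta$ is invariant under the simultaneous action of $\theta$ on $\mathbf{G}$ and of the outstanding element of $(\mathrm{Out}(\mathbf{H}_1) \times \mathrm{Out}(\mathbf{H}_2))/\{1, (\theta_1, \theta_2)\}$ (or of $\mathrm{Out}(\mathbf{H})$ when $ab = 0$) on $\mathbf{H}$. Since $\Delta'$ differs from $\Delta$ only by inverting $\dot{s}$ (and by the harmless $\epsilon$-factor of \cite[§5.3]{KS}), the same invariance holds for $\Delta'$. The hypothesis that $f(g)dg$ has $\theta$-invariant orbital integrals in the even orthogonal case then forces the same property for $f'(h)dh$ under the remaining coset, completing the verification and the proof.
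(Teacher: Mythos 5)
Your proposal is correct and follows essentially the same route as the paper: reduce via Proposition \ref{prop:AJrigid} and the Arancibia-Moeglin-Renard identification of Adams-Johnson packets with Arthur packets to the invariance of the stable orbital integrals of $f'(h)dh$, and then verify that invariance case by case, using the crucial equality $\mathrm{Out}(\dot{\mathfrak{e}})=\mathrm{Out}(\mathfrak{e})$ together with Kaletha's \cite[Proposition 5.6]{Kal} in the symplectic case, and a direct inspection of Waldspurger's explicit transfer factor formulas (with the pure inner twist hypothesis and the $\theta$-invariance of orbital integrals of $f(g)dg$) for the remaining coset in the even orthogonal case. The only minor quibble is your parenthetical justification that $\mathrm{Out}(\dot{\mathfrak{e}})=\mathrm{Out}(\mathfrak{e})$ ``because $\dot{s}$ is $2$-torsion and has no non-trivial lifts inside its component'' — the paper simply asserts this equality without proof and your stated reason is not obviously the correct one, since $\dot{s}$ lives in $\widehat{\overline{\mathbf{G}}}=\mathrm{Spin}_{2n+1}(\C)$ where it need not be $2$-torsion; but this does not affect the substance of the argument.
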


\subsection{Trivial inner twists}
\label{sec:trivit}

In this section we denote by $F$ a local field of characteristic zero, and consider a quasi-split reductive group $\mathbf{G}^*$ over $F$ as in section \ref{sec:defgroups}, and a finite subgroup $\mathbf{Z}$ of the center of $\mathbf{G}^*$.
Consider also a rigid inner twist $(\mathbf{G}, \Xi, z)$ of $\mathbf{G}^*$.
Denote by $\overline{\mathbf{G}}^*$ and $\overline{\mathbf{G}}$ the quotients of $\mathbf{G}^*$ and $\mathbf{G}$ by $\mathbf{Z}$.
Assume that the image of $z$ in $H^1(F, \mathbf{G}^*_{\mathrm{ad}})$ is trivial.
For example this is what we obtain by localisation at a place $v \not\in S$ of a global rigid inner twist as in section \ref{sec:gri}.
Fix a Whittaker datum $\mathfrak{w}$ for $\mathbf{G}^*$.

If the class of $z$ in $H^1(u_F \rightarrow \mathcal{E}_F, \mathbf{Z} \rightarrow \mathbf{G}^*)$ is trivial, then $(\mathbf{G}, \Xi, z)$ is isomorphic to the trivial rigid inner twist $(\mathbf{G}^*, \mathrm{Id}, 1)$ and the isomorphism is unique up to $\mathbf{G}^*(F)$.
In particular $\mathbf{G}$ inherits a well-defined Whittaker datum $\mathfrak{w}'$ from $\mathfrak{w}$, and the absolute transfer factors $\Delta'[\dot{\mathfrak{e}}, {}^L \xi, \mathfrak{w}, \Xi, z]$ are equal to the absolute transfer factors $\Delta'[\mathfrak{e}, {}^L \xi, \mathfrak{w}']$ defined in \cite[§5.3]{KS}.
Therefore we can simply use the Arthur packets reviewed in section \ref{sec:Apackqs} for $\mathbf{G}$ and $\mathfrak{w}'$.

If however the class of $z$ in $H^1(u_F \rightarrow \mathcal{E}_F, \mathbf{Z} \rightarrow \mathbf{G}^*)$ is non-trivial, as can happen by Remark \ref{rema:probtriv}, then an isomorphism between $\mathbf{G}$ and $\mathbf{G}^*$ is only well-defined up to $\mathbf{G}^*_{\mathrm{ad}}(F)$, which does not preserve the $\mathbf{G}^*(F)$-orbit of $\mathfrak{w}$ in general.
Thus there is no natural choice of Whittaker datum for $\mathbf{G}$, and even if there was the corresponding absolute transfer factors would have no reason to coincide with $\Delta'[\dot{\mathfrak{e}}, {}^L \xi, \mathfrak{w}, \Xi, z]$.
We conclude that in this situation we cannot merely use the packets reviewed in section \ref{sec:Apackqs} along with the characters $\langle \cdot, \pi \rangle$ to get endoscopic character relations similar to equation \ref{eqn:endcharrelqs} but for the transfer factors $\Delta'[\dot{\mathfrak{e}}, {}^L \xi, \mathfrak{w}, \Xi, z]$.

Kaletha \cite[§6.3]{Kalrivsbg} computed the effect of twisting the cocycle $z \in Z^1(u_F \rightarrow \mathcal{E}_F, \mathbf{Z} \rightarrow \mathbf{G}^*)$ by an element of $Z^1(u_F \rightarrow \mathcal{E}_F, \mathbf{Z} \rightarrow \mathbf{Z}(\mathbf{G}^*))$ on the transfer factors $\Delta'[\dot{\mathfrak{e}}, {}^L \xi, \mathfrak{w}, \Xi, z]$ \cite[Lemma 6.3]{Kalrivsbg}, and deduced the effect on the spectral transfer factors $\langle \dot{s}, \pi \rangle$ \cite[Lemma 6.2]{Kalrivsbg}.
Let us recall how this applies to the present situation.
Up to replacing $(\mathbf{G}, \Xi, z)$ by an isomorphic rigid inner twist, we can assume that the image of $z$ in $Z^1(F, \mathbf{G}^*_{\mathrm{ad}})$ is trivial, i.e.\ that $z \in Z^1(u_F \rightarrow \mathcal{E}_F, \mathbf{Z} \rightarrow \mathbf{Z}(\mathbf{G}^*))$.
In particular, the isomorphism $\Xi : \mathbf{G}^*_{\overline{F}} \rightarrow \mathbf{G}_{\overline{F}}$ is defined over $F$.
Up to enlarging $\mathbf{Z}$ (as a finite subgroup of $\mathbf{Z}(\mathbf{G}^*)$), we can assume that $z \in Z^1(u_F \rightarrow \mathcal{E}_F, \mathbf{Z} \rightarrow \mathbf{Z})$.
The exact sequence $1 \rightarrow \mathbf{Z} \rightarrow \mathbf{G}^* \rightarrow \overline{\mathbf{G}}^* \rightarrow 1$ dualises as $1 \rightarrow \widehat{\mathbf{Z}} \rightarrow \widehat{\overline{\mathbf{G}}} \rightarrow \widehat{\mathbf{G}} \rightarrow 1$, where $\widehat{\mathbf{Z}}$ is naturally identified with $2 i \pi \mathbb{Z} \otimes X^*(\mathbf{Z})$.
Let $\psi : \mathrm{WD}_F \times \mathrm{SL}_2(\C) \rightarrow {}^L \mathbf{G}$ be an Arthur-Langlands parameter.
As in the real case, let $C_{\psi} = \mathrm{Cent}(\psi, \widehat{\mathbf{G}})$, $C_{\psi}^+$ its preimage in $\widehat{\overline{\mathbf{G}}}$ and $\mathcal{S}_{\psi}^+ = \pi_0(C_{\psi}^+)$.
Kaletha constructs a morphism $d : \mathcal{S}_{\psi}^+ \rightarrow Z^1(\mathrm{Gal}_F, \widehat{\mathbf{Z}})$.
By \cite[§6.2]{Kalrivsbg} there is a perfect pairing
\begin{equation} \label{eqn:pairingZ} Z^1(\mathrm{Gal}_F, \widehat{\mathbf{Z}}) \times H^1(u_F \rightarrow \mathcal{E}_F, \mathbf{Z} \rightarrow \mathbf{Z}) \rightarrow \C^{\times} \end{equation}
which we simply denote by $\langle \cdot, \cdot \rangle_{\mathbf{Z}}$.
These are the two constructions needed to state the existence of Arthur packets for $\mathbf{G}$.
To avoid confusion we will denote by $\Pi_{\psi}(\mathbf{G}^*)$ and $\theta^*$ what was denoted by $\Pi_{\psi}$ and $\theta$ in section \ref{sec:Apackqs}.
If $\mathbf{G}^* = \mathbf{SO}_{2n}^{\alpha}$, let $\theta = \Xi \circ \theta^* \circ \Xi^{-1}$.
Define a multiset $\Pi_{\psi}(\mathbf{G})$ of representations of $\mathbf{G}(F)$, or $\theta$-orbits of representations in the even orthogonal case, as $(\Xi^{-1})^*(\Pi_{\psi}(\mathbf{G}^*))$, and for $\pi \in \Pi_{\psi}(\mathbf{G})$ define a character $\langle \cdot, \pi \rangle_{\mathbf{G}}$ of $\mathcal{S}_{\psi}^+$ by
$$ \langle \dot{s}, \pi \rangle = \langle d (\dot{s})^{-1}, z \rangle_{\mathbf{Z}} \times \langle s, \pi \circ \Xi \rangle $$
for $\dot{s} \in \mathcal{S}_{\psi}^+$ and $s$ its image in $\mathcal{S}_{\psi}$.

Fix $\dot{s} \in C_{\psi}^+$, let $\mathfrak{e} = (\mathbf{H}, \mathcal{H}, \dot{s}, \xi)$ be the induced refined endoscopic datum, and choose an L-embedding ${}^L \xi : {}^L \mathbf{H} \rightarrow {}^L \mathbf{G}$ for this endoscopic datum.
As explained in section \ref{sec:ellenddat}, $\mathbf{H}$ decomposes as a product $\mathbf{H}_1 \times \mathbf{H}_2$.

\begin{prop} \label{prop:trivit}
Let $f(g)dg$ be a smooth compactly supported distribution on $\mathbf{G}(F)$.
In the even orthogonal case assume that the orbital integrals of $f(g)dg$ are invariant under $\theta$.
Let $f'(h)dh$ be a transfer of $f(g)dg$ with respect to the transfer factors $\Delta'[\dot{\mathfrak{e}}, {}^L \xi, \mathfrak{w}, \Xi, z]$.

Then the stable orbital integrals of $f'(h)dh$ are invariant under $\mathrm{Out}(\mathbf{H}_1) \times \mathrm{Out}(\mathbf{H}_2)$ and we have
\begin{equation} \sum_{\pi \in \Pi_{\psi}(\mathbf{G})} \langle \dot{s} s_{\psi}, \pi \rangle \mathrm{tr}\left(\pi(f(g)dg)\right) = \sum_{\pi' \in \Pi_{\psi'}(\mathbf{H})} \langle s_{\psi'}, \pi' \rangle \mathrm{tr}\left(\pi'(f'(h)dh)\right) \end{equation}
\end{prop}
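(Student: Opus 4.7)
The plan is to deduce the relation from Arthur's endoscopic character identity \ref{eqn:endcharrelqs} for the quasi-split pair $(\mathbf{G}^*, \mathrm{Id}, 1)$ together with the two computations of Kaletha, \cite[Lemma 6.3]{Kalrivsbg} for transfer factors and \cite[Lemma 6.2]{Kalrivsbg} for the spectral pairings, which are precisely the ingredients that produced the definition of $\langle \cdot, \pi \rangle_{\mathbf{G}}$ given just before the proposition. As indicated in the discussion preceding the proposition, we first pass to an isomorphic rigid inner twist and enlarge $\mathbf{Z}$ inside $\mathbf{Z}(\mathbf{G}^*)$ so that $z \in Z^1(u_F \rightarrow \mathcal{E}_F, \mathbf{Z} \rightarrow \mathbf{Z})$ and $\Xi$ is defined over $F$. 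Then $\Xi$ induces a topological isomorphism $\mathbf{G}^*(F) \simeq \mathbf{G}(F)$ and we may form the pushed-forward distribution $f^*(g)dg := f(\Xi(g))\,d\Xi(g)$ on $\mathbf{G}^*(F)$, which satisfies $\mathrm{tr}(\pi(f(g)dg)) = \mathrm{tr}((\pi \circ \Xi)(f^*(g)dg))$ for any representation $\pi$ of $\mathbf{G}(F)$.

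The heart of the argument is the comparison of transfer factors. By \cite[Lemma 6.3]{Kalrivsbg} applied with trivial base cocycle and twist $z \in Z^1(u_F \rightarrow \mathcal{E}_F, \mathbf{Z} \rightarrow \mathbf{Z}(\mathbf{G}^*))$, we have for matching pairs $(\gamma, \delta)$
\[
\Delta'[\dot{\mathfrak{e}}, {}^L \xi, \mathfrak{w}, \Xi, z](\gamma, \delta) = \langle d(\dot{s}), z \rangle_{\mathbf{Z}} \cdot \Delta'[\mathfrak{e}, {}^L \xi, \mathfrak{w}](\gamma, \Xi^{-1}(\delta)),
\]
where the right-hand transfer factor is the one used in section \ref{sec:Apackqs} for the quasi-split group. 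Consequently, if $f'(h)dh$ is a transfer of $f(g)dg$ with respect to $\Delta'[\dot{\mathfrak{e}}, {}^L \xi, \mathfrak{w}, \Xi, z]$, then $\langle d(\dot{s}), z \rangle_{\mathbf{Z}}^{-1} f'(h)dh$ is a transfer of $f^*(g)dg$ with respect to $\Delta'[\mathfrak{e}, {}^L \xi, \mathfrak{w}]$. The $\mathrm{Out}(\mathbf{H}_1) \times \mathrm{Out}(\mathbf{H}_2)$-invariance of its stable orbital integrals, under the stated hypothesis on $f(g)dg$ in the even orthogonal case, follows by the same case-by-case analysis of Waldspurger's explicit formulas used in the proof of Proposition \ref{prop:realAJArthur}: that analysis is purely at the level of transfer factors and is insensitive to which $z$ realizes the inner form, so it carries over verbatim to the present local setting.

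It now suffices to apply Arthur's identity \ref{eqn:endcharrelqs} to $f^*(g)dg$ and $\langle d(\dot{s}), z \rangle_{\mathbf{Z}}^{-1} f'(h)dh$, using the image $s \in \mathcal{S}_{\psi}$ of $\dot{s}$, to obtain
\[
\sum_{\pi^* \in \Pi_{\psi}(\mathbf{G}^*)} \langle s_{\psi} s, \pi^* \rangle \mathrm{tr}(\pi^*(f^*(g)dg)) = \langle d(\dot{s}), z \rangle_{\mathbf{Z}}^{-1} \sum_{\pi' \in \Pi_{\psi'}(\mathbf{H})} \langle s_{\psi'}, \pi' \rangle \mathrm{tr}(\pi'(f'(h)dh)).
\]
Multiplying by $\langle d(\dot{s}), z \rangle_{\mathbf{Z}}$, re-indexing the left side over $\pi = (\Xi^{-1})^*\pi^* \in \Pi_{\psi}(\mathbf{G})$, and using that $s_{\psi}$ lifts canonically to $C_{\psi}^+$ with trivial image under $d$ (as it lies in the image of the simply connected group $\mathrm{SL}_2(\C)$, hence maps trivially to $\widehat{\mathbf{Z}}$), the defining formula
\[
\langle \dot{s}\, s_{\psi}, \pi \rangle = \langle d(\dot{s})^{-1}, z \rangle_{\mathbf{Z}} \cdot \langle s s_{\psi}, \pi \circ \Xi \rangle
\]
transforms the left-hand side into $\sum_{\pi \in \Pi_{\psi}(\mathbf{G})} \langle \dot{s}\, s_{\psi}, \pi \rangle \mathrm{tr}(\pi(f(g)dg))$, yielding the proposition.

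The only point requiring genuine verification, rather than a direct appeal to \cite{Kalrivsbg} and \cite{Arthur}, is the $\mathrm{Out}(\mathbf{H}_1) \times \mathrm{Out}(\mathbf{H}_2)$-invariance at a non-archimedean place in the $\mathbf{SO}_{2n}^{\alpha}$ case, where one must check that Waldspurger's formulas for $\Delta$ (and hence for $\Delta'$, which differs only by replacing $\dot{s}$ by $\dot{s}^{-1}$) are symmetric under the simultaneous action of $\theta$ on $\mathbf{G}$ and of the appropriate outer automorphisms on $\mathbf{H}$. This is the same calculation carried out in the archimedean case for Proposition \ref{prop:realAJArthur}, and constitutes the principal technical obstacle; all other manipulations are formal consequences of Kaletha's comparison lemmas and the definition of $\langle \cdot, \pi \rangle_{\mathbf{G}}$.
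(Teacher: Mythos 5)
Your proposal is correct and follows essentially the same route as the paper, which simply cites \cite[Lemma 6.2]{Kalrivsbg} for the character relation (your argument unpacks exactly what that lemma does, consistently with the definition of $\langle \cdot, \pi \rangle$ given before the proposition) and treats the $\mathrm{Out}(\mathbf{H}_1) \times \mathrm{Out}(\mathbf{H}_2)$-invariance separately. One simplification you miss: your own displayed comparison shows that $\Delta'[\dot{\mathfrak{e}}, {}^L \xi, \mathfrak{w}, \Xi, z]$ is a \emph{constant} multiple of the quasi-split transfer factor, so the invariance of the stable orbital integrals of $f'(h)dh$ follows at once from the known quasi-split case (it is a property of the relative transfer factors), and there is no need to revisit Waldspurger's explicit formulas at non-archimedean places --- this is precisely the ``more simply'' remark in the paper's proof.
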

\begin{proof}
This is a special case of \cite[Lemma 6.2]{Kalrivsbg}, except for the invariance under $\mathrm{Out}(\mathbf{H}_1) \times \mathrm{Out}(\mathbf{H}_2)$ of the stable orbital integrals of $f'(h)dh$.
The corresponding invariance property for the transfer factors $\Delta'[\dot{\mathfrak{e}}, {}^L \xi, \mathfrak{w}, \Xi, z]$ can be proved as in Proposition \ref{prop:realAJArthur}.
More simply, it can be deduced from the similar invariance property for the transfer factors for $\mathbf{G}^*$, since it is a property of the \emph{relative} transfer factors.
\end{proof}

\begin{rema} \label{rema:trivit}
By Remark \ref{rema:probtriv} in the global setting of section \ref{sec:gri} the only case where we cannot reduce to $z=1$ at any finite place is when $\mathbf{G}^* = \mathbf{SO}_{2n}^{\alpha}$.
In this case the restriction of $z \in Z^1(u_F \rightarrow \mathcal{E}_F, \mathbf{Z} \rightarrow \mathbf{Z})$ to $u_F$ is trivial, that is $z \in Z^1(F, \mathbf{Z})$.
Instead of the pairing \ref{eqn:pairingZ} of \cite[§§6.1-2]{Kalrivsbg} one can use the usual Poitou-Tate pairing
$$ H^1(\mathrm{Gal}_F, \widehat{\mathbf{Z}}) \times H^1(F, \mathbf{Z}) \rightarrow \C^{\times} $$
and the characters $\langle \dot{s}, \pi \rangle$ defined above for $\pi \in \Pi_{\psi}(\mathbf{G})$ descend to characters of $\pi_0(C_{\psi})$, which is always a $2$-group, in particular it is abelian.
\end{rema}

\subsection{Adèlic Arthur packets for rigid inner forms}
\label{sec:adelicApack}

Let us go back to the setting of section \ref{sec:gri}: $F$ is a totally real number field, $\mathbf{G}^*$ one of the quasi-split reductive groups over $F$ defined in section \ref{sec:defgroups}.
Let $\mathbf{Z}$ be the center of $\mathbf{G}^*$ if $\mathbf{G}^*$ is symplectic.
If $\mathbf{G}^*$ is orthogonal we let $\mathbf{Z}=1$.
Let $S$ be a non-empty set of real places of $F$, and consider a rigid inner twist $(\mathbf{G}, \Xi, z)$ of $\mathbf{G}^*$ satisfying property $\star$ with respect to $S$, where $z \in Z^1(P_{F, \dot{V}} \rightarrow \mathcal{E}_{F, \dot{V}}, \mathbf{Z} \rightarrow \mathbf{G}^*)$.
Such rigid inner twists were classified in Proposition \ref{prop:gri}.
Note that if $\mathbf{G}^*$ is orthogonal $z$ is simply an element of $Z^1(F, \mathbf{G}^*)$.
Fix also a global Whittaker datum $\mathfrak{w}$ for $\mathbf{G}^*$.

For any place $v$ of $F$ we get a rigid inner twist $(\mathbf{G}_{F_v}, \Xi_v, z_v)$ of $\mathbf{G}^*_{F_v}$, where
$$ z_v \in \begin{cases} Z^1(F_v, \mathbf{G}^*_{F_v}) & \text{ if } \mathbf{G}^* \text{ is orthogonal or } v \not\in S, \\ Z^1(u_v \rightarrow \mathcal{E}_{F_v}, \mathbf{Z} \rightarrow \mathbf{G}^*_{F_v}) & \text{ if } \mathbf{G}^* \text{ is symplectic and } v \in S, \end{cases} $$
is the localisation of $z$ at $v$ and $\Xi_v = \Xi_{\overline{F}_v}$.
We also get a Whittaker datum $\mathfrak{w}_v$ for $\mathbf{G}^*_{F_v}$.

Recall that $\mathbf{G}^*$ (resp.\ $\mathbf{G}$) admits a model over $\mathcal{O}_F[\frac{1}{N}]$ for some $N \geq 1$, and that up to replacing $N$ by a multiple two such models are canonically isomorphic.
Furthermore, there exists $N$ such that $\Xi$ is defined over a finite étale extension of $\mathcal{O}_F[\frac{1}{N}]$.
For almost all places $v$, $z_v$ is the inflation of an unramified cocycle, and since $H^1(\mathcal{O}_{F_v}, \mathbf{G}^*)$ is trivial we get an isomorphism $\Xi'_v$ between the models of $\mathbf{G}^*$ and $\mathbf{G}$ over $\mathcal{O}_{F_v}$, well-defined up to the image of $\mathbf{G}^*(\mathcal{O}_{F_v})$ in $\mathbf{G}^*_{\mathrm{ad}}(\mathcal{O}_{F_v})$.
Finally, at almost all places $v$ the Whittaker datum $\mathfrak{w}_v$ is compatible with the choice of hyperspecial maximal compact subgroup $\mathbf{G}^*(\mathcal{O}_{F_v})$ of $\mathbf{G}^*(F_v)$ (in the sense of \cite{CasSha}), and this compatibility can be pulled back to $\mathbf{G}$ via $(\Xi'_v)^{-1}$.
At the finite set of places not in $S$ where $v$ is Archimedean or $\mathbf{G}$ is ramified or $z_v$ is ramified, there exists $g \in \mathbf{G}^*(\overline{F_v})$ such that $\Xi_v' := \mathrm{Ad}(g) \circ \Xi_v$ is defined over $F_v$, and $\Xi_v'$ is well-defined up to $\mathrm{G}^*_{\mathrm{ad}}(F_v)$.
If the image of $z_v \in H^1(F_v, \mathbf{G}^*)$ is trivial we can further impose that $z_v(\sigma) = g \sigma(g)^{-1}$ for all $\sigma \in \mathrm{Gal}_{F_v}$, and $\Xi_v$ is then well-defined up to the image of $\mathbf{G}^*(F_v)$ in $\mathbf{G}^*_{\mathrm{ad}}(F_v)$.

If $\mathbf{G}^* = \mathbf{SO}_{2n}^{\alpha}$, let $\theta$ be an outer automorphism of $\mathbf{G}$ that is realised by an element having determinant $-1$ in the corresponding orthogonal group.
Choose an automorphism $\theta^*$ of $\mathbf{G}^*$ similarly.
Denote by $\theta_v$ and $\theta_v^*$ the induced automorphisms of $\mathbf{G}_{F_v}$ and $\mathbf{G}^*_{F_v}$.
At all places $v \not\in S$, the automorphisms $\theta_v$ and $\Xi'_v \circ \theta^*_v \circ (\Xi'_v)^{-1}$ of $\mathbf{G}$ are in the same orbit under $\mathbf{G}(F_v)$.

Let $\Psi_{\mathrm{disc}}^{S-\mathrm{AJ}}(\mathbf{G}^*) \subset \Psi_{\mathrm{disc}}(\mathbf{G}^*)$ be the set of $\dot{\psi}$ such that for any $v \in S$, the localisation $\dot{\psi}_v$ is an Adams-Johnson parameter as in section \ref{sec:AJ}.
Let $\dot{\psi} \in \Psi_{\mathrm{disc}}^{S-\mathrm{AJ}}(\mathbf{G}^*)$,
To formulate the multiplicity formula in the next section, we need to define a packet $\Pi_{\dot{\psi}}(\mathbf{G})$ of admissible representations of $\mathbf{G}(\A_F)$, along with a mapping $\Pi_{\dot{\psi}}(\mathbf{G}) \rightarrow (\mathcal{S}_{\psi})^{\vee}$, using the local Arthur packets at our disposal (Propositions \ref{prop:realAJArthur} and \ref{prop:trivit}).
This was done by Kaletha \cite[Proposition 4.2]{Kalglob} in greater generality.
However, as our current setting is slightly different from Kaletha's (we did not lift the cocycles to $\mathbf{G}_{\mathrm{sc}}^*$ as Kaletha did) and simpler than the general case since the Hasse principle holds for all the groups considered in this paper, it is worth recalling how Kaletha's construction specialises to the cases at hand.

Let $\Pi_{\dot{\psi}}(\mathbf{G})$ be the set of restricted tensor products $\pi = \otimes'_v \pi_v$ where for all places $v$ of $F$, $\pi_v \in \Pi_{\dot{\psi}_v}(\mathbf{G}(F_v))$ if $v \not\in S$ and $\pi_v \in \Pi_{\dot{\psi}_v}^{\mathrm{AJ}}(\mathbf{G})$ if $v \in S$, and for almost all places $v$ the character $\langle \cdot, \pi_v \rangle$ is trivial.
For any such $\pi$ let us define a character $\langle \cdot, \pi \rangle$ of $\mathcal{S}_{\dot{\psi}}$.
Recall that $\mathcal{S}_{\dot{\psi}} = C_{\dot{\psi}} / Z(\widehat{\mathbf{G}})^{\mathrm{Gal}_F}$ and that $C_{\dot{\psi}}^+$ denotes the preimage of $C_{\dot{\psi}}$ in $\widehat{\overline{\mathbf{G}}}$.
Thus it is equivalent to define a character on $C_{\dot{\psi}}^+$ which is trivial on $Z(\widehat{\overline{\mathbf{G}}})^+$.
Note that in general each $\langle \cdot, \pi_v \rangle$ is a character of $\mathcal{S}_{\dot{\psi}_v}^+$ in the ``representation of finite groups'' sense, but as we observed in Remark \ref{rema:trivit}, in our setting it is simply a morphism $\mathcal{S}_{\dot{\psi}_v}^+ \rightarrow \C^{\times}$.
Its restriction $\chi_v$ to $\pi_0(Z(\widehat{\overline{\mathbf{G}}})^{+,v})$ is the image of $z$ via \cite[Corollary 5.4]{Kal}.
Thanks to the natural morphisms $\mathcal{S}_{\dot{\psi}}^+ \rightarrow \mathcal{S}_{\dot{\psi}_v}^+$ reviewed in section \ref{sec:localisationpsi} we can define $\langle \cdot, \pi \rangle = \prod_v \langle \cdot, \pi_v \rangle|_{\mathcal{S}_{\dot{\psi}}^+}$.
By \cite[Corollary 3.45]{Kalglob} we have $\prod_v \chi_v|_{\pi_0\left(Z(\widehat{\overline{\mathbf{G}}})^+\right)} = 1$ and so $\langle \cdot, \pi \rangle$ is trivial on $\pi_0\left(Z(\widehat{\overline{\mathbf{G}}})^+\right)$ and descends to a character of $\mathcal{S}_{\dot{\psi}}$.

For $s \in \mathcal{S}_{\dot{\psi}}$ let $\mathfrak{e} = (\mathbf{H}, \mathcal{H}, s, \xi)$ be the induced elliptic endoscopic datum.
Sensu stricto it depends on the choice of a lift of $s$ in $Z(\widehat{\mathbf{G}}) C_{\dot{\psi}}$, but multiplication by an element of $Z(\widehat{\mathbf{G}})$ does not change the isomorphism class of $\mathfrak{e}$.
Fix a compatible L-embedding ${}^L \xi : {}^L \mathbf{H} \rightarrow {}^L \mathbf{G}$ as in section \ref{sec:ellenddat}, yielding $\dot{\psi}' \in \Psi_{\mathbf{G}-\mathrm{disc}}(\mathbf{H})$ such that $\dot{\psi} = {}^L \xi \circ \dot{\psi}'$ as reviewed in section \ref{sec:subsALparam}.
For any choice of $\dot{s} \in \mathcal{S}_{\dot{\psi}}^+$ above $s$ we get local refined endoscopic data $\dot{\mathfrak{e}}_v$ above $\mathfrak{e}_v$, and thus absolute transfer factors $\Delta'[\dot{\mathfrak{e}}_v, {}^L \xi, \mathbf{G}, \Xi_v, z_v]$.
By \cite[Proposition 4.1]{Kalglob} the product of these local transfer factors is the \emph{canonical} adélic absolute transfer factor $\Delta'[\mathfrak{e}, {}^L \xi, \mathbf{G}, \Xi]$.
Recall that this transfer factor does not depend on the choices of $z$ and $\mathfrak{w}$, nor on the choice of $\dot{s}$ above $s$.
Let $f(g)dg = \prod_v f_v(g_v)dg_v$ be a smooth, compactly supported distribution on $\mathbf{G}(\A_F)$ such that for any place $v$, the orbital integrals of $f_v(g_v)dg_v$ are $\theta_v$-invariant.
Let $f'(h)dh = \prod_v f'(h_v)dh_v$ be a transfer of $f(g)dg$ to $\mathbf{H} = \mathbf{H}_1 \times \mathbf{H}_2$.
Then the stable orbital integrals of $f'(h)dh$ are invariant under $\mathrm{Out}(\mathbf{H}_{1,F_v}) \times \mathrm{Out}(\mathbf{H}_{2,F_v})$ for all places $v$, and we have the endoscopic character relation
\begin{equation} \label{eqn:globalcharrel} \sum_{\pi \in \Pi_{\dot{\psi}}(\mathbf{G})} \langle s s_{\dot{\psi}}, \pi \rangle \mathrm{tr}\left(\pi(f(g)dg)\right) = \sum_{\pi' \in \Pi_{\dot{\psi}'}(\mathbf{H})} \langle s_{\dot{\psi}'}, \pi' \rangle \mathrm{tr}\left(\pi'(f'(h)dh)\right) \end{equation}
where the packet $\Pi_{\dot{\psi}'}(\mathbf{H})$ on the right hand side is the one defined by Arthur in \cite[(1.5.3)]{Arthur}.
This relation is an immediate consequence of Propositions \ref{prop:realAJArthur} and \ref{prop:trivit} and the well-known product formula $\prod_v e(\mathbf{G}_{F_v}) = 1$ for the Kottwitz signs.
Note that $e(\mathbf{G}_{F_v}) = 1$ for any finite place $v$ because $\mathbf{G}_{F_v}$ is quasi-split.

\section{Arthur's multiplicity formula for certain inner forms}

We continue with the definitions and notations of section \ref{sec:adelicApack}.
For any Archimedean place $v$ of $F$, fix a maximal compact subgroup $K_v$ of $\mathbf{G}(F_v)$ and denote by $\mathfrak{g}_v$ the reductive complex Lie algebra $\C \otimes_{\R} \mathrm{Lie} \left(\mathrm{Res}_{F_v / \R} (\mathbf{G}_{F_v}) \right)$.
Let $\mathcal{A}(\mathbf{G})$ denote the space of automorphic forms for $\mathbf{G}$ in the sense of \cite{BoJaCorv}, with respect to $\prod_{v | \infty} K_v$.
Let $\mathcal{A}^2(\mathbf{G})$ denote the subspace of square-integrable automorphic forms.
It decomposes as a countable orthogonal direct sum of admissible irreducible unitary representations $\pi = \otimes'_v \pi_v$ of $\mathbf{G}(\A_F)$:
$$ \mathcal{A}^2(\mathbf{G}) \simeq \bigoplus_{\pi} \pi^{\oplus m(\pi)}. $$
Note that for $v$ Archimedean $\pi_v$ is really an irreducible unitary $(\mathfrak{g}_v, K_v)$-module, and our use of the term ``representation'' is somewhat abusive.
For any Archimedean place $v$ of $F$ the action of the center $\mathcal{Z}(\mathfrak{g}_v)$ of the enveloping algebra of $\mathfrak{g}_v$ on $\mathcal{A}^2(\mathbf{G})$ is semi-simple, since $\mathcal{Z}(\mathfrak{g}_v)$ acts on $\pi_v$ via the infinitesimal character $\nu_{\pi_v} : \mathcal{Z}(\mathfrak{g}_v) \rightarrow \C$.
Recall that via the Harish-Chandra isomorphism such characters are in bijection with semi-simple conjugacy classes in the complex Lie algebra $\widehat{\mathfrak{g}_v}$ dual to $\mathfrak{g}_v$.
These correspond naturally to semi-simple conjugacy classes in the Lie algebra of the complex Lie group $\widehat{\mathbf{G}}$ if $v$ is real, and pairs of such conjugacy classes if $v$ is complex.
For an infinitesimal character $\nu = (\nu_v)_{v | \infty}$, denote by $\mathcal{A}^2(\mathbf{G}, \nu)$ the subspace of $\mathcal{A}^2(\mathbf{G})$ on which $\mathcal{Z}(\mathfrak{g}_v)$ acts by $\nu_v$ for all Archimedean $v$.
If $\mathbf{G}$ is even orthogonal, we will also consider the $\prod_{v | \infty}\mathrm{Out}(\mathbf{G}_{F_v})$-orbit $\tilde{\nu}$ of $\nu$ instead, and $\mathcal{A}^2(\mathbf{G}, \tilde{\nu})$ will denote the direct sum of the subspaces $\mathcal{A}^2(\mathbf{G}, \nu')$ for $\nu'$ in the orbit of $\nu$.

\begin{theo} \label{theo:main}
Let $f(g)dg = \prod_v f_v(g_v)dg_v$ be a smooth, compactly supported distribution on $\mathbf{G}(\A_F)$ such that for any Archimedean place $v$ of $F$, the function $f_v$ is bi-$K_v$-finite.
If $\mathbf{G}$ is even orthogonal, assume also that for any place $v$ of $F$ the orbital integrals of $f_v(g_v)dg_v$ are $\theta_v$-invariant.
Let $\nu = (\nu_v)_{v | \infty}$ be an infinitesimal character as above, and denote by $\tilde{\nu}$ its $\prod_{v | \infty}\mathrm{Out}(\mathbf{G}_{F_v})$-orbit, which only consists of $\nu$ if $\mathbf{G}$ is symplectic or odd orthogonal.
Assume that for all $v$ in the non-empty set $S$, the infinitesimal character $\nu_v$ is algebraic regular.
Denote by $\Psi_{\mathrm{disc}}(\mathbf{G}, \tilde{\nu})$ the subset of $\Psi_{\mathrm{disc}}(\mathbf{G})$ consisting of those $\dot{\psi}$ such that for all Archimedean $v$, the infinitesimal character of $\dot{\psi}_v$ belongs to $\tilde{\nu}_v$.

Then for all $\dot{\psi} \in \Psi_{\mathrm{disc}}(\mathbf{G}, \tilde{\nu})$ and $v \in S$, $\dot{\psi}_v$ is an Adams-Johnson parameter in the sense of section \ref{sec:AJ}, and
\begin{equation} \label{eqn:main}
\mathrm{tr}\left( f(g)dg\,\middle|\,\mathcal{A}^2(\mathbf{G}, \tilde{\nu}) \right) = \sum_{\substack{\dot{\psi} \in \Psi_{\mathrm{disc}}(\mathbf{G}, \tilde{\nu}) \\ \text{up to } \widehat{\mathbf{G}}-\mathrm{conj}}} \  \sum_{\substack{\pi \in \Pi_{\dot{\psi}}(\mathbf{G}) \\ \text{s.t.\ } \langle \cdot, \pi \rangle = \epsilon_{\psi}}} \mathrm{tr}(\pi(f(g)dg))
\end{equation}
where $\epsilon_{\psi}$ is the character of $\mathcal{S}_{\psi}$ defined by Arthur in \cite[(1.5.6)]{Arthur}.
\end{theo}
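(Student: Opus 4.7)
The plan is to apply the stabilisation of Arthur's invariant trace formula for $\mathbf{G}$ and to compare the resulting expression to Arthur's multiplicity formula for the quasi-split elliptic endoscopic groups of $\mathbf{G}$. Let $I^{\mathbf{G}}_{\mathrm{disc}, \tilde{\nu}}(f)$ denote the discrete part of the spectral side of the invariant trace formula for $\mathbf{G}$, restricted to archimedean infinitesimal characters in the orbit $\tilde{\nu}$. As emphasised in the introduction, the crucial consequence of the algebraic regular hypothesis at $S$ combined with the presence of discrete series at each $v \in S$ is that $I^{\mathbf{G}}_{\mathrm{disc}, \tilde{\nu}}(f)$ reduces to $\mathrm{tr}(f\mid \mathcal{A}^2(\mathbf{G}, \tilde{\nu}))$: all other contributions to $I^{\mathbf{G}}_{\mathrm{disc}}$ come from parameters associated to proper Levi subgroups of $\mathbf{G}$, whose archimedean infinitesimal character at a place admitting discrete series is necessarily singular.

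Next, the stabilisation of $I^{\mathbf{G}}_{\mathrm{disc}, \tilde{\nu}}$, combined with the product formula for adélic transfer factors \cite[Proposition 4.1]{Kalglob}, gives
\[
I^{\mathbf{G}}_{\mathrm{disc}, \tilde{\nu}}(f) = \sum_{\mathfrak{e}} \iota(\mathbf{G}, \mathbf{H}) \, S^{\mathbf{H}}_{\mathrm{disc}, \tilde{\nu}}(f^{\mathfrak{e}}),
\]
where the sum runs over isomorphism classes of elliptic endoscopic data $\mathfrak{e} = (\mathbf{H}, \mathcal{H}, s, \xi)$ for $\mathbf{G}$, and $f^{\mathfrak{e}}$ is an adélic transfer of $f$ to $\mathbf{H}$ computed place-by-place using Kaletha's normalisations. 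Each $\mathbf{H}$ is a product of two quasi-split groups as in Section~\ref{sec:defgroups}, so Arthur's Theorem 1.5.2 expresses $S^{\mathbf{H}}_{\mathrm{disc}, \tilde{\nu}}(f^{\mathfrak{e}})$ as a sum over $\dot{\psi}' \in \Psi_{\mathrm{disc}}(\mathbf{H})$ (with archimedean infinitesimal character in the pullback of $\tilde{\nu}$) of a stable linear form built from Arthur packets at each place; the same archimedean argument rules out non-discrete parameters here too.

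Now reorganise the resulting double sum via Proposition~\ref{prop:endobij}, which puts pairs $(\mathfrak{e}, \dot{\psi}')$ in bijection with $\widehat{\mathbf{G}}$-conjugacy classes of pairs $(\dot{\psi}, s)$ with $\dot{\psi} \in \Psi_{\mathrm{disc}}(\mathbf{G}, \tilde{\nu})$ and $s \in \mathcal{S}_{\dot{\psi}}$; the usual count of automorphisms of $\mathfrak{e}$ converts $\iota(\mathbf{G}, \mathbf{H}) |\mathcal{S}_{\dot{\psi}'}|^{-1}$ into $|\mathcal{S}_{\dot{\psi}}|^{-1}$. Applying the adélic endoscopic character relation~(\ref{eqn:globalcharrel}) at each term replaces the inner sum over $\pi' \in \Pi_{\dot{\psi}'}(\mathbf{H})$ by a sum over $\pi \in \Pi_{\dot{\psi}}(\mathbf{G})$ weighted by $\langle s s_{\dot{\psi}}, \pi \rangle$, while Arthur's stable form produces the sign character $\epsilon_{\dot{\psi}}(s)$. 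After interchanging the inner summations, Fourier inversion on the finite abelian $2$-group $\mathcal{S}_{\dot{\psi}}$ singles out precisely those $\pi \in \Pi_{\dot{\psi}}(\mathbf{G})$ with $\langle \cdot, \pi \rangle = \epsilon_{\dot{\psi}}$, delivering~(\ref{eqn:main}).

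The main obstacles I anticipate are: (i) the archimedean vanishing argument eliminating non-discrete contributions on both sides of the stabilisation, which requires a careful analysis of the infinitesimal characters of parameters coming from proper Levi subgroups at real places admitting discrete series; (ii) in the even orthogonal case, checking that every step is compatible with the restriction to the Hecke subalgebra $\widetilde{\mathcal{H}}(\mathbf{G})$ and with projection onto the full $\prod_{v\mid\infty}\mathrm{Out}(\mathbf{G}_{F_v})$-orbit $\tilde{\nu}$, so that $\theta$-invariance is preserved under transfer at every place and the outer-automorphism ambiguity on the endoscopic side is harmless; and (iii) verifying that the compatibility results established in Sections~\ref{sec:Apacketsreal}--\ref{sec:trivit}, together with the work of Arancibia-Moeglin-Renard~\cite{AMR} at the places in $S$, coherently assemble into the single adélic identity~(\ref{eqn:globalcharrel}) used in the reduction, in particular at finite places where the local cocycle $z_v$ is forced to be non-trivial by Remark~\ref{rema:probtriv}.
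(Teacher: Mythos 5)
Your proposal follows exactly the same route as the paper: stabilisation of the discrete part of the trace formula, refinement by infinitesimal character, elimination of non-discrete contributions via the algebraic regular hypothesis at $S$, reorganisation of the double sum via Proposition~\ref{prop:endobij}, the global endoscopic character relation~(\ref{eqn:globalcharrel}), and Fourier inversion on $\mathcal{S}_{\dot{\psi}}$. One slip worth flagging: the expansion of $S^{\mathbf{H}}_{\mathrm{disc}}$ as a sum of stable linear forms over parameters is Arthur's \emph{stable} multiplicity formula (Theorem 4.1.2, together with Lemma~3.3.1 and Proposition~3.4.1), not Theorem~1.5.2, which is the ordinary multiplicity formula and plays no direct role here; you also need \cite[Lemma~4.4.1]{Arthur} to convert $\epsilon_{\psi'}(s_{\psi'})$ into $\epsilon_{\psi}(ss_{\psi})$, and to justify the singularity of Levi contributions at archimedean places the paper appeals to Clozel's purity lemma to show that all localisations at $S$ are Adams--Johnson and hence discrete, rather than to a discrete-series argument at those places.
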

\begin{proof}
We use the stabilisation of the trace formula for $\mathbf{G}$ (\cite[(3.2.3)]{Arthur}, proved in \cite{ArthurSTF1}, \cite{ArthurSTF2}, \cite{ArthurSTF3}.
This technique is hardly new: using the stabilisation of the trace formula for this purpose was initiated in \cite{LabLan} for the group $\mathbf{SL}_2$ and its inner forms.
We benefited from reading Kottwitz' conjectural but very general stabilisation of the cuspidal tempered part of the spectral side of the trace formula \cite{KottSTFcusp}, and of course Arthur's book \cite{Arthur}.

We refer the reader to \cite[§4]{ArthurITF} and \cite[§3.1]{Arthur} for details concerning the spectral side of the trace formula.
For each Archimedean place $v$ of $F$, choose a Cartan subalgebra $\widehat{\mathfrak{h}_v}$ of the complex Lie algebra $\widehat{\mathfrak{g}_v}$ dual to $\mathfrak{g}_v$.
Note that $\widehat{\mathfrak{h}_v}$ has a natural integral structure since it is the Lie algebra of a maximal torus in the dual group of $\mathrm{Res}_{F_v / \R}(\mathbf{G}_{F_v})$.
In particular it has a real structure, and we can choose a Euclidean norm on $\mathrm{Re}(\widehat{\mathfrak{h}_v})$ invariant under the Weyl group, yielding an invariant norm $|| \cdot ||_v$ on $\widehat{\mathfrak{h}_v}$.
Clearly we can also take this norm to be invariant under outer automorphisms.
The discrete part of the spectral side of the trace formula for $\mathbf{G}$ is \emph{formally} the sum over $t \in \R_{\geq 0}$ of $I_{\mathrm{disc}, t}^{\mathbf{G}}(f(g)dg)$, and following Arthur we considers these terms individually.
Fix a finite set $T$ of irreducible continuous representations of $\prod_{v | \infty} K_v$ and a compact open subgroup $U$ of $\mathbf{G}(\A_{F, f})$.
By \cite[Lemma 4.1]{ArthurITF} the linear form $I_{\mathrm{disc}, t}^{\mathbf{G}}$ on the space of smooth compactly supported distributions which are bi-$U$-invariant and bi-$\left(\prod_{v | \infty} K_v\right)$-finite with type in $T$ is a linear combination of traces in irreducible unitary admissible representations $\pi = \otimes'_v \pi_v$ of $\mathbf{G}(\A_F)$ such that $\sum_{v | \infty} ||\mathrm{Im}(\nu_{\pi_v})||_v = t$.
We stress that in general $I_{\mathrm{disc}, t}^{\mathbf{G}}(f(g)dg)$ is \emph{not} simply the sum of $m(\pi) \mathrm{tr}(\pi(f(g)dg))$ over the set of discrete automorphic representations $\pi$ for $\mathbf{G}$ such that $\sum_{v | \infty} ||\mathrm{Im}(\nu_{\pi_v})||_v = t$, as there are additional terms indexed by proper Levi subgroups of $\mathbf{G}$ containing a given minimal one: see \cite[3.1.1]{Arthur}.
These additional representations of $\mathbf{G}(\A_F)$ contributing to $I_{\mathrm{disc}, t}^{\mathbf{G}}(f(g)dg)$, however, have non-regular infinitesimal character at all Archimedean places of $F$.

The stabilisation of the discrete part of the trace formula is the equality
\begin{equation} \label{eqn:STF}
 I_{\mathrm{disc}, t}^{\mathbf{G}}(f(g)dg) = \sum_{\mathfrak{e}} \iota(\mathfrak{e}) S_{\mathrm{disc}, t}^{\mathbf{H}}(f'(h)dh)
\end{equation}
where the sum is over the isomorphism classes of elliptic endoscopic data $\mathfrak{e} = (\mathbf{H}, \mathcal{H}, s, \xi)$ for $\mathbf{G}$, and $\iota(\mathfrak{e}) = \tau(\mathbf{G}) \tau(\mathbf{H})^{-1} |\mathrm{Out}(\mathfrak{e})|^{-1}$.
For any such isomorphism class a representative $\mathfrak{e}$ is chosen, along with a compatible L-embedding ${}^L \xi : {}^L \mathbf{H} \rightarrow {}^L \mathbf{G}$.
As explained before this yields canonical adélic transfer factors, and the smooth, compactly supported distribution $f'(h)dh$ on the right hand side of \ref{eqn:STF} can be any transfer of $f(g)dg$.
It is possible to choose $\prod_{v | \infty } f'_v(h_v)$ to be bi-finite for any given maximal compact subgroup of $\mathbf{H}(\R \otimes_{\Q} F)$.
The right hand side makes sense because Arthur has shown that the linear forms $f'(h)dh \mapsto S_{\mathrm{disc}, t}^{\mathbf{H}}(f'(h)dh)$, defined inductively by the equality \ref{eqn:STF} with $\mathbf{G}$ replaced by $\mathbf{H}$, are stable.
Note that there is only a finite number of isomorphism classes of elliptic endoscopic data unramified outside a finite set of places of $F$ by \cite[Lemme 8.12]{Langlandsdebuts}, and combined with \cite[Proposition 7.5]{KottEllSing} this implies that there are only finitely many non-zero terms in the sum on the right hand side of \ref{eqn:STF}.

Arthur gives a spectral expansion for the terms $S_{\mathrm{disc}, t}^{\mathbf{H}}(f'(h)dh)$, under the assumption that the stable orbital integrals of $f'(h)dh$ are invariant under $\mathrm{Out}(\mathbf{H}_{1,F_v}) \times \mathrm{Out}(\mathbf{H}_{2,F_v})$ at all places $v$ of $F$.
By \cite[Lemma 3.3.1, Proposition 3.4.1 and Theorem 4.1.2]{Arthur},
$$ S_{\mathrm{disc}, t}^{\mathbf{H}}(f'(h)dh) = \sum_{\substack{\psi' \in \widetilde{\Psi}(\mathbf{H}) \\ \text{s.t. } t(\psi') = t}} m_{\psi'} |\mathcal{S}_{\psi'}|^{-1} \sigma(\overline{S}^0_{\psi'}) \epsilon_{\psi'}(s_{\psi'}) \Lambda_{\psi'}(f'(h)dh). $$
Here $\Lambda_{\psi'} = \prod_v \Lambda_{\psi'_v}$ is the stable linear form associated to $\psi' = (\psi'_1, \psi'_2)$ (see \ref{eqn:defstable}), and $\widetilde{\Psi}(\mathbf{H}) = \widetilde{\Psi}(\mathbf{H}_1)  \times \widetilde{\Psi}(\mathbf{H}_2)$.
In general the set $\widetilde{\Psi}(\mathbf{H}_i)$ properly contains the set of \emph{discrete} parameters $\widetilde{\Psi}_{\mathrm{disc}}(\mathbf{H}_i)$ reviewed in section \ref{sec:subsALparam}.
These substitutes for non-discrete Arthur-Langlands parameters are also defined as formal unordered direct sums $\psi'_i = \boxplus_j \pi_{i,j} [d_{i,j}]^{\boxplus l_{i,j}}$, where $\pi_{i,j}$ is a unitary automorphic cuspidal representation of $\mathbf{GL}_{n_{i,j}} / F$, $d_{i,j} \geq 1$, $l_{i,j} \geq 1$, the pairs $(\pi_{i,j}, d_{i,j})$ are distinct, and if $\pi_{i,j}$ is not self-dual then there is a unique index $k$ such that $\pi_{i,k}^{\vee} \simeq \pi_{i,j}$ and $d_{i,k} = d_{i,j}$, and it satisfies $l_{i,k} = l_{i,j}$.
As in the discrete case one can consider at each place $v$ of $F$ the localisation $\psi'_{i,v}$, an Arthur-Langlands parameter $\mathrm{WD}_{F_v} \times \mathrm{SL}_2(\C) \rightarrow {}^L \mathbf{H}_i$ which is well-defined up to the action of $\mathrm{Aut}({}^L \mathbf{H}_i)$.
The number $t(\psi')$ is defined as the norm of the imaginary part of the infinitesimal character of ${}^L \xi (\psi')$.
We do not recall the definition of $\sigma(\overline{S}^0_{\psi'})$ because eventually we will not need it.

By \cite[Lemma 24]{Mezo} or \cite[Corollaire 2.8]{WalTTF1}, the transfer correspondence $f_v(g_v)dg_v \leftrightarrow f'_v(h_v)dh_v$ at an Archimedean place $v$ of $F$ is compatible with infinitesimal characters in the following sense.
The L-embedding ${}^L \xi$ yields an algebraic mapping from the complex algebraic variety of infinitesimal characters for $\mathbf{H}_{F_v}$ to the analogue variety for $\mathbf{G}_{F_v}$.
For the cases considered in this paper this is particularly simple since it is not necessary to take a z-extension of $\mathbf{H}$, and ${}^L \xi$ is defined between L-groups formed with Galois groups rather than Weil groups.
Dually we get morphisms of $\C$-algebras $({}^L \xi)^*_v : \mathcal{Z}(\mathfrak{g}_v) \rightarrow \mathcal{Z}(\mathfrak{h}_v)$.
The algebra $\mathcal{Z}(\mathfrak{g}_v)$ acts on the space of bi-$K_v$-finite smooth compactly supported distributions on $\mathbf{G}(F_v)$, and preserves any subspace defined by restricting the $K_v$-types.
The aforementioned compatibly is that if $f'_v(h_v)dh_v$ is a transfer of $f_v(g_v)dg_v$, then for any $X \in \mathcal{Z}(\mathfrak{g}_v)$, $\left(({}^L \xi)^*_v(X) \cdot f'_v\right)(h_v) dh_v$ is a transfer of $(X \cdot f_v)(g_v)dg_v$.
Therefore, for any infinitesimal character $\nu = (\nu_v)_{v | \infty}$ for $\mathbf{G}$ such that $\sum_{v | \infty} || \mathrm{Im}(\nu_v) ||_v = t$, the stabilisation of the trace formula \ref{eqn:STF} can be refined as
$$ I_{\mathrm{disc}, \tilde{\nu}}^{\mathbf{G}}(f(g)dg) = \sum_{\mathfrak{e}} \iota(\mathfrak{e}) \sum_{\psi' \in \widetilde{\Psi}(\mathbf{H}, \tilde{\nu})} m_{\psi'} |\mathcal{S}_{\psi'}|^{-1} \sigma(\overline{S}^0_{\psi'}) \epsilon_{\psi'}(s_{\psi'}) \Lambda_{\psi'}(f'(h)dh) $$
where $\widetilde{\Psi}(\mathbf{H}, \tilde{\nu})$ is the subset of $\widetilde{\Psi}(\mathbf{H})$ consisting of those $\psi'$ such that for any Archimedean place $v$, the infinitesimal character of ${}^L \xi (\psi'_v)$ belongs to the $\mathrm{Out}(\mathbf{G}_{F_v})$-orbit of $\nu_v$, and $I_{\mathrm{disc}, \tilde{\nu}}^{\mathbf{G}}$ is obtained from the linear combination $I_{\mathrm{disc}, t}^{\mathbf{G}}$ of traces in irreducible unitary admissible representations of $\mathbf{G}(\A_F)$ by keeping only those representations whose infinitesimal character belongs to $\tilde{\nu}$.
Of course this compatibility was already used in the proof of the trace formula and in \cite{Arthur}.

From now on we assume that $\nu_v$ is algebraic regular for all $v \in S$.
We claim that for $v \in S$ the Arthur-Langlands parameter ${}^L \xi (\psi'_v)$, whose $\widehat{\mathbf{G}}$-conjugacy class is only defined up to outer conjugacy if $\mathbf{G}$ is even orthogonal, is an Adams-Johnson parameter in the sense of section \ref{sec:AJ}.
We only have to check that the restriction of $\psi'_v$ to $W_{\overline{F_v}} \times \{1\} \subset W_{F_v} \times \mathrm{SL}_2(\C)$ is bounded.
For any $i$ and $j$ the infinitesimal character of the Langlands parameter $W_{F_v} \rightarrow \mathrm{GL}_{n_{i,j}}(\C)$ associated to $(\pi_{i,j})_v$ or $(\pi_{i,j})_v |\det|^{1/2}$ is algebraic, and by Clozel's purity lemma \cite[Lemme 4.9]{ClozelPurity} this implies that $(\pi_{i,j})_v$ is tempered, i.e.\ its Langlands parameter is bounded.

Since $S \neq \emptyset$, this implies that ${}^L \xi (\psi')$ is discrete, and in particular $\psi'$ itself is discrete, which implies that $\sigma(\overline{S}^0_{\psi'}) = 1$.
Moreover, as we recalled above, the contributions from proper Levi subgroups of $\mathbf{G}$ to the discrete part of the spectral side of the trace formula for $\mathbf{G}$ have non-regular infinitesimal character at all Archimedean places of $F$, thus the stabilisation of the trace formula becomes the more concrete equality
$$ \mathrm{tr}\left(f(g)dg \,\middle|\, \mathcal{A}^2(\mathbf{G}, \tilde{\nu})\right) = \sum_{\mathfrak{e}} \iota(\mathfrak{e}) \sum_{\psi' \in \widetilde{\Psi}_{\mathbf{G}-\mathrm{disc}}(\mathbf{H}, \tilde{\nu})} m_{\psi'} |\mathcal{S}_{\psi'}|^{-1} \epsilon_{\psi'}(s_{\psi'}) \Lambda_{\psi'}(f'(h)dh) $$
where $\widetilde{\Psi}_{\mathbf{G}-\mathrm{disc}}(\mathbf{H}, \tilde{\nu}) = \widetilde{\Psi}(\mathbf{H}, \tilde{\nu}) \cap \widetilde{\Psi}_{\mathbf{G}-\mathrm{disc}}(\mathbf{H})$.
The right hand side can also be written as
$$ \sum_{\mathfrak{e}} \iota(\mathfrak{e}) \sum_{\substack{\dot{\psi}' \in \Psi_{\mathbf{G}-\mathrm{disc}}(\mathbf{H}, \tilde{\nu}) \\ \text{up to } \widehat{\mathbf{H}}-\mathrm{conj.}}} |\mathcal{S}_{\psi'}|^{-1} \epsilon_{\psi'}(s_{\psi'}) \Lambda_{\psi'}(f'(h)dh). $$
We now want to express this using discrete parameters for $\mathbf{G}$, using Proposition \ref{prop:endobij}.
Let us write $\dot{\psi} = {}^L \xi \circ \dot{\psi}'$, and recall that $s$ defines an element of $\mathcal{S}_{\dot{\psi}}$.
By \cite[Lemma 4.4.1]{Arthur} we have $ \epsilon_{\psi'}(s_{\psi'}) = \epsilon_{\psi}(ss_{\psi}) $, and by equation \ref{eqn:globalcharrel} we have
$$ \Lambda_{\psi'}(f'(h)dh) = \sum_{\pi \in \Pi_{\psi}(\mathbf{G})} \langle ss_{\psi}, \pi \rangle \mathrm{tr}(\pi(f(g)dg))  $$
The stabiliser of the $\widehat{\mathbf{H}}$-conjugacy class of $\dot{\psi}'$ under the action of $\mathrm{Out}(\mathfrak{e})$ is naturally identified with
$$ \left(\mathrm{Aut}(\mathfrak{e}) \cap C_{\dot{\psi}} \right) / \xi(C_{\dot{\psi}'}) $$
and $\left(\mathrm{Aut}(\mathfrak{e}) \cap C_{\dot{\psi}}\right) / Z(\widehat{\mathbf{G}})^{\mathrm{Gal}_F} = \mathrm{Cent}(s, \mathcal{S}_{\dot{\psi}})$ is simply equal to $\mathcal{S}_{\dot{\psi}}$ because $\mathcal{S}_{\dot{\psi}}$ is abelian.
Thus there are
\begin{equation} \label{eqn:numorbit}
\frac{ |\mathrm{Out}(\mathfrak{e})| \times |\mathcal{S}_{\dot{\psi}'}| \times |\xi(Z(\widehat{\mathbf{H}})^{\mathrm{Gal}_F}) / Z(\widehat{\mathbf{G}})^{\mathrm{Gal}_F}|}{|\mathcal{S}_{\dot{\psi}}|}
\end{equation} 
$\widehat{\mathbf{H}}$-conjugacy classes in $\Psi_{\mathbf{G}-\mathrm{disc}}(\mathbf{H})$ that map to the $\widehat{\mathbf{G}}$-conjugacy class of $(\dot{\psi}, sZ(\widehat{\mathbf{G}}))$.
The groups $\mathbf{G}$ and $\mathbf{H}$ satisfy the Hasse principle, therefore $\tau(\mathbf{G}) = |\pi_0(Z(\widehat{\mathbf{G}})^{\mathrm{Gal}_F})|$ and $\tau(\mathbf{H}) = |\pi_0(Z(\widehat{\mathbf{H}})^{\mathrm{Gal}_F})|$, and finally we can conclude that \ref{eqn:numorbit} equals $ |\mathcal{S}_{\dot{\psi}'}| \iota(\mathfrak{e})^{-1} |\mathcal{S}_{\dot{\psi}}|^{-1}$.
By grouping the terms corresponding to $\mathrm{Out}(\mathfrak{e})$-orbits in the sum, we get
\begin{align*} \mathrm{tr}\left(f(g)dg \,\middle|\, \mathcal{A}^2(\mathbf{G}, \tilde{\nu})\right) & = \sum_{\substack{\dot{\psi} \in \Psi_{\mathrm{disc}}(\mathbf{G}, \tilde{\nu}) \\ \text{up to } \widehat{\mathbf{G}}-\mathrm{conj}} } \, \sum_{s \in \mathcal{S}_{\psi}} \, \sum_{\pi \in \Pi_{\psi}(\mathbf{G})} |\mathcal{S}_{\psi}|^{-1} \epsilon_{\psi}(s s_{\psi}) \langle s s_{\psi}, \pi \rangle \mathrm{tr}(\pi(f(g)dg)) \\
&= \sum_{\substack{\dot{\psi} \in \Psi_{\mathrm{disc}}(\mathbf{G}, \tilde{\nu}) \\ \text{up to } \widehat{\mathbf{G}}-\mathrm{conj}} } \  \sum_{\substack{\pi \in \Pi_{\psi}(\mathbf{G})\\ \mathrm{s.t. } \langle \cdot, \pi \rangle = \epsilon_{\psi}}} \mathrm{tr}(\pi(f(g)dg))
\end{align*}
\end{proof}

\begin{rema}
\begin{enumerate}
\item
We formulated Theorem \ref{theo:main} using traces of distributions on the discrete automorphic spectrum, but of course this is equivalent to a formulation similar to that of Theorem 1.5.2 in \cite{Arthur}.
\item
If $\mathbf{G}$ is such that $\mathbf{G}(\R \otimes_{\Q} F)$ is compact, in which case one may take for $S$ the set of all real places of $F$, Theorem \ref{theo:main} describes the full automorphic spectrum of $\mathbf{G}$.
\end{enumerate}
\end{rema}

\newpage

\bibliographystyle{amsalpha}
\bibliography{cpctmult}

\end{document}